\newcommand{\dxk}{\Delta{x}^k}
\newcommand{\dyk}{\Delta{\lambda}^k}
\newcommand{\dsk}{\Delta{z}^k}
\newcommand{\dXk}{\Delta{X}^k}
\newcommand{\dSk}{\Delta{Z}^k}
\newcommand{\ssz}{\bar \alpha}
\newcommand{\sszk}{\bar \alpha_k}
\newcommand{\dxkm}[1]{\Delta{x}^{k + #1}}
\newcommand{\dykm}[1]{\Delta{\lambda}^{k + #1}}
\newcommand{\dskm}[1]{\Delta{z}^{k + #1}}
\newcommand{\dxkmm}[1]{\Delta{x}^{k - #1}}
\newcommand{\dykmm}[1]{\Delta{\lambda}^{k - #1}}
\newcommand{\dskmm}[1]{\Delta{z}^{k - #1}}
\newcommand{\dXkm}[1]{\Delta{X}^{k + #1}}
\newcommand{\dSkm}[1]{\Delta{Z}^{k + #1}}
\newcommand{\pxysk}[1]{x^{#1}, {\lambda}^{#1}, z^{#1}}
\newcommand{\xysk}[1]{(\pxysk{#1})}
\newcommand{\pdxysk}{\dxk, \dyk, \dsk}
\newcommand{\pdxyskm}[1]{\dxkm{#1}, \dykm{#1}, \dskm{#1}}
\newcommand{\dxysk}{(\pdxysk)}
\newcommand{\dxyskm}[1]{(\pdxyskm{#1})}
\newcommand{\R}{\mathbb{R}}
\newcommand{\hopdm}{\texttt{HOPDM}}
\newcommand{\nN}{\mathcal{N}}
\newcommand{\F}{\mathcal{F}}
\newcommand{\alphadec}{\alpha_\mathrm{dec}}
\DeclareMathOperator{\diag}{diag}
\newtheorem{assumption}{Assumption}
\newtheorem{lemma}{Lemma}
\newtheorem{theorem}{Theorem}
\newtheorem{corollary}{Corollary}
\title{Polynomial worst-case iteration complexity of quasi-Newton \\
  primal-dual interior point algorithms for linear programming}
\author{%
  \href{https://orcid.org/0000-0002-6270-4666}{\includegraphics[scale=0.6]{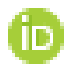}}
  J. Gondzio\thanks{School of Mathematics, University of Edinburgh,
    Edinburgh, EH9 3FD, Scotland, United Kingdom. Email:
    \url{J.Gondzio@ed.ac.uk}} \and%
\href{https://orcid.org/0000-0003-4963-0946}{\includegraphics[scale=0.6]{figure1.eps}}
 F. N. C. Sobral\thanks{{\faEnvelopeO}~Corresponding author. Department of
    Mathematics, State University of Maringá, Avenida Colombo, 5790,
    Paraná, Brazil, 87020-900. Phone: +55 44 30116211. E-mail:
    \url{fncsobral@uem.br}}%
}
\date{Technical Report ERGO, School of Mathematics, September 9, 2022}
\begin{document}

\maketitle

\begin{abstract}
  Quasi-Newton methods are well known techniques for large-scale
  numerical optimization. They use an approximation of the Hessian 
  in optimization problems or the Jacobian in system of nonlinear
  equations. In the Interior Point context, quasi-Newton algorithms
  compute low-rank updates of the matrix associated with the Newton
  systems, instead of computing it from scratch at every iteration. 
  In this work, we show that a simplified quasi-Newton primal-dual
  interior point algorithm for linear programming enjoys polynomial 
  worst-case iteration complexity. 
  Feasible and infeasible cases of the algorithm are considered and 
  the most common neighborhoods of the central path are analyzed. 
  To the best of our knowledge, this is the first attempt to deliver 
  polynomial worst-case iteration complexity bounds for these methods.  
  Unsurprisingly, the worst-case complexity results obtained 
  when quasi-Newton directions are used are worse than their 
  counterparts when Newton directions are employed.  
  However, quasi-Newton updates are very attractive for large-scale 
  optimization problems where the cost of factorizing the matrices 
  is much higher than the cost of solving linear systems. \\
  \noindent
  \textbf{Keywords:} Quasi-Newton methods, Broyden update, Primal-dual
  Interior Point Methods, Polynomial worst-case iteration complexity\\
  \noindent
  \textbf{MSC codes:} 90C05, 90C51, 90C53
\end{abstract}

\section{Introduction} \label{intro}

Let us consider the following general linear programming problem
\begin{equation}
  \label{def:qp}
  \min \quad c^T x, \quad {\mbox {s.t.} } \quad A x = b, \ x \geq 0,
\end{equation}
where $x, c \in \R^n$, $b \in \R^m$ and $A \in \R^{m \times n}$. 
We assume that~\eqref{def:qp} is feasible and the rows of $A$ are
linearly independent.
Define function $F: \R^{2n + m} \to \R^{2n + m}$ by
\begin{equation}
  \label{def:f}
  F(x, \lambda, z) =
  \begin{bmatrix}
    A^T \lambda + z - c \\
    A x - b \\
    X Z e
  \end{bmatrix},
\end{equation}
where $X, Z \in \R^{n \times n}$ are diagonal matrices defined by
$X = \diag(x)$ and $Z = \diag(z)$, respectively, and $e$ is the vector
of ones of appropriate size. First order necessary optimality conditions
for~\eqref{def:qp} state that, if $x^* \ge 0$ is a minimizer, then
there exist $z^* \in \R^n$, $z^* \ge 0$, and $\lambda^* \in \R^m$ such
that $F(\pxysk{*}) = 0$ holds.

Interior point methods (IPMs) try to follow the so-called central-path
of problem~\eqref{def:qp}, defined by the solution of the perturbed 
KKT system
$F(\pxysk{}) = \begin{bmatrix} 0 & 0 & \mu e \end{bmatrix}^T$, as
$\mu \to 0$. Instead of solving such a system exactly, primal-dual IPMs 
apply one iteration of Newton method for a given value of $\mu_k$ at
iteration $k$. In order to calculate this step, the Jacobian of $F$ is
needed
\begin{equation}
  \label{def:j}
  J(x, \lambda, z) = 
  \begin{bmatrix}
    0 & A^T & I \\
    A &   0 & 0 \\
    Z &   0 & X    
  \end{bmatrix} .
\end{equation}
With an iterate $\xysk{k}$ at step $k$, the
classical Newton direction is calculated by solving the following system
\begin{equation}
  \label{def:newtonsys}
    \begin{bmatrix}
    0   & A^T & I \\
    A   & 0   & 0 \\
    Z^k & 0   & X^k \\
  \end{bmatrix}
  \begin{bmatrix}
    \dxk \\ \dyk \\ \dsk
  \end{bmatrix}
  =
  \begin{bmatrix}
    c - z^k - A^T \lambda^k \\ b - A x^k \\  \sigma_k \mu_k e - X^k Z^k e
  \end{bmatrix}
  ,
\end{equation}
where $\mu_k$ is set to be the average complementarity gap ${x^k}^T s^k / n$ 
and $\sigma_k \in (0,1)$ determines its target reduction.

While the coefficient matrix in~\eqref{def:newtonsys} can be
efficiently evaluated and stored, it changes at each iteration.  The
solution of~\eqref{def:newtonsys} is usually accomplished by direct
methods, using suitable matrix
factorizations~\cite{GondzioGrothey2009,LustigEtAl1994}, or by
iterative methods~\cite{DApuzzoElAl2010}, computing preconditioners to
improve their convergence properties. In this paper we are concerned 
with classes of problems for which it is advantageous to approximate
$J(\pxysk{k})$ in order to reduce the cost of
solving~\eqref{def:newtonsys}.

Usually, IPMs do not deal explicitly with the unreduced 
system~\eqref{def:newtonsys}, but rather consider its reduced
form as augmented system (which is symmetric) or as normal equations
(whose coefficient matrix is positive definite)~\cite{Gondzio2012a}. The interest in
working directly with unreduced systems has attracted more attention
in the recent years, since they have good sparsity structure and also
interesting spectral
properties~\cite{Greif2014}. In~\cite{Morini2017}, numerical
experiments comparing preconditioners for unreduced and augmented
systems were made. The appeal for using preconditioners for unreduced
systems is their good conditioning close to the solution.

Gondzio and Sobral~\cite{GondzioSobral2019} considered unreduced
systems in a way similar to~\cite{Morini2017}. They studied 
the Jacobian of $F$ and asked the question whether it is possible 
to approximate it by classical quasi-Newton approaches for 
nonlinear systems. Although this might have seemed an obvious thing 
to attempt, the only previous use of quasi-Newton strategies in IPMs 
was to update the 
preconditioners~\cite{Gratton2011,Bellavia2015,Gratton2016,Bergamaschi2018}. 
Broyden low-rank updates were used and the numerical experiments showed 
that this approach is effective for IPMs when the cost of solving linear
systems is considerably lower than the cost of computing the factorization 
of the Jacobian (or its associated reduced form).

Recently, Ek and Forsgren~\cite{EkForsgren2021b} presented a
theoretical background and numerical experiments regarding a different
kind of low rank updates. The proposed update is based on the
Eckart-Young-Mirsky theorem, rather than on the secant equation
satisfied by the Broyden update, and affects only the ``third row'' 
of matrix $J(\pxysk{k})$, related to the nonlinear part of $F$. Convex
quadratic optimization problems were considered and local convergence 
was established for a simplified primal-dual interior point algorithm, 
but no complexity bound was provided. 
It is worth mentioning that the iteration worst-case complexity 
of $O(\sqrt{n})$ was shown for a short-step primal algorithm 
by Gonzaga~\cite{Gonzaga1989}, where low-rank updates were used 
to compute the projection matrix needed by such type of algorithms. 
Secant equations were also used in~\cite{Dennis1987} for the same 
purpose, but without complexity results.

Polynomial worst-case iteration complexity is a key feature of IPMs
for linear and convex-quadratic problems~\cite{NesNem1994}. It is achieved 
by taking steps in the Newton direction~\eqref{def:newtonsys} such that 
the new iterate belongs to some neighborhood of the central path. 
In case of linear programming, it is well known that the iteration 
worst-case complexity involves polynomials of orders between $\sqrt{n}$ 
and $n^2$, depending on the type of neighborhood of the central path used 
and whether feasible of infeasible iterates are allowed~\cite{Wright1997}. 
Those results have also been generalized to symmetric cone optimization
problems~\cite{Schmieta2003}. 

This work is intended to provide the first steps towards the study of
iteration worst-case complexity of quasi-Newton primal-dual interior
point algorithms. We present non-trivial extensions of well known
complexity results from~\cite{Wright1997} and properties that arise
when Broyden ``bad'' quasi-Newton updates are used. Worst-case
complexity is proven for both feasible and infeasible cases in the
most commonly used neighborhoods. The theoretical study is motivated
by the very promising results from~\cite{GondzioSobral2019} in
quadratic programming problems. As expected, the degrees of polynomials 
in the complexity results are higher than those obtained when steps 
in Newton directions are made.

The paper is organized as follows. In Section~\ref{back} we review 
basic quasi-Newton concepts and the properties of quasi-Newton 
algorithms presented in~\cite{GondzioSobral2019}. 
Then, in Section~\ref{feasible} we analyze the worst-case complexity 
for the feasible case, considering two popular 
neighborhoods of the central path : $\nN_2$      
and $\nN_s$. 
Section~\ref{infeas} is devoted to the infeasible case when 
the iterates are confined to the $\nN_s$ neighborhood. 
Final comments, observations and possible directions of future 
work are discussed in Section~\ref{remarks}.

\paragraph{Notation}
We define $\| \cdot \|$ as the Euclidean norm for vectors and the
induced $\ell_2$-norm for matrices. We will use the short versions
$F_k$ and $J_k$ to describe $F(\pxysk{k})$ and $J(\pxysk{k})$,
respectively. In addition, we will use both inline
  $\xysk{k}$ and matrix
  $\left[\begin{smallmatrix} x^k \\ \lambda^k \\ z^k \end{smallmatrix} \right]$ 
  notations to address vectors in this work.
%

%
\section{Background}
\label{back}

Given a function $G:\R^N \to \R^N$, suppose that we want to solve the
nonlinear system $G(\bar x) = 0$. Secant methods iteratively construct
a linear model $M_k(\bar x)$ of $G$ which interpolates the last two
computed iterates of the method. At each iteration, they need to compute 
an approximation to the Jacobian of $G$, which has to satisfy the secant 
equation
\[
  B s_{k-1} = y_{k - 1},
\]
where $s_{k - 1} = \bar x^k - \bar x^{k - 1}$ and
$y_{k - 1} = G(\bar x^{k}) - G(\bar x^{k - 1})$. There are infinitely
many solutions to the secant equation for $N \ge 2$ and different
approaches generate different secant
methods~\cite{Martinez2000b}. Among them, the Broyden ``bad'' approach
uses the already computed approximation to the inverse of $G$ at
$\bar x^{k - 1}$, called $H_{k - 1}$, to compute the current
approximation $H_k$ as
\begin{equation}
  \label{bbroyd1}
  H_k = H_{k - 1} + \frac{(s_{k - 1} - H_{k - 1} y_{k - 1}) y_{k - 1}^T}{y_{k - 1}^T y_{k - 1}} =
  H_{k - 1} V_{k - 1} + \frac{s_{k - 1} y_{k - 1}^T}{\rho_{k - 1}},
\end{equation}
where
$V_{k - 1} = \left( I - \frac{y_{k - 1} y_{k - 1}^T}{\rho_{k - 1}}
\right)$ and $\rho_{k - 1} = y_{k - 1}^T y_{k - 1}$.
The Broyden ``bad'' update is a rank-1 update where $H_k$ is the
matrix closest to $H_{k - 1}$ in the Frobenius norm which satisfies
the secant equation. After $\ell$ updates of an approximation
$H_{k - \ell}$, current approximation $H_{k}$ is given by
\begin{equation}
  \label{bbroyd2}
  \begin{split}
    H_{k}
    &  = H_{k - 1} V_{k - 1} + \frac{s_{k - 1} y_{k -
        1}^T}{\rho_{k - 1}} \\
    & = H_{k - \ell} \left( \prod_{j = k - \ell}^{k - 1}
      V_{j} \right) + \sum_{i = 1}^{\ell} \left( \frac{s_{ k - i}
        y_{ k - i}^T}{\rho_{ k - i}}
      \prod_{j = k - i + 1}^{k - 1} V_{j} \right).
  \end{split}
\end{equation}

For the specific case of this work, where $G$ is given by $F$ defined
in~\eqref{def:f}, we have that $N = 2n + m$, $\bar x = (\pxysk{})$ and
the vectors $s_{k - 1}$ and $y_{k - 1}$ from the secant equation
assume a more specific description
\begin{equation}
  \label{def:skyk}
    s_{k - 1}
    = \ssz_{k - 1}
    \begin{bmatrix}
      \dxkmm1 \\ \dykmm1 \\ \dskmm1
    \end{bmatrix}\quad \text{and}\quad
    y_{k - 1}
    =
    \begin{bmatrix}
      \ssz_{k - 1} (A^T \dykmm1 + \dskmm1) \\
      \ssz_{k - 1} A \dxkmm1 \\
      X^{k} Z^{k} e - X^{k - 1} Z^{k - 1} e
    \end{bmatrix},
\end{equation}
where $\bar \alpha_{k - 1} \in (0, 1]$ is the step-size taken at iteration
$k - 1$ towards the solution of~\eqref{def:newtonsys}.

In~\cite{GondzioSobral2019}, the authors described an interior point 
method based on low rank quasi-Newton approximations to the Jacobian 
of $F$. The Broyden updates were tested, and the computational 
experience revealed that the most efficient one was the Broyden 
``bad'' update. 

Since we are interested in finding an approximate solution of the linear 
system given by the Newton method (\ref{def:newtonsys}), in the Broyden 
``bad'' approach, given $\ell \ge 0$, the following direction is computed 
\begin{equation}
  \label{def:qnsys}
  \begin{bmatrix}
    \dxk \\ \dyk \\ \dsk
  \end{bmatrix}
  =
  H_k
  \begin{bmatrix}
    c - A^T \lambda^k - z^k \\ 
    b - A x^k \\ 
    \sigma_k \mu_k e - X^k Z^k e
  \end{bmatrix}.
\end{equation}
If $H_{k - \ell} = J_{k - \ell}^{-1}$ and $\ell = 0$,
system~\eqref{def:qnsys} turns out to be
exactly~\eqref{def:newtonsys}. Therefore, in the same way as discussed
in~\cite{GondzioSobral2019}, we assume that the initial
approximation $H_{k - \ell}$ is given by the perfect approximation
$J_{k - \ell}^{-1}$. When $\ell > 0$, the quasi-Newton procedure
strongly uses the fact that the factorization of $J_{k - \ell}$ (or a
good preconditioner) has already been computed. In addition,
Lemma~\ref{l:bbroyd}, taken from~\cite{GondzioSobral2019}, shows that,
with this choice of initial approximation, the Broyden ``bad'' update
has an interesting alternative interpretation.

\begin{lemma}
  \label{l:bbroyd}
  Assume that $k, \ell \ge 0$ and $H_{k}$ is the approximation of
  $J_{k}^{-1}$ constructed by $\ell$ updates~\eqref{bbroyd2} using
  initial approximation $H_{k - \ell} = J_{k - \ell}^{-1}$.  Given
  $v \in \R^{2n + m}$, the computation of $r = H_k v$ is equivalent to
  the solution of
  \begin{equation*}
    \label{l:bbroyd:e1}
    J_{k - \ell} r = v + 
    \begin{bmatrix}
      0 \\ 0 \\ \sum \limits_{i = 1}^{\ell} \gamma_i \left[ \bar
        \alpha_{k - i} \left(Z^{k - \ell} \dxkmm{i} + X^{k - \ell}
          \dskmm{i} \right) - \left(X^{k - i + 1} Z^{k - i + 1} - X^{k
            - i} Z^{k - i}\right) e \right]
    \end{bmatrix},
  \end{equation*}
  where
  $\displaystyle \gamma_i = \frac{y_{k - i}^T \prod_{j = k - i + 1}^{k
      - 1} V_{j}}{\rho_{k - i}} v$, for $i = 1, \dots, \ell$.
\end{lemma}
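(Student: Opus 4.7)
The plan is to expand $r = H_k v$ via the closed form~\eqref{bbroyd2}, multiply through by $J_{k-\ell}$, and then re-arrange so that the linear contributions collapse to $v$, leaving only the stated correction in the third block. The whole argument is an algebraic unwinding; no IPM-specific input is needed beyond reading off the structure of~\eqref{def:j} and~\eqref{def:skyk}.

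First, set $w := \prod_{j=k-\ell}^{k-1} V_j\, v$, so that~\eqref{bbroyd2} applied to $v$ gives $r = H_{k-\ell} w + \sum_{i=1}^{\ell} \gamma_i\, s_{k-i}$. Since $H_{k-\ell} = J_{k-\ell}^{-1}$ by assumption, left-multiplying by $J_{k-\ell}$ yields
\[
  J_{k-\ell} r \;=\; w \;+\; \sum_{i=1}^{\ell} \gamma_i\, J_{k-\ell}\, s_{k-i}.
\]
It thus suffices to show that this right-hand side equals $v$ plus the correction displayed in the lemma.

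Next I would compute $J_{k-\ell} s_{k-i}$ block by block. Using the explicit form of $s_{k-i}$ from~\eqref{def:skyk} together with~\eqref{def:j}, the first two block rows of $J_{k-\ell} s_{k-i}$ are $\bar\alpha_{k-i}(A^T\dykmm{i}+\dskmm{i})$ and $\bar\alpha_{k-i}\, A\dxkmm{i}$, which match exactly the first two block rows of $y_{k-i}$ in~\eqref{def:skyk}; the third row contributes $\bar\alpha_{k-i}(Z^{k-\ell}\dxkmm{i}+X^{k-\ell}\dskmm{i})$, while $y_{k-i}$ carries the true complementarity increment $X^{k-i+1}Z^{k-i+1}e - X^{k-i}Z^{k-i}e$. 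Consequently $J_{k-\ell} s_{k-i} - y_{k-i}$ vanishes in the first two blocks and equals the bracketed quantity of the lemma in the third. This is the structural point: the linear block rows of $F$ are reproduced exactly (with $J_{k-\ell}$ acting like the true Jacobian on those components), whereas the nonlinear complementarity block generates the correction term.

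Finally, it remains to verify $w + \sum_{i=1}^{\ell} \gamma_i\, y_{k-i} = v$. Using $\tfrac{y_{k-i} y_{k-i}^T}{\rho_{k-i}} = I - V_{k-i}$ and the definition of $\gamma_i$, one rewrites
\[
  \gamma_i\, y_{k-i} \;=\; (I-V_{k-i})\, V_{k-i+1}\cdots V_{k-1}\, v \;=\; V_{k-i+1}\cdots V_{k-1}\, v \;-\; V_{k-i} V_{k-i+1}\cdots V_{k-1}\, v,
\]
and summing over $i = 1,\dots,\ell$ telescopes to $v - V_{k-\ell}\cdots V_{k-1}\, v = v - w$. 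Substituting into the previous display cancels $w$ and yields the identity claimed in the lemma. The only real obstacle is bookkeeping: one must keep the left-to-right ordering of the $V_j$ factors in~\eqref{bbroyd2} consistent with the ordering implicit in the definition of $\gamma_i$, so that the telescoping aligns correctly. Everything else is direct substitution.
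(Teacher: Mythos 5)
Your proof is correct. Note that the paper does not reprove this lemma---it is quoted verbatim from \cite{GondzioSobral2019}---but your argument is the natural one: expanding $H_k v$ via~\eqref{bbroyd2}, using $H_{k-\ell}=J_{k-\ell}^{-1}$, splitting $J_{k-\ell}s_{k-i}$ into $y_{k-i}$ plus a third-block discrepancy, and telescoping $\sum_i \gamma_i y_{k-i} = v - w$ via $y_{k-i}y_{k-i}^T/\rho_{k-i}=I-V_{k-i}$; all blocks and orderings check out against~\eqref{def:j}, \eqref{def:skyk} and the left-to-right convention in~\eqref{bbroyd2}.
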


Lemma~\ref{l:bbroyd} is the basis of the analysis developed in this
work. It states that we can study quasi-Newton steps using the
Jacobian of the Newton step. The only difference is the right-hand
side. Using this property of the Broyden ``bad'' update we are 
able to extend the well known complexity results described
in~\cite{Wright1997}. The difficulty in the analysis will be mostly 
caused by the extra term, added to the usual right-hand side 
of~\eqref{def:newtonsys}. It is important to note that
Lemma~\ref{l:bbroyd} does not assume that the iterates are feasible,
hence it is useful in both feasible and infeasible cases. Although
by~\eqref{bbroyd2}, the sparsity structure of the third row of $B_k$ 
(the inverse of $H_k$) is lost when $\ell \ge 1$, we can see that 
the structural sparsity of $J_{k - \ell}$ can still be used to solve 
the linear systems.

Let us define a skeleton primal-dual quasi-Newton interior point algorithm. 
It is given by Algorithm~\ref{alg:pdqnipm} and generates a sequence 
of alternating Newton and quasi-Newton steps. Clearly, by the nature 
of update~\eqref{bbroyd1}, the first step needs to be a Newton step.

\begin{algorithm}

  \begin{algorithmic}[0]

    \State \textbf{Input:} $F$, $J$ and $\xysk{0}$
    \For{$k = 0, 1, \dots$}
    \If{$k$ is odd}
      \State $\ell \gets 1$
      \Comment{Quasi-Newton iteration}
    \Else
      \State $\ell \gets 0$
      \Comment{Newton iteration}
    \EndIf

    \State Calculate $\dxysk$ by
    solving~\eqref{def:qnsys}\;

    \State Calculate
    \[
      \xysk{k + 1} = \xysk{k} + \bar \alpha_k \dxysk
    \]
    
    \State for a suitable choice of $\bar \alpha_k \in [0, 1]$, such that
    $x^{k + 1}, z^{k + 1} > 0$

    \EndFor
  \end{algorithmic}







    


  




  \caption{Conceptual Quasi-Newton Interior Point algorithm.}
  \label{alg:pdqnipm}
\end{algorithm}
 
Let us analyze what happens when a sequence of two steps 
is performed: at iteration $k$ the Newton step is made 
(with stepsize $\bar \alpha_k$)
and then at iteration $k+1$ the quasi-Newton step is taken 
(with stepsize $\bar \alpha$). 
For Newton step at iteration $k$, we observe that
\begin{equation}
  \label{CProds-kp1}
  \begin{split}
    X^{k + 1} Z^{k + 1} e & = \left(X^k + \bar \alpha_k \dXk\right)
    \left(Z^k + \bar \alpha_k \dSk\right) e \\
    & = X^k Z^k e + \bar \alpha_k \left( Z^k \dxk + X^k \dsk \right) 
        + {\bar \alpha_k}^2 \dXk \dSk e \\
    & = X^k Z^k e + \bar \alpha_k ( \sigma_k \mu_k e - X^k Z^k e )
        + {\bar \alpha_k}^2 \dXk \dSk e \\
    & = (1 - \bar \alpha_k) X^k Z^k e + \bar \alpha_k \sigma_k \mu_k e
        + {\bar \alpha_k}^2 \dXk \dSk e.
  \end{split}
\end{equation}
Later, in the proofs of several technical results, we will need 
to analyze the error produced when the quasi-Newton direction
$\dxyskm1$ 
is multiplied by $J_{k + 1}$:  
  \[
    \begin{bmatrix}
      0 & A^T & I \\
      A & 0   & 0 \\
      Z^{k + 1} & 0   & X^{k + 1} \\
    \end{bmatrix}
    \begin{bmatrix}
      \dxkm1 \\ \dykm1 \\ \dskm1
    \end{bmatrix}
    =
    \left(
      \begin{bmatrix}
        0 & 0 & 0 \\
        0 & 0   & 0 \\
        Z^{k + 1} - Z^k & 0   & X^{k + 1} - X^k \\
      \end{bmatrix}
      +
      \begin{bmatrix}
        0 & A^T & I \\
        A & 0   & 0 \\
        Z^{k} & 0   & X^{k} \\
      \end{bmatrix}
    \right)      
    \begin{bmatrix}
      \dxkm1 \\ \dykm1 \\ \dskm1
    \end{bmatrix}.
  \]
  Applying Lemma~\ref{l:bbroyd} for iteration $k + 1$ with $\ell = 1$
  and then observing that $\dxysk$ solves the Newton
  system~\eqref{def:newtonsys} and using~\eqref{CProds-kp1}, the third block
  equation in Lemma~\ref{l:bbroyd} gives
  \begin{equation}
    \label{useful-result} 
    \begin{split}
      Z^k \dxkm1 + X^k \dskm1
    & = \sigma_{k + 1} \mu_{k + 1} e - X^{k + 1} Z^{k + 1} e 
        + \gamma_1 \left[ \sszk \left(Z^{k} \dxk + X^{k} \dsk \right) 
        - \left(X^{k + 1} Z^{k + 1} - X^{k} Z^{k}\right) e \right] \\
    & = \sigma_{k + 1} \mu_{k + 1} e - X^{k + 1} Z^{k + 1} e 
        + \gamma_1 (\bar \alpha_k \sigma_k \mu_k e
        + (1 - \bar \alpha_k) X^k Z^k e - X^{k + 1} Z^{k + 1} e) \\
    & = \sigma_{k + 1} \mu_{k + 1} e - X^{k + 1} Z^{k + 1} e 
        - \gamma_1 {\bar \alpha_k}^2 \dXk \dSk e.
    \end{split}
  \end{equation}
  Hence, using~\eqref{def:skyk} and~\eqref{useful-result}
  \begin{equation}
    \label{errorZdXpXdZ}
    \begin{split}
      Z^{k + 1} \dxkm1 & + X^{k + 1} \dskm1 = (Z^{k + 1} - Z^k) \dxkm1
      + (X^{k + 1} - X^k) \dskm1 + Z^k \dxkm1 + X^k \dskm1 \\
      & = \sszk \left(\dSk \dXkm1 e + \dXk \dSkm1 e\right) + \sigma_{k + 1}
      \mu_{k + 1} e - X^{k + 1} Z^{k + 1} e - \gamma_1 \sszk^2 \dXk \dSk e,
    \end{split}
  \end{equation}
  where $\dSk$, $\dXk$, $\dSkm1$ and $\dXkm1$ are given by
  $\diag(\dsk)$, $\diag(\dxk)$, $\diag(\dskm1)$ and $\diag(\dxkm1)$,
  respectively.
Next we compute the new complementarity products obtained after 
a sequence of two steps, apply (\ref{errorZdXpXdZ}), and add and 
subtract the term $\bar \alpha_k^2 \dXk \dSk e$ to derive 
\begin{equation}
  \label{NewComplProds}
    \begin{split}
      X^{k + 2} Z^{k + 2} e & = 
        (X^{k+1} + \bar \alpha \dXkm1) (Z^{k+1} + \bar \alpha \dSkm1) e \\
      & = X^{k+1} Z^{k+1} e 
        + \bar \alpha (Z^{k+1} \dXkm1 + X^{k + 1} \dSkm1) e 
        + \bar \alpha^2 \dXkm1 \dSkm1 e \\
      & = X^{k+1} Z^{k+1} e 
        + \bar \alpha \bar \alpha_k ( \dSk \dXkm1 e + \dXk \dSkm1 e ) 
        + \bar \alpha^2 \dXkm1 \dSkm1 e 
        + \bar \alpha_k^2 \dXk \dSk e \\
      & \quad - \bar \alpha_k^2 \dXk \dSk e 
        + \bar \alpha \sigma_{k + 1} \mu_{k + 1} e - \bar \alpha X^{k + 1} Z^{k + 1} e 
        - \gamma_1 \bar \alpha {\bar \alpha_k}^2 \dXk \dSk e \\
      & = (1 - \bar \alpha) X^{k+1} Z^{k+1} e 
        + (\bar \alpha_k \dXk + \bar \alpha \dXkm1) (\bar \alpha_k \dSk + \bar \alpha \dSkm1) e \\
      & \quad + \bar \alpha \sigma_{k + 1} \mu_{k + 1} e 
        - (1 + \bar \alpha \gamma_1) {\bar \alpha_k}^2 \dXk \dSk e. 
    \end{split} 
\end{equation}
By multiplying both sides of equation~\eqref{NewComplProds} 
with $e^T$ we get the complementarity product at iteration $k+2$: 
\begin{equation} 
  \label{CP-afterQN}
  \begin{split}
    (x^{k + 1} & + \ssz \dxkm1)^T (z^{k + 1} + \ssz \dskm1) = \\
    & = (1 - \ssz (1 - \sigma_{k + 1})) {x^{k + 1}}^T z^{k
      + 1} + (\sszk \dxk + \ssz \dxkm1)^T (\sszk \dsk + \ssz \dskm1)
    - (1 + \gamma_1 \ssz) \sszk^2 {\dxk}^T \dsk. 
  \end{split}
\end{equation}
It is worth noting that the final expression in (\ref{NewComplProds})
involves a composite direction
$(\bar \alpha_k \dxk + \bar \alpha \dxkm1, \bar \alpha_k \dsk + \bar
\alpha \dskm1)$ which corresponds to an aggregate of two consecutive
steps: in Newton direction at iteration $k$ and in quasi-Newton
direction at iteration $k+1$. Much of the effort of the analysis
presented in this paper is focused on this composite
direction. 
Let us mention that we will 
also use the component-wise versions of equations (\ref{CProds-kp1}), 
(\ref{errorZdXpXdZ}) and (\ref{NewComplProds}). 
For example, in case of (\ref{NewComplProds}) this gives 
\begin{equation}
  \label{singleProd}
  \begin{split}
    \left[ x^{k + 1} \right. & + \left. \bar \alpha \dxkm1 \right]_i
    \left[ z^{k + 1} + \bar \alpha \dskm1 \right]_i 
      = (1 - \bar \alpha) \left( x^{k + 1}_i z^{k + 1}_i \right) \\
    & + \left[ \bar \alpha_k \dxk + \bar \alpha \dxkm1 \right]_i
        \left[ \bar \alpha_k \dsk + \bar \alpha \dskm1 \right]_i 
      + \bar \alpha \sigma_{k + 1} \mu_{k + 1} 
      - (1 + \bar \alpha \gamma_1) {\bar \alpha_k}^2 [\dxk]_i [\dsk]_i.
  \end{split}
\end{equation}

Observe that equations (\ref{CProds-kp1})-(\ref{singleProd}) 
are valid for both feasible and infeasible algorithm. However, the analysis 
in Sections~\ref{feasible} and~\ref{infeas} will distinguish between 
these two cases because in the feasible one we are able to take advantage 
of the orthogonality of primal and dual directions and exploit it 
to deliver better final worst-case complexity results. 

Before we conclude the brief background section and take the reader 
through a detailed analysis of different versions of the primal-dual 
quasi-Newton interior point algorithm, let us observe that 
equation~\eqref{NewComplProds} involves an important term $\gamma_1$.
By Lemma~\ref{l:bbroyd}, $\gamma_1$ can be seen as the scalar 
coefficient of the projection of vector $v$ onto the subspace 
generated by vector $y_k$:
\[
  \mathcal{P}_{y_k}(v) = \frac{y_k^T v}{y_k^T y_k} y_k = \gamma_1 y_k. 
\]
Using the non-expansive property of projections we conclude that
\begin{equation}
  \label{projection}
  \| \mathcal{P}_{y_k}(v) \| \le \|v\| 
  \iff \| \gamma_1 y_k \| \le \| v \| 
  \iff |\gamma_1| \le \frac{\|v\|}{\|y_k\|}.
\end{equation}
In the next lemma, a lower bound for $\|y_k\|$ is derived. It states
that the denominator of~\eqref{projection} can be bounded away from
zero if a sufficient decrease of $\mu = x^T z / n$ is ensured and
non-null step-sizes are taken. The bound for $\|v\|$ involves the
right-hand side in~\eqref{def:qnsys} and therefore depends on the
feasibility of iterates and on the choice of the centering parameter
$\sigma$.

\begin{lemma}
  \label{l:ykbnd}
  Let $k + 1$ be a quasi-Newton iteration of
  Algorithm~\ref{alg:pdqnipm} and $y_k$ be the quasi-Newton vector
  defined by~\eqref{def:skyk} to construct $H_{k + 1}$ by the Broyden
  ``bad'' update~\eqref{bbroyd2}. Suppose that
  $\mu_{k + 1} \le (1 - \rho_k \sszk) \mu_k$
  holds, for $\sszk, \rho_k \in [0, 1]$. Then
  \[
    \|y_k\| \ge \frac{\rho_k}{2} \sszk \mu_k.
  \]
\end{lemma}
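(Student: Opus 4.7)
The plan is to bound $\|y_k\|$ from below by discarding the first two blocks of $y_k$ and focusing only on the third (complementarity) block. From~\eqref{def:skyk} applied at index $k$, that block is exactly $X^{k+1}Z^{k+1}e - X^k Z^k e$, so I would start from
\[
  \|y_k\| \;\ge\; \bigl\| X^{k+1}Z^{k+1}e - X^k Z^k e \bigr\|,
\]
which sidesteps any need to handle the residual contributions $\sszk(A^T\dyk+\dsk)$ and $\sszk A\dxk$ in the first two blocks.

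The next step is to observe that the coordinate sum of this third block has a clean expression in terms of $\mu_k$ and $\mu_{k+1}$:
\[
  e^T\!\bigl(X^{k+1}Z^{k+1}e - X^k Z^k e\bigr) \;=\; n\mu_{k+1} - n\mu_k.
\]
The hypothesis $\mu_{k+1}\le(1-\rho_k\sszk)\mu_k$, together with $\rho_k,\sszk\in[0,1]$, forces $\mu_{k+1}\le\mu_k$, so the absolute value of this sum simplifies to $n(\mu_k-\mu_{k+1})$, which by the same hypothesis is at least $n\rho_k\sszk\mu_k$.

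Finally, I would convert this bound on the coordinate sum into a Euclidean-norm bound by Cauchy--Schwarz in the form $|e^Tv|\le\sqrt{n}\,\|v\|$, i.e.\ $\|v\|\ge|e^Tv|/\sqrt{n}$. Applied to the third block this yields $\|X^{k+1}Z^{k+1}e - X^k Z^k e\|\ge \sqrt{n}\,\rho_k\sszk\mu_k$, and since $\sqrt{n}\ge 1\ge \tfrac{1}{2}$ for any $n\ge 1$, the claimed estimate $\|y_k\|\ge\tfrac{\rho_k}{2}\sszk\mu_k$ follows immediately (in fact with a comfortable slack).

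There is no real obstacle here: the argument is essentially a one-line calculation. The only care needed is to track signs so that $|n(\mu_{k+1}-\mu_k)| = n(\mu_k-\mu_{k+1})$, and to note that the constant $1/2$ in the statement is a convenient round figure rather than the tightest attainable one, which leaves room to use the lemma flexibly in the complexity arguments that follow.
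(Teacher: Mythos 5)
Your proof is correct and rests on the same idea as the paper's: drop the first two blocks of $y_k$, observe that the third block is $X^{k+1}Z^{k+1}e - X^kZ^ke$, and use the hypothesis to show its norm cannot be small because the average complementarity drops by at least $\rho_k\sszk\mu_k$. The paper phrases this as a proof by contradiction using the componentwise bound $|v_i|\le\|v\|$ rather than your direct Cauchy--Schwarz estimate (which in fact yields the sharper bound $\|y_k\|\ge\sqrt{n}\,\rho_k\sszk\mu_k$), but the substance is identical.
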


\begin{proof}
  If $\sszk = 0$ or $\rho_k = 0$ the result trivially holds, so we can
  assume that $\sszk, \rho_k \in (0, 1]$. Suppose, by contradiction,
  that $\| y_k \| < \rho_k \sszk \mu_k / 2$. Therefore, by definition
  of $y_k$,
  \begin{equation*}
    \begin{split}
      \| X^{k + 1} Z^{k + 1} e - X^k Z^k e \| \le \| y_k \| 
        < \frac{\rho_k \sszk}{2} \mu_k 
      & \Rightarrow |x^{k + 1}_i z^{k + 1}_i - x^k_i z^k_i| 
        < \frac{\rho_k \sszk}{2} \mu_k, i = 1, \dots, n \\
      & \Rightarrow x^{k + 1}_i z^{k + 1}_i - x^k_i z^k_i 
        > - \frac{\rho_k \sszk}{2} \mu_k, i = 1, \dots, n \\
      & \Rightarrow \mu_{k + 1} - \mu_k 
        > - \frac{\rho_k \sszk}{2} \mu_k,
    \end{split}
  \end{equation*}
  where the last result was obtained by adding up all the $n$ previous
  inequalities and dividing by $n$. By hypothesis we have that
  $\mu_{k + 1} \le (1 - \rho_k \sszk) \mu_k$ and, therefore,
  \[
    - \frac{\rho_k \sszk}{2} \mu_k 
    < \mu_{k + 1} - \mu_k \le - \rho_k \sszk \mu_k, 
  \]
  which implies $\rho_k/2 > \rho_k$ and is a clear absurd.
  Thus, we conclude that
  $\|y_k\| \ge \rho_k \sszk \mu_k / 2$.
\end{proof}

\section{Worst-case complexity in the feasible case}
\label{feasible}

For all the results in this section, we suppose that $\xysk{0}$ is
primal and dual feasible, given by Assumption~\ref{a:pdfeas}.

\begin{assumption}
  \label{a:pdfeas}
  $\xysk{0} \in \mathcal{F} \doteq \{ (x, \lambda, z)\ |\ A x = b, A^T
  \lambda + z = c, x > 0, z > 0\}$.
\end{assumption}

Our analysis follows closely the theory in~\cite{Wright1997}.
Under Assumption~\ref{a:pdfeas} the primal and dual directions 
are orthogonal to each other~\cite[Lemma 5.1]{Wright1997}. 
We show that the same holds for quasi-Newton directions. 

\begin{lemma}
  \label{l:ortho}
  If $k+1$ is a quasi-Newton iteration of Algorithm~\ref{alg:pdqnipm},
  then
  $
  {\dxkm1}^T \dskm1 = 0.
  $
\end{lemma}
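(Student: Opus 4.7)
The plan is to invoke Lemma~\ref{l:bbroyd} to reduce the quasi-Newton system at iteration $k+1$ to a Newton-like system with a modified right-hand side, and then observe that the modification only affects the third block of equations, so the first two blocks retain exactly the linear structure that forces orthogonality in the standard Newton case.

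More concretely, I would first argue by induction that feasibility propagates. Under Assumption~\ref{a:pdfeas}, $(x^0,\lambda^0,z^0)\in\F$. At any Newton iteration $k$ (even index), the first two block rows of~\eqref{def:newtonsys} give $A\dxk=b-Ax^k=0$ and $A^T\dyk+\dsk=c-A^T\lambda^k-z^k=0$, so the updated iterate $\xysk{k+1}$ remains in $\F$. For the quasi-Newton iteration $k+1$, applying Lemma~\ref{l:bbroyd} with $\ell=1$ tells us that $\dxyskm1$ solves
\[
  J_k \begin{bmatrix} \dxkm1 \\ \dykm1 \\ \dskm1 \end{bmatrix}
  = \begin{bmatrix} c - A^T\lambda^{k+1} - z^{k+1} \\ b - A x^{k+1} \\ \sigma_{k+1}\mu_{k+1}e - X^{k+1}Z^{k+1}e \end{bmatrix}
  + \begin{bmatrix} 0 \\ 0 \\ * \end{bmatrix},
\]
where $*$ is the extra third-block term described in Lemma~\ref{l:bbroyd}. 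Since $\xysk{k+1}\in\F$ by the preceding induction step, the right-hand side of the first two blocks vanishes.

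Reading off the first two rows of $J_k$, I would then conclude
\[
  A^T\dykm1 + \dskm1 = 0 \qquad \text{and} \qquad A\,\dxkm1 = 0.
\]
Left-multiplying the first identity by $({\dxkm1})^T$ and substituting $(\dxkm1)^T A^T = (A\,\dxkm1)^T = 0$ from the second identity yields
\[
  (\dxkm1)^T\dskm1 \;=\; -(\dxkm1)^T A^T \dykm1 \;=\; -(A\,\dxkm1)^T \dykm1 \;=\; 0,
\]
which is the claim.

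The only real subtlety is the feasibility propagation step: one must be sure that although Lemma~\ref{l:bbroyd} modifies the third block of the right-hand side, the first two blocks remain the usual primal/dual residuals, so that feasibility of $\xysk{k+1}$ translates directly into a homogeneous system for the quasi-Newton direction. Once that is observed, the conclusion reduces to the same one-line calculation as in the standard Newton setting of~\cite{Wright1997}, and no estimate involving the Broyden correction term $\gamma_1$ or the step-sizes $\bar\alpha_k,\bar\alpha$ is required.
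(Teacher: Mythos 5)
Your proposal is correct and follows essentially the same route as the paper: apply Lemma~\ref{l:bbroyd} to observe that the Broyden correction touches only the third block, use primal--dual feasibility (propagated from $\xysk{0}$) to make the first two right-hand-side blocks vanish, and conclude with the standard one-line orthogonality computation ${\dxkm1}^T \dskm1 = -(A\,\dxkm1)^T \dykm1 = 0$. The extra care you take in spelling out the feasibility induction is implicit in the paper's appeal to Assumption~\ref{a:pdfeas}, but it is the same argument.
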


\begin{proof}
  Using Lemma~\ref{l:bbroyd} with $r=Hv$ defined by their
  respective terms in~\eqref{def:qnsys} and by the primal and dual
  feasibility of $\xysk0$, we observe that the first two block rows 
  of system~\eqref{def:qnsys} (at iteration $k+1$) given by
  \[
  \left\{
  \begin{array}{rrcrcr}
    & A^T \dykm1 & + & \dskm1 & = & 0 \\
    A \dxkm1 & & & & = & 0 
  \end{array}
  \right.
  \]
  are the same as in the system solved by the usual Newton step. 
  Therefore,
  \[
  {\dxkm1}^T \dskm1 = 
     - {\dxkm1}^T A^T \dykm1 = - \left( A \dxkm1 \right)^T \dykm1 = 0.
  \]
\end{proof}

\begin{corollary}
  \label{c:ortho}
  If $k$ is a Newton iteration and $k+1$ is a quasi-Newton iteration, then
  \[
  \left( \bar \alpha_k \dxk + \bar \alpha \dxkm1 \right)^T 
  \left( \bar \alpha_k \dsk + \bar \alpha \dskm1 \right) =
  {\dxk}^T \dsk = {\dxk}^T \dskm1 = {\dsk}^T \dxkm1 = 0.
  \]
\end{corollary}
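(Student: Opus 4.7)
The plan is to reduce the displayed identity to four pairwise orthogonality relations and then expand the bilinear form on the left. Two of them are already in hand: $(\dxk)^T \dsk = 0$ is the classical Newton orthogonality (cf.~\cite[Lemma~5.1]{Wright1997}), and $(\dxkm1)^T \dskm1 = 0$ is precisely Lemma~\ref{l:ortho}. What remains is to show that the two cross terms $(\dxk)^T \dskm1$ and $(\dsk)^T \dxkm1$ also vanish; then the corollary follows by expanding
\[
(\bar\alpha_k \dxk + \bar\alpha \dxkm1)^T (\bar\alpha_k \dsk + \bar\alpha \dskm1) = \bar\alpha_k^2 (\dxk)^T \dsk + \bar\alpha_k \bar\alpha \bigl[(\dxk)^T \dskm1 + (\dxkm1)^T \dsk\bigr] + \bar\alpha^2 (\dxkm1)^T \dskm1,
\]
and noting that every contributing inner product is zero.

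The key structural observation for the cross terms is the one already exploited in Lemma~\ref{l:ortho}: under Assumption~\ref{a:pdfeas} the primal/dual residuals remain zero along the sequence, so the first two block rows of both the Newton system at iteration $k$ and the quasi-Newton system at iteration $k+1$ have zero right-hand side. For the Newton step this is immediate from~\eqref{def:newtonsys}, while for the quasi-Newton step it follows from Lemma~\ref{l:bbroyd}, since the correction added to the right-hand side of~\eqref{def:qnsys} lives entirely in the third block. Hence $A\dxk = 0$, $\dsk = -A^T \dyk$, $A\dxkm1 = 0$, and $\dskm1 = -A^T \dykm1$. Substituting then gives $(\dxk)^T \dskm1 = -(A\dxk)^T \dykm1 = 0$ and, symmetrically, $(\dsk)^T \dxkm1 = -(A\dxkm1)^T \dyk = 0$, exactly as in the proof of Lemma~\ref{l:ortho}.

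The only place where one must be slightly careful is the feasibility bookkeeping: both $\xysk{k}$ and $\xysk{k+1}$ must lie in $\F$ for the argument above to apply. This is the step I expect to be the main (though minor) obstacle, and it is handled by a straightforward induction starting from Assumption~\ref{a:pdfeas}, using the fact that neither a Newton step nor a quasi-Newton step disturbs the primal or dual residuals, because in each case the step satisfies $A\Delta x = 0$ and $A^T \Delta\lambda + \Delta z = 0$ by the discussion above. Once this is established, the corollary is a direct consequence of the four orthogonality identities and the expansion displayed above.
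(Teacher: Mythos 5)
Your proposal is correct and follows essentially the same route as the paper: obtain ${\dxk}^T\dsk=0$ from the Newton system, ${\dxkm1}^T\dskm1=0$ from Lemma~\ref{l:ortho}, kill the cross terms via $A\dxk=A\dxkm1=0$ and $\dsk=-A^T\dyk$, $\dskm1=-A^T\dykm1$ (using Lemma~\ref{l:bbroyd} to see that the first two block equations are unchanged at the quasi-Newton iteration), and expand the bilinear form. The feasibility bookkeeping you flag is the same induction implicit in the paper's argument, so no gap remains.
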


\begin{proof}
  Since, by equation~\eqref{def:skyk},
  $\dxysk$ was computed by
  the Newton step at iteration $k$ we have that ${\dxk}^T \dsk = 0$ by
  the same arguments of Lemma~\ref{l:ortho}. By Lemma~\ref{l:bbroyd}
  the first two equations do not change between iterations $k$ and
  $k + 1$, and we have the desired results.
\end{proof}

The second important result is that the quasi-Newton step can decrease
the barrier parameter $\mu$ in exactly the same way as the Newton step
(see~\cite[Lemma~5.1]{Wright1997}). Recall that by definition $\mu = (x^T z) / n$. 

\begin{lemma}
  \label{l:decmu3}
  Let $k + 1$ be a quasi-Newton iteration of Algorithm~\ref{alg:pdqnipm}. 
  Then for any feasible step-size $\bar \alpha \in [0, 1]$
  \begin{equation}
    \label{t:decmu3}
    \mu(\bar\alpha) = (1 - \bar\alpha (1 - \sigma_{k + 1})) \mu_{k + 1}. 
  \end{equation}
\end{lemma}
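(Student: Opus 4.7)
The plan is to derive this identity by leveraging equation~\eqref{CP-afterQN}, which was already established in the background section, and then collapsing the extra terms using the orthogonality results available in the feasible setting.

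First I would write
\[
  \mu(\bar\alpha) = \frac{1}{n}\,(x^{k+1} + \bar\alpha \dxkm1)^T (z^{k+1} + \bar\alpha \dskm1),
\]
and apply equation~\eqref{CP-afterQN} to expand the right-hand side into three pieces: the scaled complementarity gap $(1 - \bar\alpha(1 - \sigma_{k+1}))\,{x^{k+1}}^T z^{k+1}$, the composite-direction inner product $(\sszk \dxk + \ssz \dxkm1)^T (\sszk \dsk + \ssz \dskm1)$, and the correction term $-(1 + \gamma_1 \ssz)\sszk^2\,{\dxk}^T \dsk$.

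Next I would invoke Corollary~\ref{c:ortho} to eliminate the last two pieces in one shot: under Assumption~\ref{a:pdfeas}, the corollary gives
\[
  (\sszk \dxk + \ssz \dxkm1)^T (\sszk \dsk + \ssz \dskm1) = 0
  \quad \text{and} \quad
  {\dxk}^T \dsk = 0.
\]
This reduces the expansion to $(1 - \bar\alpha(1 - \sigma_{k+1}))\,{x^{k+1}}^T z^{k+1}$, and dividing by $n$ and using $\mu_{k+1} = {x^{k+1}}^T z^{k+1}/n$ yields exactly~\eqref{t:decmu3}.

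I do not anticipate any real obstacle: all the heavy lifting has been done in deriving \eqref{CP-afterQN} (which already accounts for the extra quasi-Newton error term through $\gamma_1$), and the orthogonality of directions in the feasible case, established in Lemma~\ref{l:ortho} and Corollary~\ref{c:ortho}, is precisely what kills every term that would otherwise distinguish the quasi-Newton step from a Newton step. The only small point to be careful about is to confirm that Corollary~\ref{c:ortho} applies here, which it does because Algorithm~\ref{alg:pdqnipm} alternates Newton and quasi-Newton steps so that a quasi-Newton iteration $k+1$ is always preceded by a Newton iteration $k$.
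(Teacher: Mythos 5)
Your proposal is correct and follows essentially the same route as the paper: both expand $(x^{k+1}+\bar\alpha\,\dxkm1)^T(z^{k+1}+\bar\alpha\,\dskm1)$ via equation~\eqref{CP-afterQN} and then use Corollary~\ref{c:ortho} to annihilate the composite-direction inner product and the ${\dxk}^T\dsk$ correction term, leaving $(1-\bar\alpha(1-\sigma_{k+1}))\,{x^{k+1}}^T z^{k+1}$. Your closing remark about the alternating Newton/quasi-Newton structure justifying the use of Corollary~\ref{c:ortho} is a correct and sensible check.
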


\begin{proof}
By equation~(\ref{CP-afterQN}) and using Corollary~\ref{c:ortho}, we obtain 
  \begin{equation*}
    \begin{split}
      n \mu (\bar\alpha) & = (x^{k + 2})^T z^{k + 2} = 
        (x^{k + 1} + \bar\alpha \dxkm1)^T (z^{k + 1} + \bar\alpha \dskm1) \\
      & = (1 - \bar \alpha) {x^{k + 1}}^T z^{k + 1} 
        + (\bar \alpha_k \dxk \! + \! \bar \alpha \dxkm1)^T 
          (\bar \alpha_k \dsk \! + \! \bar \alpha \dskm1) 
        + \bar\alpha \sigma_{k + 1} n \mu_{k + 1} 
        - (1 \! + \! \bar\alpha \gamma_1) {\bar \alpha_k}^2 {\dxk}^T \dsk \\
      & = (1 - \bar\alpha (1 - \sigma_{k + 1})) n \mu_{k + 1}. 
    \end{split}
  \end{equation*}
  By dividing both sides of the last equation by $n$ we get the desired result. 
\end{proof}

Then, using Lemma~\ref{l:decmu3}, after Newton step at iteration $k$ 
and quasi-Newton step at iteration $k + 1$, we get 
\begin{equation}
  \label{l:qnstepbnd:e4}
  \begin{split}
    \| (\mu(\ssz) - \mu_k) e \|^2 
    & = (\mu(\ssz) - \mu_k)^2 n 
      \ = \ [(1 - \ssz (1 - \sigma_{k + 1})) \mu_{k + 1} - \mu_k]^2 n \\
    & = [(1 - \ssz (1 - \sigma_{k + 1})) (1 - \sszk (1 - \sigma_k)) \mu_k - \mu_k]^2 n \\
    & = [1 - (1 - \ssz (1 - \sigma_{k + 1})) (1 - \sszk (1 - \sigma_k))]^2 \mu_k^2 n. 
  \end{split}
\end{equation}
%
%
%
%
It is worth noting that (in the feasible case) the term $\gamma_1$ 
originating from Lemma~\ref{l:bbroyd} 
does not have any influence on the value of $\mu(\bar \alpha)$. 

\subsection{The $\nN_2$ neighborhood}
\label{n2}

In this section we will consider a short-step interior point method 
and employ the notion of $\nN_2$ neighborhood of the central path
\[
  \nN_2(\theta) = \{(x, \lambda, z) \in \mathcal{F} \ | 
                    \ \|XZe - \mu e\| \le \theta \mu\},
\]
where $\mathcal{F}$ is the set of primal and dual feasible points 
such that $x, z > 0$, see Assumption~\ref{a:pdfeas}. 
For all considerations in this subsection we add the following assumption.

\begin{assumption}
  \label{a:n2}
  $\xysk{k} \in \nN_2(\theta_k)$ \ and \ 
  $\xysk{k + 1} \in \nN_2(\theta_{k + 1})$, \ for \ 
  $\theta_k, \theta_{k + 1} \in (0, 1)$.
\end{assumption}

Our main goal is to show that the new iterate
\[
  \xysk{k + 2} = \xysk{k + 1} + \bar \alpha \dxyskm1
\]
also belongs to $\nN_2(\theta_{k + 2})$, for suitable choices of
$\theta_{k + 2} \in (0, 1)$, $\bar \alpha$ and $\bar \alpha_k$.
Therefore, we are interested in the analysis of the Euclidean 
norm of the vector
$\, (X^{k + 1} + \bar \alpha \dXkm1) (Z^{k + 1} + \bar \alpha \dSkm1)e - \mu(\bar \alpha) e \,$ 
and to deliver it we will exploit several useful results stated earlier 
in equations (\ref{NewComplProds}), (\ref{singleProd}) 
and Lemma \ref{l:decmu3}. 
Combining (\ref{singleProd}) and Lemma \ref{l:decmu3} we get 
\begin{equation}
  \label{n2:e3}
  \begin{split}
    \left[ x^{k + 1} \right. & + \left. \bar \alpha \dxkm1 \right]_i
    \left[ z^{k + 1} + \bar \alpha \dskm1 \right]_i 
      - \mu(\bar\alpha) = \\
    & = (1 - \bar \alpha) \left( x^{k + 1}_i z^{k + 1}_i - \mu_{k + 1} \right) 
        + \left[ \bar \alpha_k \dxk + \bar \alpha \dxkm1 \right]_i
          \left[ \bar \alpha_k \dsk + \bar \alpha \dskm1 \right]_i 
        - (1 + \bar \alpha \gamma_1) {\bar \alpha_k}^2 [\dxk]_i [\dsk]_i.
  \end{split}
\end{equation}
By~\eqref{n2:e3} and Assumption~\ref{a:n2}, we deliver the following 
bound on the proximity measure of the $\nN_2$ neighborhood of the iterate 
after the quasi-Newton step  
\begin{equation}
  \label{n2:n2bnd1}
  \begin{split}
    \| \left( X^{k + 1} + \right. & \left. \bar\alpha \dXkm1 \right)
    \left( Z^{k + 1} + \bar\alpha \dSkm1 \right) e - \mu(\bar\alpha) e \| = \\
    & = \| \left\{ %
      (1 - \bar \alpha) \left( x^{k + 1}_i z^{k + 1}_i - \mu_{k + 1} \right) 
      + \left[ \bar \alpha_k \dxk + \bar \alpha \dxkm1 \right]_i 
        \left[ \bar \alpha_k \dsk + \bar \alpha \dskm1 \right]_i \right. \\
    & \ \qquad \left. - (1 + \bar \alpha \gamma_1) 
        {\bar \alpha_k}^2 [\dxk]_i [\dsk]_i \right\}_{i = 1}^n \| \\
    & \le (1 - \bar \alpha) \| X^{k + 1} Z^{k + 1} e - \mu_{k + 1} e
    \| + \| \left( \bar \alpha_k \dXk + \bar \alpha \dXkm1 \right)
    \left( \bar \alpha_k \dSk + \bar \alpha \dSkm1 \right) e \| \\
    & \quad + | 1 + \bar \alpha \gamma_1 | {\bar \alpha_k}^2 \| \dXk \dSk e\| \\
    & \le (1 - \bar \alpha) \theta_{k + 1} \mu_{k + 1} 
          + | 1 + \bar \alpha \gamma_1 | {\bar \alpha_k}^2 
            \frac{\theta_k^2 + n (1 - \sigma_k)^2}{2^{3/2} (1 - \theta_k)} \mu_k \\
    & \quad + \| \left( \bar \alpha_k \dXk + \bar \alpha \dXkm1 \right) 
                 \left( \bar \alpha_k \dSk + \bar \alpha \dSkm1 \right) e \|.
  \end{split}
\end{equation}
In the last inequality we used the bound on the error in the Newton 
step $\| \dXk \dSk e \|$, see~\cite[Lemma 5.4]{Wright1997}.

To further exploit (\ref{n2:n2bnd1}) we need bounds on two terms which
appear in it: $1 + \bar \alpha \gamma_1$ and the second-order error
contributed by the composite direction
$\| \left( \bar \alpha_k \dXk + \bar \alpha \dXkm1 \right) \left( \bar
  \alpha_k \dSk + \bar \alpha \dSkm1 \right) e \|$.  The following
technical result delivers a bound for $|\gamma_1|$. (Observe that
$\gamma_1$ is evaluated only when the quasi-Newton iteration is
performed.)

\begin{lemma}
  \label{l:n2gbnd}
  Let $k + 1$ be a quasi-Newton iteration of
  Algorithm~\ref{alg:pdqnipm}.  Suppose that $v$ in
  Lemma~\ref{l:bbroyd} is given by the right-hand side
  of~\eqref{def:newtonsys} and Assumption~\ref{a:n2} holds. If
  $\sszk \in (0, 1]$ and $\sigma_k \in [0, 1)$, then
  \[ 
    |\gamma_1| \le \frac{2 (1 - \sszk (1 - \sigma_k)) \sqrt{\theta_{k
          + 1}^2 + (1 - \sigma_{k + 1})^2 n}}{\sszk (1 - \sigma_k)},
  \]
  where $\gamma_1$ is defined in Lemma~\ref{l:bbroyd}.
\end{lemma}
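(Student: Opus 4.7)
The plan is to combine the projection bound~\eqref{projection}, which gives $|\gamma_1| \le \|v\|/\|y_k\|$, with a direct upper bound on $\|v\|$ from Assumption~\ref{a:n2} and the lower bound on $\|y_k\|$ supplied by Lemma~\ref{l:ykbnd}. Since iteration $k$ is a Newton step taken from a feasible iterate, primal and dual feasibility are preserved, so the first two block-rows of the right-hand side of~\eqref{def:newtonsys} evaluated at $\xysk{k+1}$ vanish. Hence
\[
\|v\| = \| \sigma_{k+1}\mu_{k+1} e - X^{k+1} Z^{k+1} e \|.
\]

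The key observation for bounding this quantity is that $X^{k+1}Z^{k+1}e - \mu_{k+1}e$ is orthogonal to $e$, since $e^T X^{k+1}Z^{k+1}e = (x^{k+1})^T z^{k+1} = n\mu_{k+1}$, so the sum of its entries is zero. Splitting
\[
\sigma_{k+1}\mu_{k+1} e - X^{k+1} Z^{k+1} e = -\bigl(X^{k+1} Z^{k+1} e - \mu_{k+1} e\bigr) - (1 - \sigma_{k+1})\mu_{k+1} e,
\]
and using the Pythagorean theorem together with $\|e\|^2 = n$ and Assumption~\ref{a:n2}, I would obtain
\[
\|v\|^2 = \| X^{k+1} Z^{k+1} e - \mu_{k+1} e\|^2 + (1 - \sigma_{k+1})^2 \mu_{k+1}^2 \, n \le \bigl(\theta_{k+1}^2 + (1 - \sigma_{k+1})^2 n\bigr)\mu_{k+1}^2.
\]

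For the denominator, I would invoke Lemma~\ref{l:ykbnd}. Since iteration $k$ is a feasible Newton step, the standard identity (which follows from Corollary~\ref{c:ortho} applied to the Newton direction, exactly as in the first equality of the proof of Lemma~\ref{l:decmu3}) gives $\mu_{k+1} = (1 - \sszk (1 - \sigma_k)) \mu_k$. This lets me take $\rho_k = 1 - \sigma_k \in [0,1]$ in Lemma~\ref{l:ykbnd}, yielding
\[
\|y_k\| \ge \tfrac{1}{2}(1 - \sigma_k)\sszk \mu_k.
\]

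Plugging both bounds into~\eqref{projection} and substituting $\mu_{k+1}/\mu_k = 1 - \sszk (1 - \sigma_k)$ produces exactly the claimed estimate. The assumptions $\sszk > 0$ and $\sigma_k < 1$ are needed only to ensure the denominator $\sszk(1-\sigma_k)$ is positive. The only mildly subtle point is the Pythagorean step, which hinges on spotting that $X^{k+1}Z^{k+1}e - \mu_{k+1}e$ is $e$-mean-zero; everything else is a routine chain of substitutions.
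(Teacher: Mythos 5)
Your proposal is correct and follows essentially the same route as the paper's own proof: the same split of $v$ into the centered part $\mu_{k+1}e - X^{k+1}Z^{k+1}e$ plus the multiple of $e$, the same vanishing cross term from $e^T(\mu_{k+1}e - X^{k+1}Z^{k+1}e)=0$, and the same application of Lemma~\ref{l:ykbnd} with $\rho_k = 1-\sigma_k$ via $\mu_{k+1}=(1-\sszk(1-\sigma_k))\mu_k$. No gaps.
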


\begin{proof}
  We use the assumptions of the lemma, the fact that the iterates are
  primal and dual feasible, the property
  $e^T (\mu_{k + 1} e - X^{k + 1} Z^{k + 1} e) = 0$ and the relation
  $\mu_{k + 1} = (1 - \bar \alpha_k (1 - \sigma_k)) \mu_{k}$ to derive
  the result
  \begin{equation*} \label{l:n2gbnd:e1} %
    \begin{split}
      \|v\| & = \| \sigma_{k + 1} \mu_{k + 1} e - X^{k + 1} Z^{k +
        1} e\| = \| (\mu_{k + 1} e - X^{k + 1} Z^{k + 1} e) - (1 -
      \sigma_{k + 1}) \mu_{k + 1} e \| \\
      & = \sqrt{\| \mu_{k + 1} e - X^{k + 1} Z^{k + 1} e \|^2 - 2 (1
        - \sigma_{k + 1}) \mu_{k + 1} e^T (\mu_{k + 1} e - X^{k + 1}
        Z^{k + 1} e) + \| (1 - \sigma_{k + 1}) \mu_{k + 1} e \|^2} \\
      & \le \sqrt{\theta_{k + 1}^2 \mu_{k + 1}^2 + (1 - \sigma_{k +
          1})^2 \mu_{k + 1}^2 n}
      \ = (1 - \sszk (1 - \sigma_k)) \mu_{k} \sqrt{\theta_{k + 1}^2 +
        (1 - \sigma_{k + 1})^2 n}\ .
    \end{split}
  \end{equation*}

  By defining $\rho_k = 1 - \sigma_k$ we can see that
  $\mu_{k + 1} = (1 - (1 - \sigma_k) \sszk) \mu_k = (1 - \rho_k \sszk)
  \mu_k$ which ensures the sufficient decrease condition of
  Lemma~\ref{l:ykbnd}. Since $\sszk > 0$ and $\sigma_k < 1$, by the
  assumptions of the lemma, we have that $\| y_k \| > 0$. Then, by
  simple substitution of the previous equation and Lemma~\ref{l:ykbnd}
  in~\eqref{projection} we have
  \begin{equation*}
    \begin{split}
      |\gamma_1| 
        \le \frac{\| v_k \|}{\| y_k \|} 
      & \le \frac{(1 - \sszk (1 - \sigma_k)) \mu_{k}
            \sqrt{\theta_{k + 1}^2 + (1 - \sigma_{k + 1})^2 n}}{\frac{1 - \sigma_k}{2} \sszk \mu_k} 
        = \frac{2 (1 - \sszk (1 - \sigma_k))
          \sqrt{\theta_{k + 1}^2 + (1 - \sigma_{k + 1})^2 n}}{\sszk (1 - \sigma_k)}.
    \end{split}
  \end{equation*}
\end{proof}

Next we turn our attention to the error in the composite direction 
$\| \left( \bar \alpha_k \dXk + \bar \alpha \dXkm1 \right) 
    \left( \bar \alpha_k \dSk + \bar \alpha \dSkm1 \right) e \|$ 
and start from a technical result.

\begin{lemma}
  \label{l:combined}
  If $k + 1$ is a quasi-Newton iteration of Algorithm~\ref{alg:pdqnipm}, 
  then
  \begin{equation}
    \label{l:combined:e}
    \begin{split}
      Z^k \left( \sszk \dXk + \ssz \dXkm1 \right) e & + X^k \left(
        \sszk \dSk + \ssz \dSkm1 \right) e = \\
      & = (\ssz + \sszk (1 - \ssz)) \left(\mu_k e - X^k Z^k e \right) +
      (\mu(\ssz) - \mu_k) e - (1 + \gamma_1) \ssz \sszk^2 \dXk \dSk e.
  \end{split}
\end{equation}
\end{lemma}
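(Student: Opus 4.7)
The plan is to prove the identity by combining the Newton residual equation at iteration $k$, the modified residual equation (\ref{useful-result}) at the quasi-Newton iteration $k+1$, the expansion of $X^{k+1}Z^{k+1}e$ from (\ref{CProds-kp1}), and the scalar decrease rule for $\mu$ from Lemma~\ref{l:decmu3}. No new estimates are needed — the content is purely algebraic bookkeeping.

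First, I would split the LHS by linearity:
\[
  Z^k\!\left(\sszk\dXk + \ssz\dXkm1\right)e + X^k\!\left(\sszk\dSk + \ssz\dSkm1\right)e
  = \sszk\bigl(Z^k\dxk + X^k\dsk\bigr) + \ssz\bigl(Z^k\dxkm1 + X^k\dskm1\bigr).
\]
The first bracket is the third block of the Newton system (\ref{def:newtonsys}), so it equals $\sigma_k\mu_k e - X^kZ^k e$. For the second bracket, iteration $k+1$ is a quasi-Newton step, and equation (\ref{useful-result}) gives
\[
  Z^k\dxkm1 + X^k\dskm1 = \sigma_{k+1}\mu_{k+1}e - X^{k+1}Z^{k+1}e - \gamma_1\sszk^2\dXk\dSk e.
\]

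Next, I would substitute the expansion (\ref{CProds-kp1}), namely
$X^{k+1}Z^{k+1}e = (1-\sszk)X^kZ^ke + \sszk\sigma_k\mu_k e + \sszk^2\dXk\dSk e$,
into the expression above. Collecting the $X^kZ^ke$ terms, the $\dXk\dSk e$ terms, and the scalar multiples of $e$ produces
\[
  \sszk(1-\ssz)\sigma_k\mu_k e + \ssz\sigma_{k+1}\mu_{k+1}e - \bigl(\ssz + \sszk(1-\ssz)\bigr)X^kZ^ke - (1+\gamma_1)\ssz\sszk^2\dXk\dSk e,
\]
where I have used the identity $\sszk + \ssz(1-\sszk) = \ssz + \sszk(1-\ssz)$ to combine the two $X^kZ^ke$ contributions.

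Finally, the only remaining task is to check that the scalar $e$-coefficient matches the target, that is
\[
  \sszk(1-\ssz)\sigma_k\mu_k + \ssz\sigma_{k+1}\mu_{k+1} = \bigl(\ssz + \sszk(1-\ssz)\bigr)\mu_k + \bigl(\mu(\ssz) - \mu_k\bigr).
\]
Here I would apply Lemma~\ref{l:decmu3} together with the analogous Newton-step identity $\mu_{k+1} = (1-\sszk(1-\sigma_k))\mu_k$ to write $\mu(\ssz) = (1-\ssz(1-\sigma_{k+1}))(1-\sszk(1-\sigma_k))\mu_k$, expand, and cancel. This is the step most prone to sign errors, but it is a short symbolic verification. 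Substituting back yields exactly the right-hand side of (\ref{l:combined:e}), completing the proof.
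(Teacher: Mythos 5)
Your proposal is correct and follows essentially the same route as the paper: split the left-hand side into the Newton block equation at iteration $k$ plus the quasi-Newton relation~\eqref{useful-result} at $k+1$, expand $X^{k+1}Z^{k+1}e$ via~\eqref{CProds-kp1}, collect terms, and verify the scalar coefficient of $e$ using $\mu_{k+1}=(1-\sszk(1-\sigma_k))\mu_k$ and Lemma~\ref{l:decmu3}. The final scalar identity you leave as a ``short symbolic verification'' is exactly what the paper carries out in~\eqref{l:combined:e3}, and it checks out.
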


\begin{proof}
  We use equations~\eqref{def:newtonsys}, \eqref{useful-result} 
  and~\eqref{CProds-kp1} and some simple manipulations to obtain
  \begin{equation}
    \label{l:combined:e1}
    \begin{split}
      Z^k & \left( \sszk \dXk \! + \! \ssz \dXkm1 \right) e 
        + X^k \left( \sszk \dSk \! + \! \ssz \dSkm1 \right) e 
        = \sszk \left( Z^k \dxk \! + \! X^k \dsk \right) 
          + \ssz \left( Z^k \dxkm1 \! + \! X^k \dskm1 \right) \\
      & = \sszk (\sigma_k \mu_k e \! - \! X^k Z^k e) 
          + \ssz \left(\sigma_{k + 1} \mu_{k + 1} e - X^{k + 1} Z^{k + 1} e 
          - \gamma_1 {\sszk}^2 \dXk \dSk e \right) \\
      & = \sszk (\sigma_k \mu_k e \! - \! X^k Z^k e) 
          + \ssz \left(\sigma_{k + 1} \mu_{k + 1} e 
                       - (1 \! - \! \sszk) X^k Z^k e - \sszk \sigma_k \mu_k e 
                       - {\sszk}^2 \dXk \dSk e - \gamma_1 {\sszk}^2 \dXk \dSk e \right) \\
      & = (1 - \ssz) \sszk \sigma_k \mu_k e 
          - (\ssz + \sszk (1 - \ssz)) X^k Z^k e 
          + \ssz \sigma_{k + 1} \mu_{k + 1} e 
          - (1 +\gamma_1) \ssz \sszk^2 \dXk \dSk e.
    \end{split}
  \end{equation}
  After adding and subtracting the term $(\ssz + \sszk (1 - \ssz)) \mu_k e$ 
  we further rearrange the previous equation
  \begin{equation}
    \label{l:combined:e2}
    \begin{split}
      Z^k & \left( \sszk \dXk + \ssz \dXkm1 \right) e 
          + X^k \left( \sszk \dSk + \ssz \dSkm1 \right) e = \\
      & = (\ssz + \sszk (1 - \ssz)) \left(\mu_k e - X^k Z^k e \right)
          - (1 + \gamma_1) \ssz \sszk^2 \dXk \dSk e \\
      & \quad + ((1 - \ssz) \sszk \sigma_k 
        - (\ssz + \sszk (1 - \ssz))) \mu_k e + \ssz \sigma_{k + 1} \mu_{k + 1} e.
    \end{split}
  \end{equation}
  Then using $\mu_{k + 1} = (1 - \bar \alpha_k (1 - \sigma_k)) \mu_{k}$ 
  (which clearly holds for a step in Newton direction) and Lemma~\ref{l:decmu3} 
  which delivers a similar result for a step in quasi-Newton direction, we get: 
  \begin{equation}
    \label{l:combined:e3}
    \begin{split}
      ((1 - \ssz) \sszk \sigma_k 
      & - (\ssz + \sszk (1 - \ssz))) \mu_k e + \ssz \sigma_{k + 1} \mu_{k + 1} e 
        = (\sszk \sigma_k - \ssz \sszk \sigma_k - \ssz - \sszk + \ssz \sszk) \mu_k e 
          + \ssz \sigma_{k + 1} \mu_{k + 1} e \\
      & = \left[ - \ssz (1 - \sszk (1 - \sigma_k)) - \sszk (1 - \sigma_k) \right] \mu_k e 
          + \ssz \sigma_{k + 1} \mu_{k + 1} e \\
      & = \left[- 1 - \ssz (1 - \sszk (1 - \sigma_k)) + 1 - \sszk (1 - \sigma_k) \right] \mu_k e 
          + \ssz \sigma_{k + 1} \mu_{k + 1} e \\
      & = - \mu_k e - \ssz \mu_{k + 1} e + \mu_{k + 1} e + \ssz \sigma_{k + 1} \mu_{k + 1} e 
        \quad = \quad (1 - \ssz (1 - \sigma_{k + 1})) \mu_{k + 1} e - \mu_k e \\
      & = \mu(\ssz) e - \mu_k e.
    \end{split}
  \end{equation}
  By substituting~\eqref{l:combined:e3} in~\eqref{l:combined:e2}, 
  the desired result is obtained.
\end{proof}

\begin{lemma}
  \label{l:qnstepbnd}
  Let $k + 1$ be a quasi-Newton iteration of Algorithm~\ref{alg:pdqnipm} 
  and suppose that Assumption~\ref{a:n2} holds. Then
  \begin{equation*}
    \begin{split}
      \| \left( \right. & \left. \bar \alpha_k \dXk + \bar \alpha \dXkm1 \right) 
         \left( \bar \alpha_k \dSk + \bar \alpha \dSkm1 \right) e \| \le \\
      & \le \frac{\mu_k}{2^{3/2} (1 \! - \! \theta_k)} 
        \Bigg[ %
        [1 - (1 \! - \! \ssz (1 \! - \! \sigma_{k + 1})) (1 \! - \! \sszk (1 \! - \! \sigma_k))]^2 n 
        + \left( (\ssz + \sszk (1 - \ssz)) \theta_k + 
        |1 \! + \! \gamma_1| \ssz \sszk^2 
        \frac{\theta_k^2 + n (1 \! - \! \sigma_k)^2}{2^{3/2} (1 - \theta_k)} \right)^2 
        \Bigg].
    \end{split}
  \end{equation*}
\end{lemma}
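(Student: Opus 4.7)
The plan is to mimic the classical argument used in Wright's Lemma~5.4, but applied to the composite direction $(\sszk \dxk + \ssz \dxkm1, \sszk \dsk + \ssz \dskm1)$, feeding in Lemma~\ref{l:combined} as a replacement for the usual Newton residual. First I would introduce the scaled vectors $u = (X^k Z^k)^{-1/2} Z^k (\sszk \dxk + \ssz \dxkm1)$ and $v = (X^k Z^k)^{-1/2} X^k (\sszk \dsk + \ssz \dskm1)$, so that $u_i v_i$ equals the $i$-th component of the vector we want to bound, and $u + v = (X^k Z^k)^{-1/2} w$ where $w := Z^k(\sszk \dxk + \ssz \dxkm1) + X^k(\sszk \dsk + \ssz \dskm1)$. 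Corollary~\ref{c:ortho} gives $u^T v = (\sszk \dxk + \ssz \dxkm1)^T(\sszk \dsk + \ssz \dskm1) = 0$, so the standard inequality (Wright, Lemma~5.3) yields $\|UVe\| \le 2^{-3/2} \|u + v\|^2$. Moreover, $\xysk{k} \in \nN_2(\theta_k)$ implies $x^k_i z^k_i \ge (1-\theta_k) \mu_k$, which gives $\|u + v\|^2 \le \|w\|^2 / [(1-\theta_k) \mu_k]$.

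The heart of the argument is then to invoke Lemma~\ref{l:combined} to write $w = (\mu(\ssz) - \mu_k) e + (\ssz + \sszk(1-\ssz))(\mu_k e - X^k Z^k e) - (1+\gamma_1) \ssz \sszk^2 \dXk \dSk e$, and to split this into its component along $e$ and its component orthogonal to $e$. The splitting is legitimate because $e^T(\mu_k e - X^k Z^k e) = 0$ by definition of $\mu_k$, and $e^T(\dXk \dSk e) = {\dxk}^T \dsk = 0$ by Corollary~\ref{c:ortho}. Hence $\|w\|^2$ decomposes as an orthogonal sum: the $e$-part contributes $\|(\mu(\ssz) - \mu_k) e\|^2 = [1 - (1 - \ssz(1-\sigma_{k+1}))(1 - \sszk(1-\sigma_k))]^2 \mu_k^2 n$ by equation~\eqref{l:qnstepbnd:e4}, while the orthogonal part is bounded by the triangle inequality using $\|\mu_k e - X^k Z^k e\| \le \theta_k \mu_k$ (the $\nN_2(\theta_k)$ proximity) and the classical second-order Newton bound $\|\dXk \dSk e\| \le \frac{\theta_k^2 + n(1-\sigma_k)^2}{2^{3/2}(1-\theta_k)} \mu_k$ from Wright's Lemma~5.4. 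The two estimates combine to reproduce exactly the bracketed expression in the statement.

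Substituting back gives $\|UVe\| \le 2^{-3/2} \|u+v\|^2 \le \|w\|^2 / [2^{3/2}(1-\theta_k)\mu_k] \le \frac{\mu_k}{2^{3/2}(1-\theta_k)} \cdot [\text{bracketed expression}]$, matching the claim. The delicate step will be the orthogonality decomposition in $w$: without it one would only obtain the square of a sum of three magnitudes, which is quantitatively worse and would spoil the clean separation of the $n$-dependent centering term from the two $\theta_k$-dependent proximity terms. Everything else is bookkeeping on the $\nN_2$ neighborhood and the bound on the Newton second-order error.
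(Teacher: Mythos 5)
Your proposal is correct and follows essentially the same route as the paper's proof: the same scaled vectors $u_k = (D^k)^{-1}(\sszk\dXk+\ssz\dXkm1)e$, $v_k = D^k(\sszk\dSk+\ssz\dSkm1)e$ with $u_k^Tv_k=0$ from Corollary~\ref{c:ortho}, the bound $\|U_kV_ke\|\le 2^{-3/2}\|u_k+v_k\|^2$, the substitution of Lemma~\ref{l:combined}, and the orthogonal split of the $e$-component (the paper's equation~\eqref{useful-result-2}) followed by the triangle inequality and Wright's Lemma~5.4 on the remaining part. Nothing is missing.
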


\begin{proof}
Let us first define $D^k = (X^k)^{1/2} (Z^k)^{-1/2}$ and the scaled vectors
  \[
      u_k = {(D^k)}^{-1} \left( \bar \alpha_k \dXk + \bar \alpha \dXkm1 \right) e 
      \quad {\mbox {and}} \quad 
      v_k = {D^k} \left( \bar \alpha_k \dSk + \bar \alpha \dSkm1 \right) e.
  \]
Using Lemma~\ref{l:ortho}, Corollary~\ref{c:ortho} and observing 
that all the involved matrices are diagonal, we get $u_k^T v_k = 0$.
Hence vectors $u_k$ and $v_k$ satisfy the assumptions of Lemma~5.3 
in \cite{Wright1997}. 
With $U_k = \diag(u_k)$ and $V_k = \diag(v_k)$, we write 
\begin{equation}
  \label{l:qnstepbnd:e1}
  \begin{split}
    \| \left( \bar \alpha_k \dXk + \bar \alpha \dXkm1 \right) & 
       \left( \bar \alpha_k \dSk + \bar \alpha \dSkm1 \right) e \| 
       = \| {(D^k)}^{-1} \left( \bar \alpha_k \dXk + \bar \alpha \dXkm1 \right) 
            {D^k} \left( \bar \alpha_k \dSk + \bar \alpha \dSkm1 \right) e \| \\
    & = \| U_k V_k e \| \ \le \ 2^{-3/2} \| u_k + v_k \|^2 \\
    & = 2^{-3/2} \| {(D^k)}^{-1} \left( \bar \alpha_k \dXk + \bar \alpha \dXkm1 \right) e 
                    + {D^k} \left( \bar \alpha_k \dSk + \bar \alpha \dSkm1 \right) e \|^2. 
  \end{split}
\end{equation}
After multiplying both sides of~\eqref{l:combined:e} by $(X^k Z^k)^{-1/2}$ 
and replacing it in~\eqref{l:qnstepbnd:e1} we obtain
\begin{equation}
  \label{l:qnstepbnd:e2}
  \begin{split}
    \| & \left. \left( \bar \alpha_k \dXk \right. + \bar \alpha \dXkm1 \right) 
       \left( \bar \alpha_k \dSk + \bar \alpha \dSkm1 \right) e \| \le \\
    & \le 2^{-3/2} \| (X^k Z^k)^{-1/2} \left\{ (\ssz + \sszk (1 - \ssz))
      \left(\mu_k e - X^k Z^k e \right) + (\mu(\ssz) - \mu_k) e 
      - (1 + \gamma_1) \ssz \sszk^2 \dXk \dSk e \right\} \|^2 \\
    & = 2^{-3/2} \sum_{i = 1}^n \frac{
      \left\{ (\ssz + \sszk (1 - \ssz)) \left(\mu_k - x^k_i z^k_i \right) 
      + \mu(\ssz) - \mu_k 
      - (1 + \gamma_1) \ssz \sszk^2 [\dxk]_i [\dsk]_i \right\}^2}{x^k_i z^k_i} \\
    & \le \frac{2^{-3/2}}{(1 - \theta_k) \mu_k} \| (\ssz + \sszk (1 -
    \ssz)) \left(\mu_k e - X^k Z^k e \right) - (1 + \gamma_1) \ssz
    \sszk^2 \dXk \dSk e + (\mu(\ssz) - \mu_k) e \|^2,
  \end{split}
\end{equation}
where the last inequality comes from the fact that $\xysk{k}$ belongs
to $\nN_2(\theta_k)$, hence 
$(1 - \theta_k) \mu_k \le x^k_i z^k_i \le (1 + \theta_k) \mu_k$ 
for all $i$. Now we use Lemma~\ref{l:ortho} again and the definition 
of $\mu_k$ to observe that
\begin{equation}
  \label{useful-result-2}
  e^T \left( (\ssz + \sszk (1 - \ssz)) \left(\mu_k e - X^k Z^k e \right) 
             - (1 + \gamma_1) \ssz \sszk^2 \dXk \dSk e \right) = 0
\end{equation}
and further rearrange~\eqref{l:qnstepbnd:e2}:
\begin{equation}
  \label{l:qnstepbnd:e3}
  \begin{split}
    \| & \left. \left( \bar \alpha_k \dXk \right. + \bar \alpha \dXkm1 \right) 
                \left( \bar \alpha_k \dSk + \bar \alpha \dSkm1 \right) e \| \le \\
    & \le \frac{2^{-3/2}}{(1 - \theta_k) \mu_k} 
        \left[ \left\| (\ssz + \sszk (1 - \ssz)) 
               \left(\mu_k e - X^k Z^k e \right) 
               - (1 + \gamma_1) \ssz \sszk^2 \dXk \dSk e \right\|^2 
               + \| (\mu(\ssz) - \mu_k) e \|^2 
        \right].
  \end{split}
\end{equation}
The second norm on the right-hand side of~\eqref{l:qnstepbnd:e3} is
given by~\eqref{l:qnstepbnd:e4}.  Using the definition of
$\nN_2(\theta_k)$ neighborhood and Lemma~5.4 from~\cite{Wright1997}, 
for the first norm we get
\begin{equation}
  \label{l:qnstepbnd:e5}
  \begin{split}
    \left\| (\ssz + \sszk (1 - \ssz)) \right. & \left. \left(\mu_k e -
        X^k Z^k e \right) - (1 + \gamma_1) \ssz \sszk^2 \dXk \dSk
      e \right\|^2 \le \\
    & \le \left( \left\| (\ssz + \sszk (1 - \ssz)) \left(\mu_k e - X^k
          Z^k e \right) \right\| + \left\| (1 + \gamma_1) \ssz
        \sszk^2 \dXk \dSk e \right\| \right)^2\\
    & \le \left( %
      (\ssz + \sszk (1 - \ssz)) \theta_k + %
      |1 + \gamma_1| \ssz \sszk^2 \frac{\theta_k^2 + n (1 -
        \sigma_k)^2}{2^{3/2} (1 - \theta_k)} %
    \right)^2 \mu_k^2 .\\
  \end{split}
\end{equation}
Finally, by substituting~\eqref{l:qnstepbnd:e4} and~\eqref{l:qnstepbnd:e5}
in~\eqref{l:qnstepbnd:e3} we obtain the required result. 
\end{proof}

We are ready to state the main result of this subsection.  In
Theorem~\ref{t:n2conv} we show that it is possible to choose
sufficiently small values for the step-sizes $\sszk$ and $\ssz$ such
that $\xysk{k + 2} \in \nN_2(\theta_{k})$. Therefore, we ensure that
the quasi-Newton step taken after a Newton step remains in the $\nN_2$
neighborhood. This implies that all the iterates generated by the
algorithm belong to $\nN_2$. The upper bounds for the step-sizes as
stated in the theorem are then used to determine the worst-case
complexity of Algorithm~\ref{alg:pdqnipm} operating in $\nN_2$.
First, recall~\cite[Theorem~5.6]{Wright1997} that it is possible to
choose parameters $\theta_k, \theta_{k + 1} \in (0, 1)$ and
$\sigma_k, \sigma_{k + 1} \in (0, 1)$ so that
  \begin{equation}
    \label{t:n2conv:e2}
    \frac{\theta_k^2 + n (1 - \sigma_k)^2}{2^{3/2} (1 - \theta_k)}
       \le \theta_k \sigma_k
    \quad \text{and} \quad 
    \frac{\theta_{k + 1}^2 + n (1 - \sigma_{k + 1})^2}{2^{3/2} (1 - \theta_{k + 1})}
       \le \theta_{k + 1} \sigma_{k + 1}.
  \end{equation}

{ 
\begin{theorem}
  \label{t:n2conv}
  Let $k + 1$ be a quasi-Newton iteration of
  Algorithm~\ref{alg:pdqnipm}. Suppose that Assumptions~\ref{a:pdfeas}
  and~\ref{a:n2} hold and that $\theta_{k + 1} = \theta_k$ and
  $\sigma_{k + 1} = \sigma_k$. If the step-sizes in Newton and
  quasi-Newton iterations $\sszk$ and $\ssz$ satisfy
  \begin{equation} \label{t:n2conv:ss}
    \sszk \in \left( 0, \min\left\{\frac{1 - \sigma_k}{4 \sigma_k},
        \frac{\sigma_k (1 - \sigma_k)}{10 (1 - \sigma_k) + 4}\right\}
    \right] %
    \quad \text{and} \quad %
    \ssz \in \left[ \sszk, \frac{\sigma_k (1 - \sigma_k)}{10 (1 -
        \sigma_k) + 4} \right]
  \end{equation}
  for $\sigma_k \in [0, 1)$ and $\theta_k\in (0, 16/25)$, then
  \[
    \xysk{k + 2} \doteq \xysk{k + 1} + \ssz \dxyskm1 \in \nN_2(\theta_{k}).
  \]
\end{theorem}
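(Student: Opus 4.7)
The plan is to combine the bound \eqref{n2:n2bnd1} on the $\nN_2$-proximity of $\xysk{k+2}$ with the expression for $\mu(\bar\alpha)$ from Lemma~\ref{l:decmu3}, and show that under \eqref{t:n2conv:ss} the former is at most $\theta_k$ times the latter. With the equal-parameter hypotheses $\sigma_{k+1}=\sigma_k$ and $\theta_{k+1}=\theta_k$, Lemma~\ref{l:decmu3} gives $\mu(\bar\alpha)=(1-\bar\alpha(1-\sigma_k))(1-\bar\alpha_k(1-\sigma_k))\mu_k$, so after replacing $(\theta_k^2+n(1-\sigma_k)^2)/(2^{3/2}(1-\theta_k))$ by $\theta_k\sigma_k$ via \eqref{t:n2conv:e2}, dividing through by $\theta_k\mu_k$, and using the identity $(1-\bar\alpha(1-\sigma_k))(1-\bar\alpha_k(1-\sigma_k))-(1-\bar\alpha)(1-\bar\alpha_k(1-\sigma_k))=\bar\alpha\sigma_k(1-\bar\alpha_k(1-\sigma_k))$, the target reduces to
\[
\bar\alpha\sigma_k(1-\bar\alpha_k(1-\sigma_k)) \ge |1+\bar\alpha\gamma_1|\,\bar\alpha_k^2\sigma_k + \frac{1}{\theta_k\mu_k}\,\|(\bar\alpha_k\dXk+\bar\alpha\dXkm1)(\bar\alpha_k\dSk+\bar\alpha\dSkm1)e\|.
\]

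The first non-trivial step is a uniform bound on $|1+\bar\alpha\gamma_1|$. Lemma~\ref{l:n2gbnd} combined with \eqref{t:n2conv:e2} yields $|\gamma_1|\le C_1/(\bar\alpha_k(1-\sigma_k))$ for a constant $C_1$ depending only on $\theta_k$; the auxiliary restriction $\bar\alpha_k\le(1-\sigma_k)/(4\sigma_k)$ together with $\bar\alpha\le\sigma_k(1-\sigma_k)/(10(1-\sigma_k)+4)$ then keeps $|\bar\alpha\gamma_1|$ bounded by an absolute constant, so that $|1+\bar\alpha\gamma_1|$ is itself a controllable constant. The second step is to estimate the composite-direction error using Lemma~\ref{l:qnstepbnd}: after plugging in \eqref{t:n2conv:e2}, the bound just obtained for $|1+\gamma_1|$, and the explicit form of $[1-(1-\bar\alpha(1-\sigma_{k+1}))(1-\bar\alpha_k(1-\sigma_k))]^2n$ from \eqref{l:qnstepbnd:e4}, the bound becomes an expression quadratic in $\bar\alpha+\bar\alpha_k$ and in $(1-\sigma_k)\sqrt n$, multiplied by $\mu_k/(1-\theta_k)$. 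The restriction $\theta_k<16/25$ keeps $(1-\theta_k)^{-1}$ a harmless constant.

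With these ingredients in place, the step-size hypothesis $\bar\alpha\ge\bar\alpha_k$ from \eqref{t:n2conv:ss} forces the left-hand side of the displayed inequality to dominate the $\bar\alpha_k^2$-term on the right once the constant $|1+\bar\alpha\gamma_1|$ is absorbed, and the explicit upper bound $\bar\alpha\le\sigma_k(1-\sigma_k)/(10(1-\sigma_k)+4)$ is calibrated precisely to absorb the $\bar\alpha^2$-sized composite contribution into $\bar\alpha\sigma_k$ times a small constant. The main obstacle is the algebraic bookkeeping in this last step: Lemma~\ref{l:qnstepbnd} mixes the squared term $((\bar\alpha+\bar\alpha_k(1-\bar\alpha))\theta_k+|1+\gamma_1|\bar\alpha\bar\alpha_k^2\theta_k\sigma_k)^2$ with a term of size $n(1-\sigma_k)^2$, and converting this into a clean bound of size at most $\tfrac{1}{2}\bar\alpha\sigma_k\theta_k\mu_k$ is what ultimately pins down the specific constants $4$ and $10$ in \eqref{t:n2conv:ss}. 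In particular, the bound $\bar\alpha_k\le(1-\sigma_k)/(4\sigma_k)$ is what tames the product $\bar\alpha_k|\gamma_1|$ hidden in $|1+\bar\alpha\gamma_1|$, while the common ceiling $\sigma_k(1-\sigma_k)/(10(1-\sigma_k)+4)$ on both step-sizes is dictated by the composite-direction estimate together with the inherited-residual term $(1-\bar\alpha)\theta_k\mu_{k+1}$.
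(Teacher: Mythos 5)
Your outline of the proximity estimate follows the paper's own route: reduce \eqref{n2:n2bnd1} against $\theta_k\mu(\ssz)$ using Lemma~\ref{l:decmu3}, bound $\gamma_1$ via Lemma~\ref{l:n2gbnd} and \eqref{t:n2conv:e2}, bound the composite error via Lemma~\ref{l:qnstepbnd}, and calibrate the constants $4$ and $10$. However, there is a genuine gap: you never prove that $x^{k+2}>0$ and $z^{k+2}>0$. The proximity inequality only yields $x^{k+2}_i z^{k+2}_i \ge (1-\theta_k)\mu(\ssz)>0$, i.e.\ positive \emph{products}, which is compatible with both factors being negative; since $\nN_2(\theta_k)\subset\mathcal{F}$ requires strict positivity, the conclusion $\xysk{k+2}\in\nN_2(\theta_k)$ does not follow without this step. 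This is also where you misidentify the role of the hypothesis $\theta_k\in(0,16/25)$: it is not there to keep $(1-\theta_k)^{-1}$ harmless (that factor is already absorbed into $\theta_k\sigma_k$ by \eqref{t:n2conv:e2}); it is needed in the positivity argument, where assuming $x^{k+2}_i<0$ and $z^{k+2}_i<0$ forces $x^k_iz^k_i<9\ssz^2\theta_k\sigma_k\mu_k$, and combining this with $x^k_iz^k_i\ge(1-\theta_k)\mu_k$ and $\ssz\le 1/4$ gives $\theta_k>16/25$, a contradiction.

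A secondary inaccuracy: your claim that $|1+\bar\alpha\gamma_1|$ is ``bounded by an absolute constant'' is false as stated. The theorem only imposes $\ssz\ge\sszk$, so $\sszk$ can be arbitrarily small relative to $\ssz$, and then $\ssz|\gamma_1|\le 4\ssz/(\sszk(1-\sigma_k))$ is unbounded. What saves the argument (and what the paper actually uses in \eqref{t:n2conv:new7} and \eqref{t:n2conv:new5}) is that $\gamma_1$ only ever appears multiplied by $\sszk^2$, so the $1/\sszk$ in the $\gamma_1$ bound cancels: $|1+\ssz\gamma_1|\sszk^2\le\ssz^2(1+4/(1-\sigma_k))$ and $|1+\gamma_1|\ssz\sszk^2\sigma_k$ is tamed by $\sszk\le(1-\sigma_k)/(4\sigma_k)$. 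Your closing sentence gestures at this, but the argument should be run on the products, not on $|1+\bar\alpha\gamma_1|$ in isolation.
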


\begin{proof}
  We will first show that for all $\sszk$ and $\ssz$ satisfying 
  the conditions of the theorem
  \begin{equation}
    \label{t:n2conv:e11}
    \| \left( X^{k + 1} + \right. \left. \bar\alpha \dXkm1 \right)
    \left( Z^{k + 1} + \bar\alpha \dSkm1 \right) e - \mu(\bar\alpha) e
    \| - \theta_{k} \mu(\ssz) \le 0
  \end{equation}
  holds.
  This condition ensures that the Newton step at iteration $k + 2$
  will be successfully performed and the algorithm converges. By
  inequality~\eqref{n2:n2bnd1}, condition~\eqref{t:n2conv:e11} is
  satisfied if
  \begin{equation} \label{t:n2conv:new2}
    \begin{split}
      \left[(1 - \bar \alpha) \theta_{k + 1} \mu_{k + 1} - \theta_{k}
      \mu(\ssz)\right] & + | 1 + \bar \alpha \gamma_1 | {\bar \alpha_k}^2
      \frac{\theta_k^2 + n (1 - \sigma_k)^2}{2^{3/2} (1 - \theta_k)}
      \mu_k \\
      & + \| \left( \bar \alpha_k \dXk + \bar \alpha \dXkm1 \right)
      \left( \bar \alpha_k \dSk + \bar \alpha \dSkm1 \right) e \| \le
      0.
    \end{split}
  \end{equation}
  We will derive bounds to each term on the left-hand side of this
  inequality in order to find an expression in a form
  $K_1 \ssz^2 - K_2 \ssz$, $K_1, K_2 > 0$, which will be nonpositive
  for small values of $\ssz$.

  For the first term, we use the fact that
  $\theta_{k + 1} = \theta_k$, $\sigma_{k + 1} = \sigma_k$ and
  $\sszk \in (0, 1)$. In addition, we use Lemma~\ref{l:decmu3} to
  expand $\mu(\ssz)$ and the fact that $\mu_{k + 1}$ was calculated in
  the Newton step $k$ to obtain
  \begin{equation}
    \label{t:n2conv:new1}
    (1 - \bar \alpha) \theta_{k + 1} \mu_{k + 1} - \theta_{k}
      \mu(\ssz) = [ (1 - \ssz) (\theta_{k + 1} - \theta_{k}) - \ssz
    \sigma_{k + 1} \theta_{k} ] (1 - \sszk (1 - \sigma_k)) \mu_k
    \le - \ssz \sigma_k^2 \theta_{k} \mu_k.
  \end{equation}
  For the second and third terms, we first apply~\eqref{t:n2conv:e2}
  in Lemma~\ref{l:n2gbnd} to simplify the bound of $\gamma_1$:
  \begin{equation} \label{t:n2conv:new4}
    \begin{split}
      |\gamma_1| & \le \frac{2 (1 - \sszk (1 - \sigma_k))
        \sqrt{2^{3/2} (1 - \theta_{k + 1})} \sqrt{\frac{\theta_{k +
              1}^2 + (1 - \sigma_{k + 1})^2 n}{2^{3/2}
            (1 - \theta_{k + 1})}}}{\sszk (1 - \sigma_k)} \\
      & \le \frac{2 (1 - \sszk (1 - \sigma_k)) \sqrt{2^{3/2} (1 -
          \theta_{k + 1})} \sqrt{\theta_{k + 1} \sigma_{k + 1}}}{\sszk
        (1 - \sigma_k)} \ \le\ \frac{4}{\sszk (1 - \sigma_k)} - 4.
    \end{split}
  \end{equation}
  Therefore, using~\eqref{t:n2conv:e2} again, we derive the following 
  bound to the second term
  \begin{equation} \label{t:n2conv:new7}
    \begin{split}
      |1 + \ssz \gamma_1| \sszk^2 & \frac{\theta_k^2 + n (1 -
        \sigma_k)^2}{2^{3/2} (1 - \theta_k)} \mu_k \le \left[ 1 +
        \ssz \left( \frac{4}{\sszk (1 - \sigma_k)} - 4 \right)
      \right] \sszk^2 \theta_k \sigma_k \mu_k \\
      & = \sszk^2 \theta_k \sigma_k \mu_k + 4 \ssz \sszk \left(
        \frac{1 - \sszk (1 - \sigma_k)}{1 - \sigma_k} \right) \theta_k
      \sigma_k \mu_k \ \le\ \ssz^2 \left[ 1 + \frac{4}{1 -
          \sigma_k} \right] \theta_k \sigma_k \mu_k,
    \end{split}
  \end{equation}
  where in the last inequality we used the condition
  $\sszk \le \ssz \le 1$ from~\eqref{t:n2conv:ss}.  For the
  third term in~\eqref{t:n2conv:new2}, we use the bound obtained in
  Lemma~\ref{l:qnstepbnd} and analyze each part of it
  independently. First, since $\sszk \le \ssz$ and
  $\sigma_{k + 1} = \sigma_k$, we observe that
  \begin{equation} \label{t:n2conv:new3}
    \begin{split}
      [1 - (1 - \ssz (1 - \sigma_{k + 1})) & (1 - \sszk (1 - \sigma_k))]^2 n 
      = [\sszk (1 - \sigma_k) + \ssz (1 - \sszk (1 - \sigma_{k})) (1 - \sigma_k))]^2 n \\
      & \le \ssz^2 (1 - \sigma_k)^2 [1 + (1 - \sszk (1 - \sigma_k))]^2 n 
        \le 4 \ssz^2 (1 - \sigma_k)^2 n.
    \end{split}
  \end{equation}
  Using bound~\eqref{t:n2conv:new4}, assumption 
  $\sszk \le (1 - \sigma_k) / (4 \sigma_k)$ in~\eqref{t:n2conv:ss}, 
  and~\eqref{t:n2conv:e2} again, we also obtain
  \begin{equation}
    \label{t:n2conv:new5}
    \begin{split}
      \left[ %
        (\ssz + \sszk (1 - \ssz)) \theta_k + %
        |1 + \gamma_1| \ssz \sszk^2 \frac{\theta_k^2 + n (1 -
          \sigma_k)^2}{2^{3/2} (1 - \theta_k)} %
      \right]^2 & \le \left[\ssz + \sszk (1 - \ssz) +
        \left(\frac{4}{\sszk (1 - \sigma_k)} - 3 \right) \ssz \sszk^2
        \sigma_k \right]^2 \theta_k^2 \\
      & \le \left[\ssz + \sszk (1 - \ssz) + \frac{4}{\sszk (1 -
          \sigma_k)} \ssz \sszk^2 \sigma_k \right]^2 \theta_k^2 \\
      & \le \ssz^2 \left[ 1 + (1 - \ssz) + \frac{4 \sszk \sigma_k}{(1
          - \sigma_k)} \right]^2 \theta_k^2 \\
      & \le 9 \ssz^2 \theta_k^2.
    \end{split}
  \end{equation}
  By combining~\eqref{t:n2conv:new3} and~\eqref{t:n2conv:new5} in the
  statement of Lemma~\ref{l:qnstepbnd} and
  applying~\eqref{t:n2conv:e2} once more, we derive a bound to the
  third term of~\eqref{t:n2conv:new2}
  \begin{equation}
    \label{t:n2conv:new6}
    \| \left( \bar \alpha_k \dXk + \bar \alpha \dXkm1 \right)
    \left( \bar \alpha_k \dSk + \bar \alpha \dSkm1 \right) e \|
    \le \frac{\mu_k}{2^{3/2} (1 - \theta_k)}
    \left(4 \ssz^2 (1 - \sigma_k)^2 n + 9 \ssz^2
      \theta_k^2 \right) \le 9 \ssz^2 \theta_k \sigma_k \mu_k.
  \end{equation}
  By~\eqref{t:n2conv:new1}, \eqref{t:n2conv:new7}
  and~\eqref{t:n2conv:new6}, the following bound on expression 
  in~\eqref{t:n2conv:new2} is obtained 
  \begin{equation}
    \label{t:n2conv:new8}
    \begin{split}
      (1 - \ssz) \theta_{k + 1} \mu_{k + 1} - \theta_{k} \mu(\ssz)
      & + | 1 + \ssz \gamma_1 | \sszk^2 \frac{\theta_k^2 + n (1 -
        \sigma_k)^2}{2^{3/2} (1 - \theta_k)}
      \mu_k \\
      & + \| \left( \sszk \dXk + \ssz \dXkm1 \right)
      \left( \sszk \dSk + \ssz \dSkm1 \right) e \| \\
      & \le - \ssz \sigma_k^2 \theta_{k} \mu_k + \ssz^2 \left[ 1 +
        \frac{4}{1 - \sigma_k} \right]
      \theta_k \sigma_k \mu_k + 9 \ssz^2 \theta_k \sigma_k \mu_k \\
      & \le \ssz \left[ 10 \ssz + \frac{4 \ssz}{1 - \sigma_k} -
        \sigma_k \right] \theta_k \sigma_k \mu_k,
    \end{split}
  \end{equation}
  which is negative only if
  $\ssz \le \sigma_k (1 - \sigma_k) / (10 (1 - \sigma_k) + 4)$, as
  requested by~\eqref{t:n2conv:ss}. This bound on $\ssz$
  implies a bound on $\sszk$, due to condition $\sszk \le
  \ssz$. Therefore, we arrive in the step-size conditions~\eqref{t:n2conv:ss} of the
  theorem.
  Using~\eqref{t:n2conv:e11} we also have that
  \begin{equation} \label{t:n2conv:e12} %
    \left( x^{k + 1}_i + \ssz [\dxkm1]_i \right) 
    \left( z^{k + 1}_i + \ssz [\dskm1]_i \right) 
    \ge (1 - \theta_{k}) \mu(\ssz) > 0.
  \end{equation}
  
  We now show that the new iterate belongs to $\mathcal{F}$. By
  Assumption~\ref{a:pdfeas} and Lemma~\ref{l:bbroyd} we know that all
  iterates remain primal and dual feasible. It remains to show that
  $x^{k + 2}$ and $z^{k + 2}$ are strictly positive. We follow the
  same arguments as~\cite{Monteiro1989} and adapt them to our
  case. Suppose by contradiction that $x^{k + 2}_i \le 0$ or
  $z^{k + 2}_i \le 0$ hold for some $i$. By~\eqref{t:n2conv:e12}, we
  have that $x^{k + 2}_i < 0$ and $z^{k + 2}_i < 0$ and that implies
  $x^k_i z^k_i < (\sszk [\dxk]_i + \ssz [\dxkm1]_i) (\sszk [\dsk]_i +
  \ssz [\dskm1]_i) \le 9 \ssz^2 \theta_k \sigma_k \mu_k$ by
  inequality~\eqref{t:n2conv:new6}.  Since
  $\xysk{k} \in \nN_2(\theta_{k})$ and $\ssz \le 1/4$,
  by~\eqref{t:n2conv:ss}, we conclude that
  $(1 - \theta_k) \mu_k < (9/16) \theta_k \mu_k$ and, therefore,
  $\theta_k > 16/25$, which contradicts the choice of $\theta_k$.
  Hence, $\xysk{k + 2}$ belongs to $\nN_2(\theta_{k})$ and the Newton
  step at iteration $k + 2$ also falls in the $\nN_2(\theta_k)$
  neighborhood.
\end{proof}

For the polynomial convergence of Algorithm~\ref{alg:pdqnipm}, we
define $\sigma_k = \sigma_{k + 1} = 1 - 0.4 / \sqrt{n}$ and
$\theta_{k} = \theta_{k + 1} = \theta_{k + 2} = 0.4$, which satisfy
condition~\eqref{t:n2conv:e2}~\cite{Wright1997} and maintain all the
previous results. By Lemma~\ref{l:decmu3},
$\mu(\ssz) \le \mu_{k + 1} \le \mu_{k}$.  So it is enough to look just
at the Newton steps, which are easier to
analyze. Using~\eqref{t:n2conv:ss} it is not hard to see that
\[
  \sszk \ge \min\left\{\frac{1 - \sigma_k}{4 \sigma_k}, \frac{\sigma_k
      (1 - \sigma_k)}{10 (1 - \sigma_k) + 4}\right\} \ge
  \min\left\{\frac{0.1}{\sqrt{n}}, \frac{0.03}{\sqrt{n}} \right\} =
  \frac{0.03}{\sqrt{n}}.
\]
Therefore,
\[
    \mu_{k + 1} \le \left(1 - \frac{0.03}{\sqrt{n}} \frac{0.4}{\sqrt{n}} \right) \mu_k 
    = \left(1 - \frac{0.012}{n} \right) \mu_k, \quad k = 0, 2, 4, \dots,
\]
from which the convergence with the worst-case iteration 
complexity of $O(n)$ can be derived.
}

\subsection{The $\nN_s$ neighborhood}
\label{ns}

Colombo and Gondzio~\cite{Colombo2008} used the symmetric
neighborhood $\nN_s(\gamma)$, defined by
\[
  \nN_s(\gamma) \doteq \left\{ (x, \lambda, z) \in \F\ |\ \gamma \mu \le
  x_i z_i \le \frac{1}{\gamma} \mu, i = 1,\dots, n \right\},
\]
for $\gamma \in (0, 1)$, which is related with the
$\nN_{-\infty}(\gamma)$ neighborhood used in long-step primal-dual
interior point algorithms. The idea of the symmetric neighborhood is
to add an upper bound on the complementarity pairs, so that their 
products do not become too large with respect to the average. 
The authors showed that the worst-case iteration complexity for linear 
feasible primal-dual interior point methods remains $O(n)$ and 
the new neighborhood has a better practical interpretation. 
As \hopdm~\cite{Gondzio1995} implements the $\nN_s$ neighborhood and 
it was used in the numerical experiments in~\cite{GondzioSobral2019} 
for quasi-Newton IPM, it is natural to ask about the iteration
complexity of Algorithm~\ref{alg:pdqnipm} operating in the $\nN_s$ 
neighborhood. 
The analysis presented below will follow closely that from Subsection~\ref{n2}. 
We start from an assumption, but it is worth observing that, 
from~\cite{Colombo2008,Wright1997}, this assumption holds if the step-size 
in the Newton direction is sufficiently small: 
   $\sszk \in \left[0, \min \left\{ 2^{3/2} \frac{1 - \gamma}{1 + \gamma}
    \frac{\sigma_k}{n}, 2^{3/2} \gamma \frac{1 - \gamma}{1 + \gamma}
    \frac{\sigma_k}{n} \right\} \right]$. 

\begin{assumption}
  \label{a:ns}
  Let $\gamma \in (0, 1)$ and $\xysk{k} \in \nN_s(\gamma)$. 
  Let the iterate after a step $\sszk$ in Newton direction 
  also satisfy $\xysk{k + 1} \in \nN_s(\gamma)$. 
\end{assumption}

Our main goal is to show that the next iterate obtained after 
a step in the quasi-Newton direction 
\[
  \xysk{k + 2} = \xysk{k + 1} + \bar \alpha \dxyskm1
\]
also belongs to $\nN_s(\gamma)$ if suitable step-sizes 
$\bar \alpha$ and $\bar \alpha_k$ are chosen. 
To demonstrate this, we will consider lower and upper bounds 
on the complementarity products in the $\nN_s(\gamma)$ neighborhood 
using two possible values of $\zeta \in \{\gamma, \frac{1}{\gamma}\}$.
We start the analysis from expanding the complementarity product 
at the quasi-Newton iteration
\begin{equation}
  \label{ns:term}
  \left[ (X^{k + 1} + \bar \alpha \dXkm1) 
         (Z^{k + 1} + \bar \alpha \dSkm1) e 
  \right]_i - \zeta \mu(\ssz),
\end{equation}
where $\ssz \in (0, 1)$ is the step-size associated with the quasi-Newton
direction. To guarantee that the new iterate belongs to $\nN_s(\gamma)$, 
for $\zeta = \gamma$ the expression in \eqref{ns:term} should 
be non-negative and for $\zeta = 1/\gamma$ it should be non-positive, 
for $i = 1, \dots, n$. 
Using~\eqref{singleProd} and Lemma~\ref{l:decmu3}, 
we rewrite the expression in~\eqref{ns:term} 
\begin{equation}
  \label{ns:e1}
  \begin{split}
    \left[ x^{k + 1} \right. & + \left. \bar \alpha \dxkm1 \right]_i
    \left[ z^{k + 1} + \bar \alpha \dskm1 \right]_i - \zeta \mu(\ssz) 
      \quad = \quad (1 - \ssz) (x_i^{k + 1} z_i^{k + 1} - \zeta \mu_{k + 1}) 
        + (1 - \zeta) \ssz \sigma_{k + 1} \mu_{k + 1} \\
    & \quad + \left[\sszk \dxk + \ssz \dxkm1 \right]_i
              \left[\sszk \dsk + \ssz \dskm1 \right]_i 
            - (1 + \gamma_1 \ssz) \sszk^2 [\dxk]_i [\dsk]_i.
  \end{split}
\end{equation}

To deliver the main result of this section we will need a bound 
for the quasi-Newton term $\gamma_1$ defined in Lemma~\ref{l:bbroyd} 
when the algorithm operates in the $\nN_s(\gamma)$ neighborhood.

\begin{lemma}
  \label{l:nsgbnd}
  Let $k + 1$ be a quasi-Newton iteration of
  Algorithm~\ref{alg:pdqnipm} operating in the $\nN_s(\gamma)$
  neighborhood.  Suppose that $v$ in Lemma~\ref{l:bbroyd} is given by
  the right-hand side of~\eqref{def:newtonsys} and
  Assumption~\ref{a:ns} holds. If $\sszk \in (0, 1]$
  and $\sigma_k \in [0, 1)$, then
  \[
    |\gamma_1| \le \frac{2 \sqrt{n}}{(1 - \sigma_k) \sszk \gamma}.
  \]
\end{lemma}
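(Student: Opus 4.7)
The plan is to follow the same blueprint as Lemma~\ref{l:n2gbnd}: apply the projection inequality~\eqref{projection} to get $|\gamma_1| \le \|v\|/\|y_k\|$, then bound the numerator using the $\nN_s(\gamma)$ neighborhood and the denominator using Lemma~\ref{l:ykbnd}.

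First I would bound $\|v\|$. Since $v$ is the right-hand side of~\eqref{def:newtonsys}, under Assumption~\ref{a:pdfeas} its first two blocks vanish and we are left with
\[
\|v\|^2 = \sum_{i=1}^n \bigl(\sigma_{k+1}\mu_{k+1} - x_i^{k+1} z_i^{k+1}\bigr)^2.
\]
Because $\xysk{k+1} \in \nN_s(\gamma)$ we have $\gamma \mu_{k+1} \le x_i^{k+1}z_i^{k+1} \le \mu_{k+1}/\gamma$, and with $\sigma_{k+1} \in [0,1)$ we obtain $|\sigma_{k+1}\mu_{k+1} - x_i^{k+1}z_i^{k+1}| \le \mu_{k+1}/\gamma$ for each $i$ (the larger side of the interval dominates since $1/\gamma > 1 > \sigma_{k+1}$). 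Hence
\[
\|v\| \le \frac{\sqrt{n}}{\gamma}\,\mu_{k+1}.
\]

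Next I would bound $\|y_k\|$ from below. Setting $\rho_k = 1 - \sigma_k$, the standard Newton step identity $\mu_{k+1} = (1 - \sszk(1-\sigma_k))\mu_k = (1 - \rho_k \sszk)\mu_k$ supplies exactly the sufficient-decrease hypothesis of Lemma~\ref{l:ykbnd}. Under $\sszk \in (0,1]$ and $\sigma_k \in [0,1)$ we get $\rho_k \sszk > 0$, so
\[
\|y_k\| \ge \frac{(1-\sigma_k)\sszk}{2}\,\mu_k.
\]

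Finally I would combine these bounds through~\eqref{projection}, using $\mu_{k+1} \le \mu_k$:
\[
|\gamma_1| \le \frac{\|v\|}{\|y_k\|} \le \frac{\sqrt{n}\,\mu_{k+1}/\gamma}{(1-\sigma_k)\sszk \mu_k/2} \le \frac{2\sqrt{n}}{(1-\sigma_k)\sszk\gamma},
\]
which is the claimed bound. The main (mild) obstacle is the componentwise estimate for $\|v\|$: one has to notice that in the symmetric neighborhood the pairs $x_i z_i$ may exceed $\mu$ (unlike in $\nN_2$ where the whole vector is close to $\mu e$), so the relevant bound is driven by the upper limit $\mu_{k+1}/\gamma$ rather than by a deviation-from-$\mu$ estimate; everything else is a direct transcription of the $\nN_2$ argument.
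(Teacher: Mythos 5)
Your proposal is correct and follows essentially the same route as the paper: bound $\|v\| \le \sqrt{n}\,\mu_{k+1}/\gamma$ using the upper part of the $\nN_s(\gamma)$ bounds, invoke Lemma~\ref{l:ykbnd} with $\rho_k = 1-\sigma_k$ via $\mu_{k+1} = (1-\sszk(1-\sigma_k))\mu_k$, and combine through~\eqref{projection}. The only cosmetic difference is that you obtain the numerator bound componentwise, whereas the paper expands $\|\sigma_{k+1}\mu_{k+1}e - X^{k+1}Z^{k+1}e\|^2$ and uses $e^T X^{k+1}Z^{k+1}e = n\mu_{k+1}$ before applying $\sum_i (x_i^{k+1}z_i^{k+1})^2 \le n\mu_{k+1}^2/\gamma^2$; both give the same intermediate estimate.
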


\begin{proof}
  Using the conditions of the lemma, the definition of $\mu$ and the
  fact that $\sigma_{k + 1} \in [0, 1]$, we have that
  \begin{equation*}
    \begin{split}
      \| v \| & = \| \sigma_{k + 1} \mu_{k + 1} e - X^{k + 1} Z^{k + 1} e \| 
                = \sqrt{\sigma_{k + 1}^2 \mu_{k + 1}^2 n 
                        - 2 \sigma_{k + 1} \mu_{k + 1}^2 n 
                        + \sum_{i = 1}^n (x_i^{k + 1} z_i^{k + 1})^2} \\
      & \le \sqrt{(1 / \gamma^2) \mu_{k + 1}^2 n 
                  - (2 - \sigma_{k + 1}) \sigma_{k + 1} \mu_{k + 1}^2 n} 
        \le \sqrt{(1/\gamma^2) - \sigma_{k + 1}} \sqrt{n} \mu_{k + 1} \\
      & \le \frac{\sqrt{n}}{\gamma} \mu_{k + 1} 
        = \frac{(1 - \sszk (1 - \sigma_k)) \sqrt{n}}{\gamma} \mu_k 
        \le \frac{\sqrt{n}}{\gamma} \mu_k.
    \end{split}
  \end{equation*}
  Since $\sszk \in (0, 1]$ and $\sigma_k \in [0, 1)$, by defining
  $\rho_k = 1 - \sigma_k > 0$ we can again use
  $\mu_{k + 1} = (1 - \sszk (1 - \sigma_k)) \mu_k$ and
  Lemma~\ref{l:ykbnd} to ensure that $\| y_k \| > 0$. Therefore,
  by~\eqref{projection}, Lemma~\ref{l:ykbnd} and the previous result,
  \[
    |\gamma_1|
    \le \frac{\| v \|}{\|y_k\|} 
    \le \frac{(1 - \sszk (1 - \sigma_k)) \sqrt{n} \mu_k}{\gamma} 
        \frac{2}{(1 - \sigma_k) \sszk \mu_k} 
    =   \frac{2 (1 - \sszk (1 - \sigma_k)) \sqrt{n}}{(1 - \sigma_k) \sszk \gamma} 
    \le \frac{2 \sqrt{n}}{(1 - \sigma_k) \sszk \gamma}.
  \]
\end{proof}

The next lemma delivers a bound for term
$\left[\sszk \dxk + \ssz \dxkm1 \right]_i \left[\sszk \dsk + \ssz \dskm1 \right]_i$ 
under Assumption~\ref{a:ns}. Most of the calculations have already been made 
in the proof of Lemma~\ref{l:qnstepbnd}. 
%
%
Let us mention that Lemmas~\ref{l:qnstepbnd} and~\ref{l:ns:qnstepbnd} can be 
viewed as the quasi-Newton versions of~\cite[Lemma 5.10]{Wright1997}.

\begin{lemma}
  \label{l:ns:qnstepbnd}
  Let $k + 1$ be a quasi-Newton iteration of Algorithm~\ref{alg:pdqnipm} 
  and suppose that Assumption~\ref{a:ns} holds. Then,
    \begin{equation*}
    \begin{split}
      \| \left( \right. & \left. \bar \alpha_k \dXk + \bar \alpha \dXkm1 \right) 
         \left( \bar \alpha_k \dSk + \bar \alpha \dSkm1 \right) e \| \le \\
      &  \le \frac{n \mu_k}{2^{3/2} \gamma} 
        \Bigg\{ %
          \left[ (\ssz + \sszk (1 - \ssz)) 
             + \frac{|1 + \gamma_1| \ssz \sszk^2}{2^{3/2}} \right]^2 
          \left( \frac{1 + \gamma}{\gamma} \right)^2 n 
             + [1 - (1 - \ssz (1 - \sigma_{k + 1})) (1 - \sszk (1 - \sigma_k))]^2 
        \Bigg\}.
  \end{split}
\end{equation*}
\end{lemma}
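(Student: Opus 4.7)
The plan is to mirror the derivation of Lemma~\ref{l:qnstepbnd} step by step, replacing the $\nN_2(\theta_k)$ estimates by the corresponding ones for the symmetric neighborhood $\nN_s(\gamma)$. The orthogonality and scaling arguments, together with the key identity~\eqref{l:combined:e}, carry over unchanged from the feasible case treated in Lemma~\ref{l:qnstepbnd}; only the final estimates on $\|\mu_k e - X^k Z^k e\|$ and on the Newton second-order error $\|\dXk \dSk e\|$ need to be reworked in $\nN_s(\gamma)$.

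I would begin by setting $D^k = (X^k)^{1/2}(Z^k)^{-1/2}$ and defining $u_k = (D^k)^{-1}(\sszk \dXk + \ssz \dXkm1)e$, $v_k = D^k(\sszk \dSk + \ssz \dSkm1)e$. Lemma~\ref{l:ortho} and Corollary~\ref{c:ortho} give $u_k^T v_k = 0$, so by the standard inequality $\|UVe\| \le 2^{-3/2}\|u+v\|^2$ used in the proof of Lemma~\ref{l:qnstepbnd}, it suffices to bound $\|u_k+v_k\|^2$. Pre-multiplying~\eqref{l:combined:e} by $(X^k Z^k)^{-1/2}$ and denoting $A_k = (\ssz+\sszk(1-\ssz))(\mu_k e - X^k Z^k e) - (1+\gamma_1)\ssz\sszk^2\dXk\dSk e$, the $\nN_s(\gamma)$ lower bound $x_i^k z_i^k \ge \gamma\mu_k$ leads to $\|u_k+v_k\|^2 \le \frac{1}{\gamma\mu_k}\|A_k + (\mu(\ssz)-\mu_k)e\|^2$.

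Next I would exploit the orthogonality $e^T A_k = 0$, which follows from $e^T(\mu_k e - X^k Z^k e)=0$ and $e^T \dXk \dSk e = (\dxk)^T \dsk = 0$ (Corollary~\ref{c:ortho}), to expand and obtain $\|A_k+(\mu(\ssz)-\mu_k)e\|^2 = \|A_k\|^2 + (\mu(\ssz)-\mu_k)^2 n$. Equation~\eqref{l:qnstepbnd:e4} converts the second summand into $[1 - (1-\ssz(1-\sigma_{k+1}))(1-\sszk(1-\sigma_k))]^2 \mu_k^2 n$, which produces the second term of the claim after dividing by $2^{3/2}\gamma\mu_k$. For $\|A_k\|^2$ I would apply the triangle inequality and then the $\nN_s(\gamma)$ bounds $\|\mu_k e - X^k Z^k e\| \le \frac{1+\gamma}{\gamma}n\mu_k$ (obtained coordinate-wise from $|x_i^k z_i^k - \mu_k|\le \frac{1-\gamma}{\gamma}\mu_k$, then relaxing via $\|\cdot\|_2 \le \sqrt{n}\|\cdot\|_\infty \le n\|\cdot\|_\infty$ and $1-\gamma \le 1+\gamma$) together with the Newton-error bound $\|\dXk \dSk e\| \le 2^{-3/2}\frac{1+\gamma}{\gamma}n\mu_k$ from Lemma~5.10 of~\cite{Wright1997}. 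Pulling out the common factor $n\mu_k(1+\gamma)/\gamma$ from both contributions and squaring yields $\|A_k\|^2 \le n^2\mu_k^2\left(\frac{1+\gamma}{\gamma}\right)^2\left[(\ssz+\sszk(1-\ssz)) + \frac{|1+\gamma_1|\ssz\sszk^2}{2^{3/2}}\right]^2$, and dividing by $2^{3/2}\gamma\mu_k$ gives exactly the first term of the claim.

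The main obstacle is bookkeeping rather than insight: one has to use the slightly loose symmetric bound $(1+\gamma)/\gamma$ in place of the tighter $(1-\gamma)/\gamma$ for the diagonal perturbation, and absorb a factor $\sqrt{n}\le n$, so that both contributions to $A_k$ carry the same prefactor $n\mu_k(1+\gamma)/\gamma$ and can be collected into a single square matching the form displayed in the lemma statement. Beyond this careful accounting, the argument is a direct transcription of the proof of Lemma~\ref{l:qnstepbnd} to the symmetric neighborhood.
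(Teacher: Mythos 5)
Your proposal is correct and follows essentially the same route as the paper's proof: the same scaled-orthogonality argument and identity~\eqref{l:combined:e}, the split via $e^T A_k = 0$, the use of~\eqref{l:qnstepbnd:e4} for the $(\mu(\ssz)-\mu_k)$ term, and the $\nN_s(\gamma)$ bounds on $\|\mu_k e - X^k Z^k e\|$ and $\|\dXk\dSk e\|$ with the factor $\sqrt{n}\le n$ absorbed to collect both contributions under the common prefactor $(1+\gamma)n\mu_k/\gamma$. The only cosmetic difference is that you derive $\|\mu_k e - X^k Z^k e\|\le \frac{1+\gamma}{\gamma}\sqrt{n}\,\mu_k$ coordinate-wise rather than via the sum-of-squares computation in~\eqref{l:ns:qnstepbnd:e2}; both are valid.
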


\begin{proof}
  We observe that many arguments used at the beginning of the proof 
  of Lemma~\ref{l:qnstepbnd} remain valid for the $\nN_s$ neighborhood. 
  (Only the bound $x_i^k z_i^k \geq (1 - \theta_k) \mu_k$ needs 
  to be replaced with $x_i^k z_i^k \geq \gamma \mu_k$.) 
  Then inequalities~(\ref{l:qnstepbnd:e2}) and~(\ref{l:qnstepbnd:e3})
  are replaced with the following
  \begin{equation}
    \label{l:ns:qnstepbnd:e1}
    \begin{split}
      \| \left( \bar \alpha_k \dXk \right. + & \left. \bar \alpha
        \dXkm1 \right) \left( \bar \alpha_k \dSk + \bar \alpha \dSkm1
      \right) e \| \le \\
      & \le 2^{-3/2} \sum_{i = 1}^n \frac{ %
        \left\{ (\ssz + \sszk (1 - \ssz)) \left(\mu_k - x^k_i z^k_i
          \right) + \mu(\ssz) - \mu_k - (1 + \gamma_1) \ssz \sszk^2
          [\dxk]_i [\dsk]_i \right\}^2 %
      }{x^k_i z^k_i} \\
      & \le \frac{1}{2^{3/2} \gamma \mu_k} \| (\ssz + \sszk (1 -
      \ssz)) \left(\mu_k e - X^k Z^k e \right) + (\mu(\ssz) - \mu_k) e
      - (1 + \gamma_1) \ssz \sszk^2 \dXk \dSk e \|^2 \\
      & = \frac{1}{2^{3/2} \gamma \mu_k} \left[\| (\ssz + \sszk (1 -
        \ssz)) \left( \mu_k e - X^k Z^k e \right) - (1 + \gamma_1)
        \ssz \sszk^2 \dXk \dSk e \|^2 + \| (\mu(\ssz) - \mu_k) e \|^2
      \right],
    \end{split}    
  \end{equation}
  where in the last equality we have used equation~\eqref{useful-result-2}.
  We already have the expression for $\| (\mu(\ssz) - \mu_k) e \|^2$ 
  (see~\eqref{l:qnstepbnd:e4}), but we need a bound for the first norm 
  in~\eqref{l:ns:qnstepbnd:e1}. We observe that
  \begin{equation}
    \label{l:ns:qnstepbnd:e2}
    \begin{split}
      \| \mu_k e - X^k Z^k e \| 
      & = \sqrt{\sum_{i = 1}^n (x_i^k z_i^k)^2 - 2 \mu_k^2 n + \mu_k^2 n} 
          \le \sqrt{ \frac{\mu_k^2 n}{\gamma^2} - \mu_k^2 n} 
          \le \frac{1}{\gamma} \sqrt{n} \mu_k 
          \le \frac{1 + \gamma}{\gamma} \sqrt{n} \mu_k.
    \end{split}
  \end{equation}
  Therefore, by~\eqref{l:ns:qnstepbnd:e2} and the bound on
  $\left\| \dXk \dSk e \right\|$ obtained in~\cite[Lemma
  5.10]{Wright1997} (which also holds for $\nN_s$~\cite{Colombo2008})
  \begin{equation}
    \label{l:ns:qnstepbnd:e3}
    \begin{split}
      \left\| (\ssz + \sszk (1 - \ssz)) \left(\mu_k e - X^k Z^k e
        \right) \right. - & \left. (1 + \gamma_1) \ssz \sszk^2 \dXk
        \dSk e \right\|^2 \le \\
      & \le \left[ (\ssz + \sszk (1 - \ssz)) \left\| \mu_k e - X^k Z^k
          e \right\| + |1 + \gamma_1| \ssz \sszk^2
        \left\| \dXk \dSk e \right\| \right]^2 \\
      & \le \left[ (\ssz + \sszk (1 - \ssz)) \left( \frac{1 +
            \gamma}{\gamma} \right) \sqrt{n} \mu_k + \frac{|1 +
          \gamma_1| \ssz \sszk^2}{2^{3/2}} \left( \frac{1 +
            \gamma}{\gamma} \right) \mu_k n \right]^2 \\
      & \le \left[ (\ssz + \sszk (1 - \ssz)) + \frac{|1 + \gamma_1|
          \ssz \sszk^2}{2^{3/2}} \right]^2 \left( \frac{1 +
          \gamma}{\gamma} \right)^2 n^2 \mu_k^2,
    \end{split}
  \end{equation}
  since $n \ge 1$. Using~\eqref{l:qnstepbnd:e4} and~\eqref{l:ns:qnstepbnd:e3} 
  in~\eqref{l:ns:qnstepbnd:e1}, we obtain the desired result. 
%
%
\end{proof}

Using~\eqref{ns:e1} and the bounds obtained so far, we now show that,
for sufficiently small step-sizes $\sszk$ and $\ssz$, in the Newton and
quasi-Newton iterations, respectively, the point $\xysk{k + 2}$ also
belongs to $\nN_s(\gamma)$. 
The upper bounds for the step-sizes delivered by the theorem below are 
then used to determine the $O(n^3)$ iteration worst-case complexity 
of Algorithm~\ref{alg:pdqnipm} operating in $\nN_s(\gamma)$.

\begin{theorem}
  \label{t:nsconv}
  Suppose that $k + 1$ is a quasi-Newton iteration of
  Algorithm~\ref{alg:pdqnipm} and Assumption~\ref{a:ns} holds. 
  Define
  \[
    l = \frac{\frac{1}{2} \sigma_{min}}
          {\frac{3}{2^{3/2} \gamma} \left( 2 + \frac{1}{\gamma (1 - \sigma_{max})} \right)^2 
          \left( \frac{1 + \gamma}{\gamma} \right)^2},
  \]
  where $0 < \sigma_{min} \le \sigma_k \le \sigma_{max} < 1$ 
  for all $k = 0,1,2, \dots$ If
  \begin{equation}
    \label{t:nsconv:bnds}
    \sszk \in \left(0, \frac{(1 - \gamma) l}{2 n^3} \right]\quad
    \text{and}\quad \ssz \in \left[2 \sszk, \frac{(1 - \gamma) l}{n^3}
    \right]
  \end{equation}
  then
  $\gamma \mu(\ssz) \le x^{k + 2}_i z^{k + 2}_i \le (1 / \gamma)
    \mu(\ssz)$ for all $i = 1, \dots, n$.
  If, in addition, $\gamma \ge \sigma_{min} / 4$, then
    $\xysk{k + 2} \in \nN_s(\gamma)$.
\end{theorem}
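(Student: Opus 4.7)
The overall plan is to bound expression~\eqref{ns:e1} from below when $\zeta = \gamma$ and from above when $\zeta = 1/\gamma$, and to exploit the centering contribution $(1-\zeta)\ssz\sigma_{k+1}\mu_{k+1}$ (which is non-negative in the first case and non-positive in the second) to absorb all quasi-Newton error terms. The first term on the right of~\eqref{ns:e1}, $(1-\ssz)(x_i^{k+1}z_i^{k+1}-\zeta\mu_{k+1})$, already has the correct sign by Assumption~\ref{a:ns}, so the only work is to bound in absolute value the composite direction term $\left[\sszk \dxk + \ssz \dxkm1\right]_i\left[\sszk \dsk + \ssz \dskm1\right]_i$ by Lemma~\ref{l:ns:qnstepbnd}, and the Newton correction term $(1+\gamma_1\ssz)\sszk^2[\dxk]_i[\dsk]_i$ by the standard bound $\|\dXk\dSk e\| \le n\mu_k/(2^{3/2}\gamma)$ from \cite[Lemma 5.10]{Wright1997} combined with the estimate on $|\gamma_1|$ in Lemma~\ref{l:nsgbnd}.

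The key substitution is to plug the $|\gamma_1|$ bound $|\gamma_1|\le 2\sqrt{n}/((1-\sigma_k)\sszk\gamma)$ into Lemma~\ref{l:ns:qnstepbnd} and into the term $|1+\gamma_1\ssz|\sszk^2\|\dXk\dSk e\|$, and exploit $\sszk\le\ssz/2$ from~\eqref{t:nsconv:bnds}. Then $|\gamma_1|\ssz\sszk^2\le 2\ssz\sszk\sqrt{n}/((1-\sigma_{\max})\gamma)$, and $(\ssz+\sszk(1-\ssz))\le 2\ssz$; substituting these into the braces in Lemma~\ref{l:ns:qnstepbnd} produces, after routine but careful bookkeeping, a bound of the form $C(\gamma,\sigma_{\max})\, \ssz^2 n^3 \mu_k$ with $C(\gamma,\sigma_{\max}) = \frac{3}{2^{3/2}\gamma}\bigl(2+\frac{1}{\gamma(1-\sigma_{\max})}\bigr)^2\bigl(\frac{1+\gamma}{\gamma}\bigr)^2$, which is exactly the constant that appears in the denominator of $l$. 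The bound on the fourth term is obtained analogously and is of the same order, so both error contributions together are at most $C(\gamma,\sigma_{\max})\ssz^2 n^3 \mu_k$ up to harmless constants absorbed into the factor $3/(2^{3/2}\gamma)$.

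The remaining step compares this $\ssz^2$ error with the linear-in-$\ssz$ centering contribution. In the lower-bound case the centering term is $(1-\gamma)\ssz\sigma_{k+1}\mu_{k+1}\ge (1-\gamma)\ssz\sigma_{\min}\mu_{k+1}$; in the upper-bound case $(1-1/\gamma)\ssz\sigma_{k+1}\mu_{k+1}\le -(1-\gamma)\ssz\sigma_{\min}\mu_{k+1}/\gamma$, which is even more negative. In both cases, substituting $\ssz\le(1-\gamma)l/n^3$ from~\eqref{t:nsconv:bnds} yields $C(\gamma,\sigma_{\max})\ssz n^3 \le (1-\gamma)\sigma_{\min}/2$, so the error is at most half of the centering slack; since $\mu_{k+1}\ge\mu_k(1-\sszk(1-\sigma_{\min}))$ contributes only a harmless constant close to $1$, this closes the sign argument componentwise. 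Finally, applying $e^T$ and dividing by $n$ gives $\mu(\ssz) > 0$, so the required pointwise inequalities $\gamma\mu(\ssz)\le x^{k+2}_i z^{k+2}_i \le \mu(\ssz)/\gamma$ are established.

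For the claim $\xysk{k+2}\in\nN_s(\gamma)$, primal-dual equality-feasibility is preserved by Assumption~\ref{a:pdfeas} together with Lemma~\ref{l:bbroyd} exactly as in the proof of Theorem~\ref{t:n2conv}; it only remains to verify the strict positivity $x^{k+2}>0$, $z^{k+2}>0$. Arguing as in~\cite{Monteiro1989}, if some component became non-positive then (since we have already shown $x^{k+2}_i z^{k+2}_i\ge\gamma\mu(\ssz)>0$) both $x^{k+2}_i$ and $z^{k+2}_i$ would be strictly negative, which combined with the step-size bound~\eqref{t:nsconv:bnds} and the error estimate $T_3+T_4\le C(\gamma,\sigma_{\max})\ssz^2 n^3\mu_k$ forces $x^k_i z^k_i<(1-\gamma)\sigma_{\min}\mu_k/2$; since $\xysk{k}\in\nN_s(\gamma)$ gives $x^k_i z^k_i\ge\gamma\mu_k$, this yields $\gamma<\sigma_{\min}/4$, contradicting the hypothesis. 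The main obstacle in the whole argument is the careful tracking of constants in Lemma~\ref{l:ns:qnstepbnd} once $|\gamma_1|\propto 1/\sszk$ from Lemma~\ref{l:nsgbnd} is substituted — the cancellation that turns the apparently dangerous $\sszk^{-1}$ factor into a harmless $\sszk/\ssz\le 1/2$ factor is exactly what forces the asymmetric step-size constraint $\ssz\ge 2\sszk$ in~\eqref{t:nsconv:bnds}, and is what ultimately yields the $n^3$ rather than a higher-degree complexity bound.
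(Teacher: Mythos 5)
Your proposal is correct and follows essentially the same route as the paper's proof: the same decomposition of~\eqref{ns:e1} for $\zeta=\gamma$ and $\zeta=1/\gamma$, the same use of Lemmas~\ref{l:nsgbnd} and~\ref{l:ns:qnstepbnd} with the $\ssz\ge 2\sszk$ condition to neutralize the $\sszk^{-1}$ factor in the $|\gamma_1|$ bound, the same $K_1\ssz-K_2\ssz^2$ sign argument producing the constant $l$, and the same contradiction argument for strict positivity. The only caveat is a small constant-chasing slack in the final positivity step (your chain as written gives $\gamma<\sigma_{\min}/2$ rather than $\gamma<\sigma_{\min}/4$; as in the paper, one also needs $\ssz\le 1/2$, which follows from the step-size bound, to recover the factor $1/4$).
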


\begin{proof}
  By construction we guarantee that $\ssz \ge 2 \sszk$ as needed 
  in~\eqref{t:nsconv:bnds}.
  We start the proof by setting $\xi = \gamma$ in~\eqref{ns:e1} and
  showing that
  $\left[ x^{k + 1} + \bar \alpha \dxkm1 \right]_i 
   \left[ z^{k + 1} + \bar \alpha \dskm1 \right]_i - \gamma \mu(\ssz) \ge 0$ 
  for sufficiently small step-sizes. By~\eqref{ns:e1}, using
  Assumption~\ref{a:ns}, Lemma~\ref{l:ns:qnstepbnd} 
  and~\cite[Lemma 5.10]{Wright1997}, we obtain 
  \begin{equation}
    \label{t:nsconv:e1}
    \begin{split}
      \left[ x^{k + 1} \right. & + \left. \bar \alpha \dxkm1 \right]_i
      \left[ z^{k + 1} + \bar \alpha \dskm1 \right]_i - \gamma
      \mu(\ssz) = \\
      & = (1 - \ssz) (x_i^{k + 1} z_i^{k + 1} - \gamma \mu_{k + 1}) +
      (1 - \gamma) \ssz \sigma_{k + 1} \mu_{k + 1} \\
      & \quad + \left[\sszk \dxk + \ssz \dxkm1 \right]_i \left[\sszk
        \dsk + \ssz \dskm1 \right]_i - (1 + \gamma_1 \ssz) \sszk^2
      [\dxk]_i [\dsk]_i \\
      & \ge (1 - \gamma) \ssz \sigma_{k + 1} (1 - \sszk (1 - \sigma_k)) \mu_k 
        - (1 + \gamma_1 \ssz) \sszk^2 
        \left( \frac{1 \! + \! \gamma}{\gamma} \right) 
        \frac{n}{2^{3/2}} \mu_k \\
      & \quad - \frac{n}{2^{3/2} \gamma} 
        \left\{ \left[ \ssz + \sszk (1 \! - \! \ssz) 
                       + \frac{|1 \! + \! \gamma_1| \ssz \sszk^2}{2^{3/2}} \right]^2 
        \left( \frac{1 \! + \! \gamma}{\gamma} \right)^2 n 
        + [1 - (1 - \ssz (1 \! - \! \sigma_{k + 1})) 
          (1 \! - \! \sszk (1 \! - \! \sigma_k))]^2 \right\} \mu_k \\
      & = \mu_k \Bigg\{ (1 - \gamma) \ssz \sigma_{k + 1} (1 - \sszk (1 - \sigma_k)) 
        - (1 + \gamma_1 \ssz) \sszk^2 \left( \frac{1 + \gamma}{\gamma} \right) \frac{n}{2^{3/2}} \\
      & \quad - \left[ \ssz + \sszk (1 \! - \! \ssz) + 
        \frac{|1 + \gamma_1| \ssz \sszk^2}{2^{3/2}} \right]^2 
        \left( \frac{1 \! + \! \gamma}{\gamma} \right)^2 \frac{n^2}{2^{3/2} \gamma} 
        - [1 \! - \! (1 \! - \! \ssz (1 \! - \! \sigma_{k + 1})) 
         (1 \! - \! \sszk (1 \! - \! \sigma_k))]^2 \frac{n}{2^{3/2} \gamma} \Bigg\}.
    \end{split}
  \end{equation}
  We will rearrange this expression and represent it in a form 
  $K_1 \bar \alpha - K_2 {\bar \alpha}^2$ with $K_1, K_2 > 0$. 
  Next, we will show that ~\eqref{t:nsconv:e1} is non-negative 
  for sufficiently small values of $\sszk$ and $\ssz$. 
  To deliver the desired results every term inside the curly 
  brackets in~\eqref{t:nsconv:e1} will be bounded. 
  Since $\sszk \le \frac{1}{2} \ssz \le \frac{1}{2} $, we have 
  $1 - \sszk (1 - \sigma_k) \ge 1 - \frac{1}{2} (1 - \sigma_k) \ge \frac{1}{2}$ 
  hence for the first term in~\eqref{t:nsconv:e1} we obtain 
  \begin{equation}
    \label{t:nsconv:e2}
    (1 - \gamma) \ssz \sigma_{k + 1} (1 - \sszk (1 - \sigma_k)) 
    \ge \frac{1}{2} [(1 - \gamma) \sigma_{k + 1}] \ssz.
  \end{equation}
  Using condition $\ssz \ge 2 \sszk$ (guaranteed by~\eqref{t:nsconv:bnds}),
  $n \ge 1$ and Lemma~\ref{l:nsgbnd}, the second term becomes
  \begin{equation} \label{t:nsconv:e4}
    \begin{split}
      (1 + \gamma_1 \ssz) \sszk^2 \left( \frac{1 + \gamma}{\gamma}
      \right) \frac{n}{2^{3/2}} & \le (\sszk^2 + |\gamma_1| \ssz
      \sszk^2) \left( \frac{1 + \gamma}{\gamma} \right)
      \frac{n}{2^{3/2}} \le \left(\sszk^2 + \frac{2 \sqrt{n}}{\gamma
          (1 - \sigma_k)} \ssz \sszk \right) \left( \frac{1 +
          \gamma}{\gamma} \right) \frac{n}{2^{3/2}} \\
      & \le \left(\frac{\ssz^2}{4} + \frac{\sqrt{n}}{\gamma (1 -
          \sigma_k)} \ssz^2 \right) \left( \frac{1 + \gamma}{\gamma}
      \right) \frac{n}{2^{3/2}} = \left(\frac{1}{4} +
        \frac{\sqrt{n}}{\gamma (1 - \sigma_k)} \right) \left( \frac{1
          + \gamma}{\gamma} \right) \frac{n}{2^{3/2}} \ssz^2 \\
      & \le \left(2 +
        \frac{1}{\gamma (1 - \sigma_k)} \right) \left( \frac{1
          + \gamma}{\gamma} \right) \frac{n \sqrt{n} }{2^{3/2} \gamma} \ssz^2.
    \end{split}
  \end{equation}
  By applying condition $\ssz \ge 2 \sszk$ again, we conclude that
  \begin{equation}
    \label{t:nsconv:e3}
    \ssz + \sszk (1 - \ssz) = \ssz + \sszk - \ssz \sszk 
    \le \ssz + \frac{1}{2} \ssz - \ssz \sszk 
    \le \frac{3}{2} \ssz
  \end{equation}
  and, using~\eqref{t:nsconv:e3} and the same arguments as before, 
  for the third term in \eqref{t:nsconv:e1} we obtain
  \begin{equation}
    \label{t:nsconv:e5}
    \begin{split}
      \left[ \ssz + \sszk (1 - \ssz) + \frac{|1 + \gamma_1| \ssz
          \sszk^2}{2^{3/2}} \right]^2 \left( \frac{1 + \gamma}{\gamma}
      \right)^2 \frac{n^2}{2^{3/2} \gamma} & \le \left[ \frac{3}{2}
        \ssz + \left(1 + \frac{2 \sqrt{n}}{\gamma (1 - \sigma_k)
            \sszk} \right) \frac{\ssz \sszk^2}{2^{3/2}} \right]^2
      \left( \frac{1 + \gamma}{\gamma} \right)^2 \frac{n^2}{2^{3/2}
        \gamma} \\
      & \le \left( \frac{3}{2} + \frac{1}{2} + \frac{\sqrt{n}}{\gamma
          (1 - \sigma_k)} \right)^2 \left( \frac{1 +
          \gamma}{\gamma} \right)^2 \frac{n^2}{2^{3/2} \gamma} \ssz^2 \\
      & \le \left( 2 + \frac{1}{\gamma (1 - \sigma_k)} \right)^2
      \left( \frac{1 + \gamma}{\gamma} \right)^2 \frac{n^3}{2^{3/2}
        \gamma} \ssz^2.
    \end{split}
  \end{equation}
  In a similar fashion, the last term of~\eqref{t:nsconv:e1} can be
  bounded as
  \begin{equation}
    \label{t:nsconv:e6}
    \begin{split}
      [1 \! - \! (1 \! - \! \ssz (1 \! - \! \sigma_{k + 1})) 
      & (1 - \sszk (1 - \sigma_k))]^2 \frac{n}{2^{3/2} \gamma} 
        = [1 \! - \! (1 \! - \! \sszk (1 \! - \! \sigma_k)) + \ssz (1 \! - \! \sigma_{k + 1})
          (1 \! - \! \sszk (1 \! - \! \sigma_k)) ]^2 \frac{n}{2^{3/2} \gamma} \\
      & = [\sszk (1 - \sigma_k) + \ssz (1 - \sigma_{k + 1}) (1 -
      \sszk (1 - \sigma_k)) ]^2 \frac{n}{2^{3/2} \gamma} \\
      & \le \left[\frac{(1 - \sigma_k)}{2} 
            + (1 - \sigma_{k + 1}) (1 - \sszk (1 - \sigma_k)) \right]^2 
              \frac{n}{2^{3/2} \gamma} \ssz^2 
        \quad \le \quad 4 \frac{n}{2^{3/2} \gamma} \ssz^2.
    \end{split}
  \end{equation}
  Since $n \ge 1$ and assuming without loss of generality that
  $\left( 2 + \frac{1}{\gamma (1 - \sigma_k)} \right) \left( \frac{1 +
      \gamma}{\gamma} \right) \ge 4$, using~\eqref{t:nsconv:e2},
  \eqref{t:nsconv:e4}, \eqref{t:nsconv:e5} and~\eqref{t:nsconv:e6}
  in~\eqref{t:nsconv:e1} we have that
  \begin{equation*}
    \begin{split}
      \left[ x^{k + 1} + \bar \alpha \dxkm1 \right]_i 
      & \left[ z^{k + 1} + \bar \alpha \dskm1 \right]_i - \gamma \mu(\ssz) \ge \\
      & \ge \Bigg\{ \frac{1}{2} [(1 - \gamma) \sigma_{k + 1}] \ssz 
            - 3 \left( 2 + \frac{1}{\gamma (1 - \sigma_k)} \right)^2 
            \left( \frac{1 + \gamma}{\gamma} \right)^2 \frac{n^3}{2^{3/2} \gamma} \ssz^2 \Bigg\} \mu_k \\
      & \ge \Bigg\{ \frac{1}{2} [(1 - \gamma) \sigma_{min}] \ssz
            - 3 \left( 2 + \frac{1}{\gamma (1 - \sigma_{max})} \right)^2 
            \left( \frac{1 + \gamma}{\gamma} \right)^2 \frac{n^3}{2^{3/2} \gamma} \ssz^2 \Bigg\} \mu_k.
    \end{split}
  \end{equation*}
  Therefore, in order to guarantee that 
  $\left[ x^{k + 1} + \bar \alpha \dxkm1 \right]_i \left[ z^{k + 1} +
    \bar \alpha \dskm1 \right]_i \ge \gamma \mu(\ssz)$ 
  for $i = 1, \dots, n$, it is sufficient that the quasi-Newton step-size
  $\ssz$ satisfies 
  \begin{equation}
    \label{t:nsconv:e7}
    \ssz \le \frac{\frac{1}{2} (1 - \gamma) \sigma_{min}}
             {\frac{3}{2^{3/2} \gamma} 
                   \left( 2 + \frac{1}{\gamma (1 - \sigma_{max})} \right)^2 
                   \left( \frac{1 + \gamma}{\gamma} \right)^2 n^3} 
          = \frac{(1 - \gamma) l}{n^3}.
  \end{equation}
  We now set $\xi = 1/\gamma$ in~\eqref{ns:e1}. In order to show that
  the resulting expression is non-positive, we use the same arguments
  as before, that is, Lemma~\ref{l:ns:qnstepbnd}   
  and equations~\eqref{t:nsconv:e2}, \eqref{t:nsconv:e4}, \eqref{t:nsconv:e5} 
  and~\eqref{t:nsconv:e6} to obtain
  \begin{equation}
    \label{t:nsconv:e8}
    \begin{split}
      \left[ x^{k + 1} \right. & + \left. \bar \alpha \dxkm1 \right]_i
      \left[ z^{k + 1}
        + \bar \alpha \dskm1 \right]_i - (1/\gamma) \mu(\ssz) = \\
      & = (1 - \ssz) (x_i^{k + 1} z_i^{k + 1} - (1/\gamma) \mu_{k +
        1}) + (1 - (1/\gamma)) \ssz \sigma_{k + 1} \mu_{k + 1}\\
      & \quad + \left[\sszk \dxk + \ssz \dxkm1 \right]_i \left[\sszk
        \dsk + \ssz \dskm1 \right]_i - (1 + \gamma_1 \ssz) \sszk^2
      [\dxk]_i [\dsk]_i \\
      & \le (1 - (1/\gamma)) \ssz \sigma_{k + 1} \mu_{k + 1} +
      \left[\sszk \dxk + \ssz \dxkm1 \right]_i \left[\sszk \dsk + \ssz
        \dskm1 \right]_i + |1 + \gamma_1 \ssz| \sszk^2
      | [\dxk]_i [\dsk]_i | \\
      & \le (1 - (1/\gamma)) \ssz \sigma_{k + 1} (1 - \sszk (1 -
      \sigma_k)) \mu_k + |1 + \gamma_1 \ssz| \sszk^2 \left(
        \frac{1 + \gamma}{\gamma} \right) \frac{n}{2^{3/2}} \mu_k \\
      & \quad + \frac{n}{2^{3/2} \gamma} 
        \left\{ \left[ \ssz \! + \! \sszk (1 \! - \! \ssz) 
                \! + \! \frac{|1 \! + \! \gamma_1| \ssz \sszk^2}{2^{3/2}} \right]^2 
        \left( \frac{1 \! + \! \gamma}{\gamma} \right)^2 n 
        + [1 \! - \! (1 \! - \! \ssz (1 \! - \! \sigma_{k + 1})) 
                     (1 \! - \! \sszk (1 \! - \! \sigma_k))]^2 \right\} \mu_k \\
      & = \mu_k \Bigg\{ |1 + \gamma_1 \ssz| \sszk^2 \left( \frac{1 +
          \gamma}{\gamma} \right) \frac{n}{2^{3/2}} + \left[ \ssz +
        \sszk (1 - \ssz) + \frac{|1 + \gamma_1| \ssz \sszk^2}{2^{3/2}}
      \right]^2 \left( \frac{1 +
          \gamma}{\gamma} \right)^2 \frac{n^2}{2^{3/2} \gamma} \\
      & \hspace{1.2cm} + [1 - (1 - \ssz (1 - \sigma_{k + 1})) (1 -
      \sszk (1 - \sigma_k))]^2 \frac{n}{2^{3/2} \gamma} - \left( \frac{1 -
        \gamma}{\gamma} \right) \ssz \sigma_{k + 1} (1 - \sszk (1 - \sigma_k))
      \Bigg\} \\
      & \le \mu_k \left\{ 3 \left( 2 + \frac{1}{\gamma (1 - \sigma_k)}
        \right)^2 \left( \frac{1 + \gamma}{\gamma} \right)^2
        \frac{n^3}{2^{3/2} \gamma} \ssz^2 - \frac{1}{2} \sigma_{k + 1} 
            \left( \frac{1 - \gamma}{\gamma} \right) \ssz
      \right\} \\
      & \le \mu_k \left\{ 3 \left( 2 + \frac{1}{\gamma (1 - \sigma_{max})}
        \right)^2 \left( \frac{1 + \gamma}{\gamma} \right)^2
        \frac{n^3}{2^{3/2} \gamma} \ssz^2 - \frac{1}{2} \sigma_{min} 
            \left( \frac{1 - \gamma}{\gamma} \right) \ssz
      \right\}.
    \end{split}
  \end{equation}
  Therefore, if
  \begin{equation}
    \label{t:nsconv:e9}
    \ssz \le \frac{\left( \dfrac{1 - \gamma}{\gamma} \right) l}{n^3},
  \end{equation}
  then by~\eqref{t:nsconv:e8} we have that
  $\left[ x^{k + 1} + \bar \alpha \dxkm1 \right]_i 
   \left[ z^{k + 1} + \bar \alpha \dskm1 \right]_i 
   \le (1/\gamma) \mu(\ssz)$ for all $i = 1, \dots, n$. 
  Observe that bound~\eqref{t:nsconv:e7} is tighter than ~\eqref{t:nsconv:e9}
  because $\gamma \in (0,1)$ hence~\eqref{t:nsconv:e7} appears 
  as an upper bound on $\bar \alpha$ in~\eqref{t:nsconv:bnds}. 
  %
  %
  The remaining bounds in~\eqref{t:nsconv:bnds} are consistent with 
  the need to satisfy $\ssz \geq 2 \sszk$.  
  %

    %
    It remains to show that $x^{k + 2}$ and $z^{k + 2}$ are strictly
    positive. Similarly to Theorem~\ref{t:n2conv}, if, by
    contradiction, $x_i^{k + 2} \le 0$ or $z_i^{k + 2} \le 0$ for some
    $i$, then we must have $x_i^{k + 2} < 0$ and $z_i^{k + 2} < 0$. On
    one hand, we already know that $x^k_i z^k_i \ge \gamma \mu_k$, by
    Assumption~\ref{a:ns}. Hence, by
    Lemma~\ref{l:ns:qnstepbnd}, \eqref{t:nsconv:e5},
    \eqref{t:nsconv:e6} and similar arguments to those used in this proof
    \begin{equation*}
      \begin{split}
        x^k_i z^k_i & <
        \left[ \sszk \dxk + \ssz \dxkm1
        \right]_i \left[ \sszk \dsk + \ssz \dskm1 \right]_i
        \le \| \left( \bar \alpha_k \dXk + \bar
          \alpha \dXkm1 \right) \left( \bar \alpha_k \dSk + \bar
          \alpha \dSkm1 \right) e \| \\
        & \le \frac{3}{2^{3/2} \gamma} 
              \left(2 + \frac{1}{\gamma (1 - \sigma_{max})} \right)^2 
              \left( \frac{1 + \gamma}{\gamma} \right)^2 \ssz^2 \mu_k n^3 
          = \frac{\sigma_{min} \ssz^2 \mu_k n^3}{2 l}.
      \end{split}
    \end{equation*}
    Hence we conclude that $\gamma < \sigma_{min} n^3 \ssz^2 / (2 l)$
    which, together with condition $\gamma \ge \sigma_{min} / 4$
    and~\eqref{t:nsconv:bnds}, implies the following absurd
    \[
      \frac{\sigma_{min}}{4} \le \gamma < \frac{\sigma_{min} n^3
        \ssz^2}{2 l} \le \frac{\sigma_{min} n^3 \ssz}{2 l} \le
      \frac{\sigma_{min} (1 - \gamma)}{4} \le \frac{\sigma_{min}}{4}.
    \]
    By Assumption~\ref{a:pdfeas}, we have that $\xysk{k + 2}$ belongs
    to $\F$, hence we conclude that it belongs to $\nN_s(\gamma)$.
\end{proof}

To deliver the worst-case polynomial complexity, we follow the same 
arguments as those presented after the proof of Theorem~\ref{t:n2conv} 
and consider only Newton iterations which are simpler to analyze. 
By~\eqref{t:nsconv:bnds}, we have that
\[
  \sszk \ge \frac{(1 - \gamma) l}{2 n^3}
\]
and, by~\cite[Lemma 5.1]{Wright1997},
\[
  \mu_{k + 1} = (1 - \sszk (1 - \sigma_k)) \mu_k \le \left(1 -
    \frac{(1 - \gamma) l}{2 n^3} \right) \mu_k, \quad k = 0, 2, 4,
  \dots,
\]
from which the convergence with the worst-case iteration 
complexity of $O(n^3)$ can be established easily.

  Because of the close relation of the symmetric
  neighborhood and the $\nN_{-\infty}(\gamma)$ neighborhood, one
  should expect similar complexity results to that of
  Theorem~\ref{t:nsconv}. However, if the $\nN_s(\gamma)$ neighborhood
  is not applied in Lemma~\ref{l:nsgbnd}, then a naive approach is to
  bound $\sum_{i = 1}^n \left( x^{k + 1}_i z^{k + 1}_i \right)^2$ by
  $\mu_{k + 1}^2 n^2$. Such approach would increase the worst-case
  iteration complexity to $O(n^4)$, as the degree would be increased
  in equation~\eqref{t:nsconv:e5}. A different approach to reduce the
  worst-case polynomial degree when working with the
  $\nN_{-\infty}(\gamma)$ neighborhood is subject to future
  developments.


%
%
%

\section{Worst-case complexity in the infeasible case} 
\label{infeas}

In this section, we analyze worst-case iteration complexity for linear
programming problems without assuming feasibility of the starting point. 
The analysis uses some ideas developed in Section~\ref{feasible} and 
follows~\cite[Chapter 6]{Wright1997}, by adding extra requirements 
on the computation of step-size $\sszk$. The proofs are modified 
to consider the symmetric neighborhood and admit the quasi-Newton steps.

We start by defining $r_b^k$ and $r_c^k$ to be the primal and dual 
infeasibility vectors at point $\xysk{k}$ which appear in the right-hand 
side of equation~\eqref{def:newtonsys}: 
\[
  r_b^k = A x^k - b \quad \text{and} \quad r_c^k = A^T \lambda^k + z^k - c.
\]
Then, given an initial guess $\xysk{0}$ and parameters $\beta \ge 1$
and $\gamma \in (0, 1)$, the infeasible version of the symmetric
neighborhood is defined as follows
\[
  \nN_s(\gamma, \beta) \doteq \left\{ (x, \lambda, z)\ \Big|\ \ \|(r_b,
    r_c)\| \le \frac{\|(r_b^0, r_c^0)\|}{\mu_0} \beta \mu, x > 0, z >
    0\ \text{and}\ \gamma \mu \le x_i z_i \le \frac{1}{\gamma} \mu, i
    = 1,\dots, n \right\}.
\]
In order to obtain complexity results, $\sszk$ needs to be computed in
such a way that the new iterate remains in $\nN_s(\gamma, \beta)$ and
a sufficient decrease condition for $\mu_k$ is satisfied. More
precisely, given $\alphadec \in (0, 1)$, $\sszk$ is the largest value
in $[0, 1]$ (or a fixed fraction of it) such that
\begin{equation} \label{t:inf:sufdec} %
  \xysk{k} + \sszk \dxysk
  \in \nN_s(\gamma, \beta) \quad \text{and} \quad \mu(\sszk) \le (1 -
  \alphadec \sszk) \mu_k.
\end{equation}
The goal of this section is to show that, assuming that one
quasi-Newton step is performed from a point in $\nN_s(\gamma, \beta)$
by Algorithm~\ref{alg:pdqnipm}, then conditions~\eqref{t:inf:sufdec}
are satisfied by the new point. When only Newton steps are taken, it
is shown in~\cite[Lemma~6.7]{Wright1997} that there is an interval
$[0, \hat \alpha]$ such that~\eqref{t:inf:sufdec} holds for the
$\nN_{-\infty}(\gamma, \beta)$ neighborhood. This is the case if 
a special starting point is used and $\hat \alpha \ge \bar \delta / n^2$,
where $\bar \delta$ is a constant independent of $n$. In
Lemma~\ref{l:inf:symmetric}, we extend those results in order to
ensure that the iterates belong to $\nN_s(\gamma, \beta)$ when only
Newton steps are taken.

First, let us recall some results from~\cite{Wright1997} 
that will be used frequently. We define the scalar
$\displaystyle \nu_k = \prod_{i = 0}^{k - 1} (1 - \ssz_i)$, 
which allows us to write vectors $r_b^k$ and $r_c^k$ as
$r_b^k = \nu_k r_b^0$ and $r_c^k = \nu_k r_c^0$, respectively. 
The parameter $\sigma_k$ used in~\eqref{def:qnsys} satisfies 
$0 < \sigma_{min} \le \sigma_k \le \sigma_{max} < 1$ for all
$k$. We also assume that a special initial point is used 
in Algorithm~\ref{alg:pdqnipm}, given by
\begin{equation} 
  \label{t:infinit} %
  \xysk0 = \begin{bmatrix} \xi e & 0 & \xi e \end{bmatrix}^T,
\end{equation}
where $\xi$ is such that $\| (x^*, z^*) \|_\infty \le \xi$, 
for some primal-dual solution $\xysk*$.

Let $k$ be a Newton iteration, $\xysk{k} \in \nN_s(\gamma, \beta)$ 
and $D^k = (X^k)^{1/2} (Z^k)^{-1/2}$. Also, let $\omega = 9 \beta / \gamma^{1/2}$ 
be a constant independent of $n$. When~\eqref{t:infinit} is used 
as the starting point, the following bounds hold:
\begin{align}
  \nu_k \| (x^k, z^k) \|_1 & \le \frac{4 \beta}{\xi} n \mu_k \label{t:inf:xz1bnd} \\
  \mu_0 & = \xi^2 \label{t:inf:mu0} \\
  \|(D^k)^{-1} \dxk \| & \le \omega n \mu_k^{1/2} 
  \quad \text{and} \quad 
  \|D^k \dsk \|  \le \omega n \mu_k^{1/2}. \label{t:inf:dxdzbnd}
\end{align}
The proofs for these bounds can be found in Lemmas~6.4 and~6.6
of~\cite{Wright1997}, respectively.

We start the analysis from looking at Newton step in $\nN_s(\gamma, \beta)$ 
neighborhood. 
\begin{lemma}
  \label{l:inf:symmetric}
  If $\xysk{k} \in \nN_s(\gamma, \beta)$ and the Newton step is taken
  at iteration $k$, then there exists a constant $\bar {\bar \delta}$ 
  independent of $n$ and a value $\hat \alpha \ge \bar {\bar \delta} / n^2$, 
  independent of $k$, such that for all $\sszk \in [0, \hat \alpha]$
  \begin{align}
    (1 - \sszk) {x^k}^T z^k 
    & \le (x^k + \sszk \dxk)^T(z^k + \sszk \dsk) 
      \le (1 - \alphadec \sszk) {x^k}^T z^k \label{l:inf:sym:e0} \\ 
    \gamma \mu(\sszk)
    & \le [x^k + \sszk \dxk]_i [z^k + \sszk \dsk]_i 
      \le \frac{1}{\gamma} \mu(\sszk) \label{l:inf:sym:e1}
  \end{align}
\end{lemma}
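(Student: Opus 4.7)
The plan is to mimic~\cite[Lemma 6.7]{Wright1997}, which establishes the analogous statement for the one-sided $\nN_{-\infty}(\gamma,\beta)$ neighborhood, and upgrade the argument so that both the lower \emph{and} the upper complementarity bounds imposed by the symmetric neighborhood $\nN_s(\gamma,\beta)$ are preserved. At iteration $k$ the Newton step gives the component-wise and aggregate identities
\[
  [x^k + \sszk \dxk]_i [z^k + \sszk \dsk]_i = (1 - \sszk) x_i^k z_i^k + \sszk \sigma_k \mu_k + \sszk^2 [\dxk]_i [\dsk]_i,
\]
\[
  \mu(\sszk) = (1 - \sszk (1 - \sigma_k)) \mu_k + \tfrac{\sszk^2}{n} (\dxk)^T \dsk.
\]
Writing $[\dxk]_i [\dsk]_i = [(D^k)^{-1}\dxk]_i \, [D^k \dsk]_i$ and using~\eqref{t:inf:dxdzbnd} together with $|ab|\le\tfrac{1}{2}(a^2+b^2)$ delivers the key estimates $|[\dxk]_i [\dsk]_i|\le\omega^2 n^2\mu_k$ and $|(\dxk)^T \dsk|\le\omega^2 n^2\mu_k$, which are the only quantitative inputs needed in the rest of the argument.

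For~\eqref{l:inf:sym:e1}, I would treat the two inequalities separately. For the lower bound, I use $x_i^k z_i^k \ge \gamma \mu_k$ from $\xysk{k}\in\nN_s(\gamma,\beta)$ and reduce the required inequality to
\[
  (1-\gamma)\sigma_k \mu_k \ge \sszk \left( |[\dxk]_i [\dsk]_i| + \tfrac{\gamma}{n}|(\dxk)^T\dsk| \right),
\]
which, after substituting the $\omega^2 n^2 \mu_k$ bounds, holds whenever $\sszk \le C_1 (1-\gamma)\sigma_{min}/n^2$ for an explicit constant $C_1$. For the upper bound, I use $x_i^k z_i^k \le \mu_k/\gamma$, which yields the symmetric condition
\[
  \tfrac{1-\gamma}{\gamma}\sigma_k \mu_k \ge \sszk \left( |[\dxk]_i [\dsk]_i| + \tfrac{1}{\gamma n}|(\dxk)^T\dsk| \right),
\]
and therefore also an $O(1/n^2)$ bound on $\sszk$. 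Inequality~\eqref{l:inf:sym:e0} is easier: summing the component-wise identity gives $(x^k+\sszk\dxk)^T(z^k+\sszk\dsk) = (1-\sszk(1-\sigma_k)) n\mu_k + \sszk^2 (\dxk)^T\dsk$, and using the bound on $|(\dxk)^T\dsk|$ together with $\sigma_k \le \sigma_{max}<1$ (choosing $\alphadec$ so that $1-\sigma_{max}-\alphadec>0$) yields both inequalities for $\sszk$ up to an $O(1/n)$ value, which is strictly weaker than the $O(1/n^2)$ thresholds above.

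Setting $\hat\alpha$ to the minimum of all the thresholds identified produces $\hat\alpha \ge \bar{\bar\delta}/n^2$ with $\bar{\bar\delta}$ depending only on $\gamma$, $\beta$, $\sigma_{min}$, $\sigma_{max}$ and $\alphadec$, which are all independent of $n$ and $k$. I expect the main technical obstacle to be the new upper bound in~\eqref{l:inf:sym:e1}: the classical proof exploits only $x_i^k z_i^k \ge \gamma\mu_k$, whereas the symmetric neighborhood demands a matching argument with $x_i^k z_i^k \le \mu_k/\gamma$. Some care is also needed to verify that the resulting constant $\bar{\bar\delta}$ is truly uniform in $k$; this follows because the bounds~\eqref{t:inf:xz1bnd}--\eqref{t:inf:dxdzbnd} used in the derivation are themselves uniform consequences of the initialization~\eqref{t:infinit} and the invariant $\xysk{k}\in\nN_s(\gamma,\beta)$.
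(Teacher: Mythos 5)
Your proposal is correct and follows essentially the same route as the paper: the paper simply cites Wright's Lemma~6.7 for \eqref{l:inf:sym:e0} and the left inequality of \eqref{l:inf:sym:e1}, and proves only the new right inequality of \eqref{l:inf:sym:e1} by exactly your computation — expand the products via the component-wise form of \eqref{CProds-kp1}, bound $|[\dxk]_i[\dsk]_i|$ and $|{\dxk}^T\dsk|$ by $\omega^2 n^2\mu_k$ using \eqref{t:inf:dxdzbnd}, and deduce the $O(1/n^2)$ threshold $\sszk\le\sigma_{min}(1-\gamma)/(\omega^2 n^2(1+\gamma))$, taking $\bar{\bar\delta}$ as the minimum of this constant and Wright's $\bar\delta$. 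The only difference is that you re-derive the classical parts instead of invoking the citation, which changes nothing of substance.
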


\begin{proof}
  We only need to address the right inequality
  of~\eqref{l:inf:sym:e1}, since all the other results were detailed
  in~\cite[Lemma~6.7]{Wright1997}. Since $k$ is a Newton iteration,
  by~\eqref{t:inf:dxdzbnd}, we obtain 
  \[
    \lvert \dxk_i \dsk_i \rvert \le \lvert [(D^k)^{-1}]_{ii} \dxk_i
    \rvert \lvert [D^k]_{ii} \dsk_i \rvert \le \| (D^k)^{-1} \dxk \|
    \| D^k \dsk \| \le \omega^2 n^2 \mu_k.
  \]
  As $\dxysk$ solves~\eqref{def:newtonsys} and
  $\xysk{k} \in \nN_s(\gamma, \beta)$, using a component-wise version
  of~\eqref{CProds-kp1} we get
  \begin{equation}
    \label{l:inf:sym:e2}
    \begin{split}
      [x^k + \sszk \dxk]_i [z^k + \sszk \dsk]_i 
      & = (1 - \sszk) x^k_i z^k_i + \sszk \sigma_k \mu_k + \sszk^2 \dxk_i \dsk_i \\
      & \le \frac{1 - \sszk}{\gamma} \mu_k + \sszk \sigma_k \mu_k +
      \sszk^2 \omega^2 n^2 \mu_k.
    \end{split}
  \end{equation}
  Using similar arguments and equation~\eqref{CProds-kp1} again, we also obtain
  \begin{equation}
    \label{l:inf:sym:e3}
    \begin{split}
      \frac{1}{\gamma} \mu(\sszk) 
        =   \frac{1}{\gamma} \frac{(x^k + \sszk \dxk)^T(z^k + \sszk \dsk)}{n} 
      & \ge \frac{1 - \sszk}{\gamma} \mu_k + \frac{\sszk \sigma_k}{\gamma} \mu_k 
            - \frac{\sszk^2 \lvert {\dxk}^T \dsk \rvert}{\gamma n} \\
      & \ge \frac{1 - \sszk}{\gamma} \mu_k + \frac{\sszk \sigma_k}{\gamma} \mu_k 
            - \frac{\sszk^2 \omega^2 n}{\gamma} \mu_k.
    \end{split}
  \end{equation}
  Using~\eqref{l:inf:sym:e2}, \eqref{l:inf:sym:e3} and the fact that
  $n \ge 1$, we obtain
  \begin{equation*}
    \begin{split}
      [x^k + \sszk \dxk]_i [z^k + \sszk \dsk]_i - \frac{1}{\gamma}
      \mu(\sszk) & \le \sszk \sigma_k \left(1 - \frac{1}{\gamma}
      \right) \mu_k + \sszk^2 \omega^2 n^2 \left( 1 + \frac{1}{\gamma}
      \right) \mu_k.
    \end{split}
  \end{equation*}
  The right-hand side of this inequality is non-positive if
  $\sszk \le \dfrac{\sigma_{min}(1 - \gamma)}{\omega^2 n^2 (1 +
    \gamma)}$. By defining $\bar {\bar \delta}$ as the minimum of
  $\dfrac{\sigma_{min}(1 - \gamma)}{\omega^2 (1 + \gamma)}$ 
  and $\bar \delta$ defined in~\cite[Lemma~6.7]{Wright1997}, 
  we obtain the desired result.
\end{proof}

By Lemma~\ref{l:inf:symmetric}, if $k$ is a Newton
iteration of Algorithm~\ref{alg:pdqnipm} and
$\sszk \le \bar {\bar \delta} / n^2$, then point $\xysk{k + 1}$
satisfies~\eqref{t:inf:sufdec}. If only Newton steps are made, 
then $O(n^2)$ worst-case iteration complexity can be proved 
for $\nN_s(\gamma, \beta)$ (see~\cite[Chapter 6]{Wright1997}).

Based on the previous paragraphs, we set some assumptions that will be
used in the remaining of this section. As usual, at iteration $k$ 
the Newton step is taken and at iteration $k + 1$ the quasi-Newton 
step is made.  
We assume that $\xysk{0}$ satisfies~\eqref{t:infinit},
both $\xysk{k}$ and $\sszk$ satisfy~\eqref{t:inf:sufdec}, and
$\xysk{k + 1} \in \nN_s(\gamma, \beta)$. In our analysis, we will use
extensively various well known results for Newton steps, such as
properties~\eqref{t:inf:xz1bnd}--\eqref{l:inf:sym:e1}.
We observe that, while in the standard Newton approach one has 
to compute bounds for ${\dxk}^T \dsk$, in the quasi-Newton approach 
(see for example~\eqref{CP-afterQN}), 
it is necessary to additionally get bounds for $\gamma_1$ and
$(\sszk \dxk + \ssz \dxkm1)^T (\sszk \dsk + \ssz \dskm1)$. 
In the next lemma, we give a bound for $\gamma_1$ without assuming 
feasibility of iterates.



\begin{lemma}
  \label{l:inf:gamma1bnd}
  Let $k + 1$ be a quasi-Newton iteration of
  Algorithm~\ref{alg:pdqnipm}. Suppose that $v$ in
  Lemma~\ref{l:bbroyd} is given by the right-hand side
  of~\eqref{def:newtonsys} at iteration $k + 1$ and
  $\sszk \in (0, 1]$.
  %
  %
  %
  Then, there exists a constant $C_3 \ge 1$, independent
  of $n$ and $k$ such that
  \begin{equation} \label{t:inf:gamma1bnd} %
    | \gamma_1 | \le C_3 \frac{\sqrt{n}}{\sszk}.
  \end{equation}
\end{lemma}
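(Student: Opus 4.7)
The plan is to combine the projection inequality \eqref{projection} with an upper bound on $\|v\|$ and the lower bound on $\|y_k\|$ from Lemma~\ref{l:ykbnd}, just as was done in the feasible cases (Lemmas~\ref{l:n2gbnd} and~\ref{l:nsgbnd}). The only real novelty is that $v$ now has nonzero primal and dual infeasibility components, so these must be controlled through the definition of $\nN_s(\gamma,\beta)$.

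First I would split $v$ according to its three blocks. The top two blocks are $-r_c^{k+1}$ and $-r_b^{k+1}$, whose norm satisfies $\|(r_b^{k+1},r_c^{k+1})\| \le \beta \mu_{k+1} \|(r_b^0,r_c^0)\|/\mu_0$ by the definition of $\nN_s(\gamma,\beta)$ and the hypothesis $\xysk{k+1}\in\nN_s(\gamma,\beta)$. Using the special starting point \eqref{t:infinit} and \eqref{t:inf:mu0}, the ratio $\|(r_b^0,r_c^0)\|/\mu_0$ is a constant $C_1$ independent of $n$ and $k$. For the third block, I would reuse the computation at the beginning of the proof of Lemma~\ref{l:nsgbnd}, where the upper bound $x_i^{k+1} z_i^{k+1} \le \mu_{k+1}/\gamma$ (valid in $\nN_s(\gamma,\beta)$) gives
\[
  \|\sigma_{k+1}\mu_{k+1} e - X^{k+1} Z^{k+1} e\| \le \frac{\sqrt{n}}{\gamma}\mu_{k+1}.
\]
Combining these, $\|v\| \le (\beta C_1 + \sqrt{n}/\gamma)\mu_{k+1} \le C_2 \sqrt{n}\,\mu_{k+1}$ for some $C_2 \ge 1$ independent of $n$ and $k$ (absorbing $\beta C_1$ into the $\sqrt{n}$ term since $n\ge 1$).

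Next I would lower-bound $\|y_k\|$. The crucial observation is that the step-size rule \eqref{t:inf:sufdec} forces $\mu_{k+1} \le (1-\alphadec\sszk)\mu_k$, which is exactly the sufficient decrease hypothesis of Lemma~\ref{l:ykbnd} with $\rho_k = \alphadec \in (0,1)$. Hence $\|y_k\| \ge (\alphadec/2)\sszk \mu_k$. Plugging these bounds into \eqref{projection} and using $\mu_{k+1} \le \mu_k$ yields
\[
  |\gamma_1| \le \frac{\|v\|}{\|y_k\|} \le \frac{C_2 \sqrt{n}\,\mu_{k+1}}{(\alphadec/2)\sszk \mu_k} \le \frac{2 C_2}{\alphadec} \cdot \frac{\sqrt{n}}{\sszk},
\]
so the required constant is $C_3 = \max\{1, 2C_2/\alphadec\}$.

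The only subtlety I foresee is making the constant $C_3$ genuinely independent of $n$ and $k$. This relies on two separate facts: the infeasibility residuals are uniformly bounded in terms of $\mu$ thanks to the $\nN_s(\gamma,\beta)$ neighborhood (not to the individual iteration), and the sufficient decrease in \eqref{t:inf:sufdec} holds with the same $\alphadec$ at every step, so Lemma~\ref{l:ykbnd} gives a denominator that depends only on $\sszk$. No delicate analysis of the composite direction is needed here; all the real work was done in Lemma~\ref{l:ykbnd} and the neighborhood definition.
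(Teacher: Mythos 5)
Your proof is correct and follows essentially the same route as the paper: bound the infeasibility blocks of $v$ via the $\nN_s(\gamma,\beta)$ membership of $\xysk{k+1}$ together with \eqref{t:inf:mu0}, bound the third block by $\gamma^{-1}\sqrt{n}\,\mu_{k+1}$, invoke Lemma~\ref{l:ykbnd} with $\rho_k=\alphadec$ via \eqref{t:inf:sufdec}, and combine through \eqref{projection}. The only cosmetic difference is that the paper assembles $\|v\|$ by summing squared block norms rather than by the triangle inequality, which changes the explicit constant but not the result.
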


\begin{proof}
  Using scalar $\nu_{k + 1}$ defined in the beginning of this section,
  vector $v$ at iteration $k + 1$ is given by
  \begin{equation} \label{l:infvbnd:e1} %
    v =
    \begin{bmatrix}
      0 \\ 0 \\ \sigma_{k + 1} \mu_{k + 1} e
    \end{bmatrix}
    - F(x^{k + 1}, \lambda^{k + 1}, z^{k + 1}) =
    \begin{bmatrix}
      - r_c^{k + 1} \\ 
      - r_b^{k + 1} \\ 
      \sigma_{k + 1} \mu_{k + 1} e - X^{k + 1} Z^{k + 1} e
    \end{bmatrix}
    =
    \begin{bmatrix}
      - \nu_{k + 1} r_c^0 \\ 
      - \nu_{k + 1} r_b^0 \\ 
      \sigma_{k + 1} \mu_{k + 1} e - X^{k + 1} Z^{k + 1} e
    \end{bmatrix} .
  \end{equation}
  We observe that, since
  $e^T (\mu_{k + 1} e - X^{k + 1} Z^{k + 1} e) = 0$, 
  by~\eqref{t:inf:sufdec} we obtain 
  \begin{equation} \label{l:infvbnd:e2} %
    \begin{split}
      \| \sigma_{k + 1} \mu_{k + 1} e - X^{k + 1} Z^{k + 1} e \|^2 
      & = \| (\sigma_{k + 1} - 1) \mu_{k + 1} e + (\mu_{k + 1} e - X^{k + 1} Z^{k + 1} e) \|^2 \\
      & = (\sigma_{k + 1} - 1)^2 \mu_{k + 1}^2 n + \| \mu_{k + 1} e - X^{k + 1} Z^{k + 1} e \|^2 \\
      & = (\sigma_{k + 1} - 1)^2 \mu_{k + 1}^2 n + \| X^{k + 1} Z^{k + 1} e \|^2 - n \mu_{k + 1}^2 \\
      & \le - (2 - \sigma_{k + 1}) \sigma_{k + 1} n \mu_{k + 1}^2 
      + \gamma^{-2} n \mu_{k + 1}^2\ \le\ \gamma^{-2} n \mu_{k + 1}^2 .
      %
      %
      %
      %
    \end{split}
  \end{equation}
  Taking the 2-norm in~\eqref{l:infvbnd:e1} and
  using~\eqref{t:inf:mu0} and~\eqref{l:infvbnd:e2}, we get 
  %
  \begin{equation} \label{l:infvbnd:e3} %
    \begin{split}
      \| v \|^2
      & = \|(r_c^{k + 1}, r_b^{k + 1})\|^2 + \| \sigma_{k +
        1} \mu_{k + 1} e - X^{k + 1} Z^{k + 1} e \|^2 \le \left(
        \frac{\|(r_c^0, r_b^0)\| \beta}{\mu_0} \right)^2 \mu^2_{k + 1}
      + \gamma^{-2} n \mu_{k + 1}^2 \\
      & \le \left[ \left( \frac{\|(r_c^0, r_b^0)\| \beta}{\xi^2}
        \right)^2 + \gamma^{-2} \right] n \mu^2_{k + 1}.
    \end{split}
  \end{equation}
  By defining $\rho_k = \alphadec > 0$, we can see that
  condition~\eqref{t:inf:sufdec} results in the sufficient decrease
  condition of Lemma~\ref{l:ykbnd}. Since $\sszk \in (0, 1]$, this
  ensures that $\| y_k \| > 0$. Using Lemma~\ref{l:ykbnd},
  \eqref{t:inf:sufdec} and~\eqref{l:infvbnd:e3} in
  equation~\eqref{projection}, we conclude that
  \begin{equation*}
    \begin{split}
      | \gamma_1 | \le \frac{\| v \|}{\| y_k \|} 
      & \le \frac{\left[
          \left( \dfrac{\|(r_c^0, r_b^0)\| \beta}{\xi^2} \right)^2 +
          \gamma^{-2} \right]^{1/2} \sqrt{n} \mu_{k + 1}}{\frac{\alphadec \sszk}{2} \mu_k}
      \le C_3 \frac{\sqrt{n}}{\sszk},
    \end{split}
  \end{equation*}
  with
  $C_3 = 2 (\gamma \alphadec)^{-1} 
  \left[ \left( \dfrac{\|(r_c^0, r_b^0)\| \beta \gamma}{\xi^2} \right)^2 
    \! + \! 1 \right]^{1/2}$.
  In the last inequality we used~\eqref{t:inf:sufdec} to yield 
  $\mu_{k + 1} / \mu_k \le 1$.
\end{proof}

Our goal now is to bound the term
$(\sszk \dxk + \ssz \dxkm1)^T (\sszk \dsk + \ssz \dskm1)$
in~\eqref{CP-afterQN}. We follow the same approach as~\cite{Wright1997} 
but compute, instead, bounds of $(D^k)^{-1} (\sszk \dxk + \ssz \dxkm1)$ 
and $D^k (\sszk \dsk + \ssz \dskm1)$, where $D^k = (X^k)^{1/2} (Z^k)^{-1/2}$. 
It is important to note that matrix $D^k$ is related to the Newton 
iteration, hence we can use Lemma~\ref{l:bbroyd} and the enjoyable 
properties of the true Jacobian.

First, we show what happens when matrix $A$ multiplies the combined
direction $\sszk \dxk + \ssz \dxkm1$:
\begin{equation*}
  \begin{split}
    A (\sszk \dxk & + \ssz \dxkm1) = \sszk A \dxk + \ssz A \dxkm1 = -
    \sszk (A x^k - b) - \ssz (A x^{k + 1} - b) \\
    & = - \sszk (A x^k - b) - \ssz (A (x^k + \sszk \dxk) - b)
    \ = \ - \sszk (A x^k - b) - \ssz (A x^k - b) + \sszk \ssz (A x^k - b) \\
    & = (\sszk \ssz - \sszk - \ssz) (A x^k - b)
    \ = \ (\sszk \ssz - \sszk - \ssz) \nu_k (A x^0 - b) \\
    & = A \left[ (\sszk \ssz - \sszk - \ssz) \nu_k (x^0 - x^*) \right],
  \end{split}  
\end{equation*}
where $x^*$ is the primal solution of~\eqref{def:qp} used
to define constant $\xi$ in~\eqref{t:infinit}. By defining
$\hat \nu_k = - (\sszk \ssz - \sszk - \ssz) \nu_k 
            = (1 - (1 - \sszk) (1 - \ssz)) \nu_k$, 
we can see that the point
$\hat x = \sszk \dxk + \ssz \dxkm1 + \hat \nu_k (x^0 - x^*)$ is such
that $A \hat x = 0$. Similar arguments can be used to show that
$\hat z = \sszk \dsk + \ssz \dskm1 + \hat \nu_k (z^0 - z^*)$ and
$\hat \lambda = \sszk \dyk + \ssz \dykm1 + \hat \nu_k (\lambda^0 -
\lambda^*)$ satisfy $A^T \hat \lambda + \hat z = 0$. Therefore, it is
not hard to see that
\begin{equation} \label{t:inf:scalarp} %
  {\hat x}^T \hat z = \left( \sszk \dxk + \ssz \dxkm1 + \hat \nu_k
    (x^0 - x^*) \right)^T \left( \sszk \dsk + \ssz \dskm1 + \hat \nu_k
    (z^0 - z^*) \right) = 0.
\end{equation}

We now multiply the third row of the coefficient matrix
in~\eqref{def:newtonsys} by
$\begin{bmatrix} \hat x & \hat \lambda & \hat z \end{bmatrix}$ to
obtain
\begin{equation*}
  \begin{split}
    Z^k \hat x + X^k \hat z & = Z^k \left( \sszk \dxk + \ssz \dxkm1 +
      \hat \nu_k (x^0 - x^*) \right) + X^k \left( \sszk \dsk + \ssz
      \dskm1 + \hat \nu_k (z^0 - z^*) \right)\\
    & = Z^k ( \sszk \dxk + \ssz \dxkm1) + X^k ( \sszk \dsk + \ssz
    \dskm1 ) + \hat \nu_k Z^k (x^0 - x^*) + \hat \nu_k X^k ( z^0 -
    z^*).
  \end{split}
\end{equation*}
Multiplying this equation on both sides by $(X^k Z^k)^{-1/2}$, we get 
\begin{equation} \label{t:inf:hateq} %
  \begin{split}
    (D^k)^{-1} \hat x + D^k \hat z & = (X^k Z^k)^{-1/2} \left[ Z^k (
      \sszk \dxk + \ssz \dxkm1) + X^k ( \sszk \dsk + \ssz
      \dskm1 ) \right] \\
    & \quad + \hat \nu_k (D^k)^{-1} (x^0 - x^*) + \hat \nu_k {D^k} (
    z^0 - z^*).
  \end{split}
\end{equation}
Using~\eqref{t:inf:scalarp} and~\eqref{t:inf:hateq}, we conclude that
\begin{equation*}
  \begin{split}
    \| (D^k)^{-1} \hat x \|^2 + \| D^k \hat z \|^2 
    & = \| (D^k)^{-1} \hat x + D^k \hat z \|^2 \\
    & = \Big\| (X^k Z^k)^{-1/2} 
           \left[ Z^k ( \sszk \dxk + \ssz \dxkm1) 
                + X^k ( \sszk \dsk + \ssz \dskm1 ) 
           \right] \\
    & \qquad + \hat \nu_k (D^k)^{-1} (x^0 - x^*) 
                    + \hat \nu_k {D^k} ( z^0 - z^*) \Big\|^2 .
  \end{split}
\end{equation*}
Using this relation we get 
\begin{equation} 
\label{t:inf:dxbnd} %
  \begin{split}
    \| (D^k)^{-1} \hat x \| 
    & \le \left\| (X^k Z^k)^{-1/2} 
          \left[ Z^k ( \sszk \dxk + \ssz \dxkm1) 
               + X^k ( \sszk \dsk + \ssz \dskm1 ) 
          \right] \right\| \\
    & \quad + \hat \nu_k \| {D^k}^{-1} (x^0 - x^*) \| 
            + \hat \nu_k \| {D^k} ( z^0 - z^*) \|
  \end{split}
\end{equation}
and observe that the same bound holds for $\| D^k \hat z \|$. 
In the next two lemmas, we compute the bounds for the terms 
in the right-hand side of~\eqref{t:inf:dxbnd}.

\begin{lemma} 
\label{l:inf:rdxbnd} %
  Let $k + 1$ be a quasi-Newton iteration of Algorithm~\ref{alg:pdqnipm}. 
  Let $\omega$ and $C_3$ be the constants (independent of $k$ and $n$) 
  defined in equation~\eqref{t:inf:dxdzbnd} and Lemma~\ref{l:inf:gamma1bnd}, 
  respectively. Then
  \begin{equation*}
    \begin{split}
      \Big\| (X^k Z^k)^{-1/2} &
         \left[ Z^k ( \sszk \dxk + \ssz \dxkm1) 
              + X^k ( \sszk \dsk + \ssz \dskm1 ) \right] 
      \Big\| \le \\
      & \le \gamma^{-1/2} 
        \left[ (\sigma_{max} + \gamma^{-1})(\sszk + \ssz) 
             + (\sszk + C_3) \ssz \sszk \omega^2 \right] n^{5/2} \mu_k^{1/2}.
    \end{split}
  \end{equation*}
\end{lemma}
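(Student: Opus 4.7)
The plan is to combine the Newton step identity $Z^k \dxk + X^k \dsk = \sigma_k \mu_k e - X^k Z^k e$ with the quasi-Newton identity~\eqref{useful-result}, namely $Z^k \dxkm1 + X^k \dskm1 = \sigma_{k+1} \mu_{k+1} e - X^{k+1} Z^{k+1} e - \gamma_1 \sszk^2 \dXk \dSk e$, which holds irrespective of feasibility by Lemma~\ref{l:bbroyd}. Multiplying the first identity by $\sszk$, the second by $\ssz$ and summing yields
\[
  Z^k(\sszk \dxk + \ssz \dxkm1) + X^k(\sszk \dsk + \ssz \dskm1) = \sszk (\sigma_k \mu_k e - X^k Z^k e) + \ssz(\sigma_{k+1} \mu_{k+1} e - X^{k+1} Z^{k+1} e) - \ssz \gamma_1 \sszk^2 \dXk \dSk e.
\]

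After pre-multiplying by $(X^k Z^k)^{-1/2}$ and applying the triangle inequality, the task reduces to bounding three separate terms. For the first I would use $x^k_i z^k_i \in [\gamma \mu_k, \gamma^{-1} \mu_k]$ (from $\xysk{k} \in \nN_s(\gamma, \beta)$) to write $|\sigma_k \mu_k - x^k_i z^k_i| \le (\sigma_{max} + \gamma^{-1}) \mu_k$ and to bound the denominator below by $\gamma \mu_k$; this produces a contribution of order $\sszk (\sigma_{max} + \gamma^{-1}) \gamma^{-1/2} \sqrt{n} \mu_k^{1/2}$. The second term is treated analogously using $\xysk{k+1} \in \nN_s(\gamma, \beta)$ and the bound $\mu_{k+1} \le \mu_k$ which follows from~\eqref{t:inf:sufdec}. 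For the third term the key observation is that $D^k = (X^k)^{1/2}(Z^k)^{-1/2}$ refers to the Newton iterate, so the Newton-direction estimates~\eqref{t:inf:dxdzbnd} can be exploited componentwise:
\[
  \|(X^k Z^k)^{-1/2} \dXk \dSk e\|^2 = \sum_{i=1}^n \frac{\bigl([(D^k)^{-1} \dxk]_i\, [D^k \dsk]_i\bigr)^2}{x^k_i z^k_i} \le \frac{\|(D^k)^{-1} \dxk\|^2 \|D^k \dsk\|^2}{\gamma \mu_k} \le \frac{\omega^4 n^4 \mu_k}{\gamma}.
\]

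Combining this estimate with Lemma~\ref{l:inf:gamma1bnd}, which delivers $|\gamma_1| \le C_3 \sqrt{n}/\sszk$, the third term contributes at most $C_3 \ssz \sszk \omega^2 n^{5/2} \gamma^{-1/2} \mu_k^{1/2}$. Absorbing $\sqrt{n} \le n^{5/2}$ in the first two terms and bounding $C_3 \le \sszk + C_3$ to match the stated form then completes the argument. I expect the main obstacle to be this third term: it requires both the quasi-Newton projection estimate from Lemma~\ref{l:inf:gamma1bnd}, which only gives a bound of order $\sqrt{n}/\sszk$, and the Newton-direction estimate~\eqref{t:inf:dxdzbnd}, which is only available because the scaling $D^k$ and the denominator $x^k_i z^k_i$ are evaluated at the Newton iterate. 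Had we scaled the whole expression by $(X^{k+1} Z^{k+1})^{-1/2}$, the clean properties of the Newton step would no longer be directly accessible and the factor $\omega^2 n^2$ would have to be recovered through a much more involved argument.
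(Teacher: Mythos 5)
Your proof is correct and follows essentially the same route as the paper: both start from the Newton third-row identity together with~\eqref{useful-result}, scale by $(X^k Z^k)^{-1/2}$, bound the denominator below by $\gamma \mu_k$, and control the second-order term via~\eqref{t:inf:dxdzbnd} and Lemma~\ref{l:inf:gamma1bnd}. The only (cosmetic) difference is that the paper further expands $X^{k+1}Z^{k+1}e$ through~\eqref{CProds-kp1}, which produces the factor $(1+\gamma_1)$ and hence the $\sszk$ inside $(\sszk + C_3)$, whereas you bound $X^{k+1}Z^{k+1}e$ directly via the standing assumption $\xysk{k+1} \in \nN_s(\gamma,\beta)$ and absorb $C_3 \le \sszk + C_3$ at the end — both yield the stated estimate.
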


\begin{proof}
  First, we observe that the diagonal matrices $\dXk$ and $\dSk$ were
  computed in the Newton iteration and use~\eqref{t:inf:dxdzbnd} to
  obtain
  \[
    \| \dXk \dSk e \| \le \| (D^k)^{-1} \dXk \| \| D^k \dSk e \| \le
    \| (D^k)^{-1} \dXk \| \omega n \mu_k^{1/2}.
  \]
  Since both $(D^k)^{-1}$ and $\dXk$ are diagonal matrices, using the
  property of the induced 2-norm of matrices, we get 
  \[
    \| (D^k)^{-1} \dXk \| = \underset{i = 1, \dots, n}{\max}
    \dfrac{|[\dxk]_i|}{[D^k]_{ii}} = \| (D^k)^{-1} \dXk e \|_\infty
    \le \| (D^k)^{-1} \dXk e \| \le \omega n \mu_k^{1/2}.
  \]
  Hence,
  \begin{equation} \label{l:inf:rdxbnd:e1} %
    \| \dXk \dSk e \| \le \omega^2 n^2 \mu_k.
  \end{equation}  
  Now, we use equation~\eqref{l:combined:e1} (which does not depend 
  on the feasibility of the iterate or the type of the neighborhood) 
  to expand the desired expression in the statement of this Lemma. 
  Additionally, we use equation~\eqref{t:inf:sufdec} to bound 
  $\mu_{k + 1}$ and $X^k Z^k e$ as well as Lemma~\ref{l:inf:gamma1bnd}
  and equation~\eqref{l:inf:rdxbnd:e1} to derive:
  \begin{equation*}
    \begin{split}
      \left\| (X^k \right. & \left.  Z^k)^{-1/2} 
          \left[ Z^k ( \sszk \dxk + \ssz \dxkm1) 
               + X^k ( \sszk \dsk + \ssz \dskm1 ) \right] \right\| = \\
      & = \left( \sum_{i = 1}^n \dfrac{\left( (1 - \ssz) \sszk \sigma_k \mu_k 
            - (\ssz + \sszk (1 - \ssz)) x_i^k z_i^k + \ssz \sigma_{k + 1} \mu_{k + 1} 
            - (1 +\gamma_1) \ssz \sszk^2 [\dxk]_i [\dsk]_i \right)^2}{x_i^k z_i^k} 
          \right)^{1/2} \\
      & \le (\gamma \mu_k)^{-1/2} \left\| (1 - \ssz) \sszk \sigma_k \mu_k e 
            - (\ssz + \sszk (1 - \ssz)) X^k Z^k e + \ssz \sigma_{k + 1} \mu_{k + 1} e 
            - (1 +\gamma_1) \ssz \sszk^2 \dXk \dSk e \right\| \\
      & \le (\gamma \mu_k)^{-1/2} \left[ (1 - \ssz) \sszk \sigma_k \sqrt{n} \mu_k 
            + (\ssz + \sszk (1 - \ssz)) \gamma^{-1} \sqrt{n} \mu_k 
            + \ssz \sigma_{k + 1} \sqrt{n} \mu_k 
            + |1 +\gamma_1| \ssz \sszk^2 \omega^2 n^2 \mu_k \right] \\
      & \le \gamma^{-1/2} \left[ 
            \left( (\sigma_k + \gamma^{-1})(1 - \ssz) \sszk 
                 + (\sigma_{k + 1} + \gamma^{-1}) \ssz \right) \sqrt{n} 
           + \left(1 + C_3 \frac{\sqrt{n}}{\sszk} \right) 
                 \ssz \sszk^2 \omega^2 n^2 \right] \mu_k^{1/2} \\
      & \le \gamma^{-1/2} \left[ (\sigma_{max} +  \gamma^{-1})(\sszk + \ssz) \sqrt{n} 
           + (\sszk + C_3) \ssz \sszk \omega^2 n^{5/2} \right] \mu_k^{1/2} \\
      & \le \gamma^{-1/2} \left[ (\sigma_{max} + \gamma^{-1})(\sszk + \ssz) 
           + (\sszk + C_3) \ssz \sszk \omega^2 \right] n^{5/2} \mu_k^{1/2}. \\
    \end{split}
  \end{equation*}
\end{proof}

\begin{lemma} \label{l:inf:dxbnd} %
  Let $k + 1$ be a quasi-Newton iteration of
  Algorithm~\ref{alg:pdqnipm}. Let $\omega$ and $C_3$ be the constants
  (independent of $k$ and $n$) defined before. Then,
  \[
    \begin{aligned}
      \| (D^k)^{-1} (\sszk \dxk + \ssz \dxkm1) \| 
        & \le \left[
           (\sigma_{max} + \gamma^{-1} + 8 \beta) (\sszk + \ssz) +
           (\sszk + C_3) \omega^2 \sszk \ssz 
              \right] \gamma^{-1/2} n^{5/2} \mu_k^{1/2} \\
      \| D^k (\sszk \dsk + \ssz \dskm1) \| 
        & \le \left[ (\sigma_{max} + \gamma^{-1} + 8 \beta) (\sszk + \ssz) 
                   + (\sszk + C_3) \omega^2 \sszk \ssz 
              \right] \gamma^{-1/2} n^{5/2} \mu_k^{1/2}.
    \end{aligned}
  \]
\end{lemma}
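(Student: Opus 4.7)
The plan is to combine inequality~\eqref{t:inf:dxbnd} with a triangle inequality stemming from the definition of $\hat x$, namely
\[
  \|(D^k)^{-1}(\sszk \dxk + \ssz \dxkm1)\|
    \le \|(D^k)^{-1}\hat x\| + \hat \nu_k \|(D^k)^{-1}(x^0 - x^*)\|,
\]
and analogously for the dual quantity using the companion bound on $\|D^k \hat z\|$ already observed right after \eqref{t:inf:dxbnd}. The first summand on the right-hand side of \eqref{t:inf:dxbnd} is controlled by Lemma~\ref{l:inf:rdxbnd}, so what remains is to bound the two cross-terms $\hat \nu_k \|(D^k)^{-1}(x^0 - x^*)\|$ and $\hat \nu_k \|D^k(z^0 - z^*)\|$ in terms of $n\mu_k^{1/2}$, with an explicit $(\sszk + \ssz)$ factor that will merge into the $(\sszk + \ssz)$ coefficient of the claimed inequality.

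For these cross-terms I would first observe, from the starting-point definition~\eqref{t:infinit} and the fact that $\|x^*\|_\infty, \|z^*\|_\infty \le \xi$, that $\|x^0 - x^*\|_\infty \le 2\xi$ and $\|z^0 - z^*\|_\infty \le 2\xi$. Using the symmetric neighborhood bound $x^k_i z^k_i \ge \gamma \mu_k$ componentwise to get $z^k_i / x^k_i \le (z^k_i)^2/(\gamma \mu_k)$, one finds
\[
  \|(D^k)^{-1}(x^0 - x^*)\|^2
    \le 4\xi^2 \sum_{i=1}^n \frac{z^k_i}{x^k_i}
    \le \frac{4\xi^2 \|z^k\|_1^2}{\gamma \mu_k},
\]
and symmetrically $\|D^k(z^0 - z^*)\|^2 \le 4\xi^2 \|x^k\|_1^2/(\gamma \mu_k)$. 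Multiplying by $\nu_k^2$ and applying~\eqref{t:inf:xz1bnd} yields both $\nu_k \|(D^k)^{-1}(x^0 - x^*)\|$ and $\nu_k \|D^k(z^0 - z^*)\|$ bounded by a constant multiple of $\beta n\mu_k^{1/2}/\gamma^{1/2}$. Combined with the elementary identity $\hat \nu_k = (1-(1-\sszk)(1-\ssz))\nu_k = (\sszk + \ssz - \sszk \ssz)\nu_k \le (\sszk + \ssz)\nu_k$, each cross-term is therefore of order $\beta (\sszk + \ssz)\, n\mu_k^{1/2}/\gamma^{1/2}$.

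Finally I would assemble the three contributions: the $(\sigma_{max} + \gamma^{-1})(\sszk + \ssz)$ and $(\sszk + C_3)\omega^2 \sszk \ssz$ pieces from Lemma~\ref{l:inf:rdxbnd}, together with the $O(\beta)(\sszk + \ssz)$ piece from the cross-terms. Since all three pieces share the common prefactor $\gamma^{-1/2}\mu_k^{1/2}$ and each is at most $O(n^{5/2})$ in $n$ (the cross-term growth is only $n$, which is absorbed into $n^{5/2}$ using $n \ge 1$), the numerical constants from the cross-term analysis can be collected into the displayed coefficient $8\beta$ added to $\sigma_{max} + \gamma^{-1}$. The dual inequality on $\|D^k(\sszk \dsk + \ssz \dskm1)\|$ follows by the same argument applied to $\hat z$ instead of $\hat x$. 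The main obstacle is bookkeeping: verifying that all the contributions fit under a single $\gamma^{-1/2} n^{5/2}\mu_k^{1/2}$ prefactor and that the three numerical constants coming from the two cross-term bounds (one from the triangle inequality, and the two from \eqref{t:inf:dxbnd}) consolidate cleanly into the quoted $+\,8\beta$ summand, rather than a larger multiple.
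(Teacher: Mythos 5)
Your decomposition is exactly the paper's: write $\sszk\dxk+\ssz\dxkm1 = \hat x - \hat\nu_k(x^0-x^*)$, apply the triangle inequality together with \eqref{t:inf:dxbnd}, control the leading term by Lemma~\ref{l:inf:rdxbnd}, bound the cross-terms via \eqref{t:inf:xz1bnd}, use $\hat\nu_k=(\sszk+\ssz-\sszk\ssz)\nu_k\le(\sszk+\ssz)\nu_k$, and absorb the resulting $O(n)$ growth into $n^{5/2}$. Structurally this is correct and the $n$-dependence works out.

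The one place where your bookkeeping does not close is the constant, and it is precisely the point you flag as a worry. You bound $\|x^0-x^*\|_\infty\le 2\xi$ by the triangle inequality, but the starting point \eqref{t:infinit} gives $x^0_i=\xi$ while $0\le x^*_i\le\xi$ (nonnegativity of the solution plus $\|x^*\|_\infty\le\xi$), so in fact $|x^0_i-x^*_i|=\xi-x^*_i\le\xi$, and likewise for $z$. With the sharper bound $\xi$, and after symmetrizing the decomposition so that \emph{both} cross-terms $\hat\nu_k\|(D^k)^{-1}(x^0-x^*)\|$ and $\hat\nu_k\|D^k(z^0-z^*)\|$ carry the same coefficient $2$ (the paper simply adds the missing $\hat\nu_k\|D^k(z^0-z^*)\|$ to the right-hand side so that $\|x^k\|_1+\|z^k\|_1=\|(x^k,z^k)\|_1$ can be fed into \eqref{t:inf:xz1bnd} in one step), the cross-term total is $2\xi(\gamma\mu_k)^{-1/2}\,\hat\nu_k\,\|(x^k,z^k)\|_1\le 8\beta(\sszk+\ssz)\gamma^{-1/2}n\mu_k^{1/2}$, which is exactly the $+\,8\beta$ in the statement. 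With your $2\xi$ you land on $16\beta$ (or worse if the three cross-terms are bounded separately), so the lemma as stated would not follow. The fix is one line; everything else in your argument, including the componentwise estimate $[(D^k)^{-1}]_{ii}^2\le(z^k_i)^2/(\gamma\mu_k)$ and $\|z^k\|\le\|z^k\|_1$, is a valid (and essentially equivalent) route to the paper's $\xi\|(X^kZ^k)^{-1/2}\|\,\|z^k\|_1$ bound.
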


\begin{proof}
  We will only consider $\| (D^k)^{-1} (\sszk \dxk + \ssz \dxkm1) \|$,
  since getting a bound for $\| D^k (\sszk \dsk + \ssz \dskm1) \|$ 
  follows the same arguments. We use the definition of $\hat x$ (see \eqref{t:inf:scalarp}), 
  add and subtract $\hat \nu_k (D^k)^{-1} (x^0 - x^*)$ inside the norm, 
  and then use the triangle inequality and~\eqref{t:inf:dxbnd} to obtain
  \begin{equation} \label{l:inf:dxbnd:e1} %
    \begin{split}
      \| (D^k)^{-1} (\sszk \dxk + \ssz \dxkm1) \| & = \| (D^k)^{-1}
      \hat x - \hat \nu_k (D^k)^{-1} (x^0 - x^*) \| \le \| (D^k)^{-1}
      \hat x \| + \hat \nu_k \| (D^k)^{-1} (x^0 - x^*) \| \\
      & \le \left\| (X^k Z^k)^{-1/2} \left[ Z^k ( \sszk \dxk + \ssz
          \dxkm1) + X^k ( \sszk \dsk + \ssz \dskm1 ) \right] \right\| \\
      & \quad + 2 \hat \nu_k \| (D^k)^{-1} (x^0 - x^*) \| + 2 \hat
      \nu_k \| {D^k} ( z^0 - z^*) \|,
    \end{split}
  \end{equation}
  where another term $\hat \nu_k \| {D^k} ( z^0 - z^*) \|$ was added
  in the last inequality. We already have bounds for the first term
  in the right-hand side of~\eqref{l:inf:dxbnd:e1}, by
  Lemma~\ref{l:inf:rdxbnd}. From~\cite[Lemma~6.6]{Wright1997} we know
  that
  \begin{equation*}
    \begin{split}
      \| (D^k)^{-1} (x^0 - x^*) \| & \le \| (x^0 - x^*) \| \|
      (D^k)^{-1} \| \le \xi \| (D^k)^{-1} \| = \xi\ \underset{i = 1,
        \dots, n}{\max}\ \dfrac{1}{[D^k]_{ii}} = \xi \| (D^k)^{-1} e
      \|_\infty \\
      & \le \xi \| (D^k)^{-1} e \| = \xi \| (X^k Z^k)^{-1/2} Z^k e \|
      \le \xi \| (X^k Z^k)^{-1/2} \| \| z^k \|_1
    \end{split}
  \end{equation*}
  and, similarly,
  $\| {D^k} ( z^0 - z^*) \| \le \xi \| (X^k Z^k)^{-1/2} \| \| x^k
  \|_1$. We recall that
  $\hat \nu_k = (1 - (1 - \sszk)(1 - \ssz)) \nu_k$ and apply all 
  these inequalities together with~\eqref{t:inf:xz1bnd} to obtain
  \begin{equation*}
    \begin{split}
      2 \hat \nu_k \| (D^k)^{-1} (x^0 - x^*) \| + 2 \hat \nu_k \|
      {D^k} ( z^0 - z^*) \| & \le 2 \xi (1 - (1 - \sszk)(1 - \ssz))
      \|(X^k Z^k)^{-1/2} \| \nu_k \| (x^k, z^k) \|_1 \\
      & \le 2 \xi (1 - (1 - \sszk)(1 - \ssz)) (\gamma \mu_k)^{-1/2}
      \frac{4 \beta}{\xi} n \mu_k \\
      & = (1 - (1 - \sszk)(1 - \ssz)) \frac{8 \beta}{\gamma^{1/2}} n
      \mu_k^{1/2},
    \end{split}
  \end{equation*}
  which provides bounds for the last two terms in~\eqref{l:inf:dxbnd:e1}. 
  Therefore, using Lemma~\ref{l:inf:rdxbnd} and the above inequality we write 
  \begin{equation*}
    \begin{split}
      \| {D^k}^{-1} (\sszk \dxk + \ssz \dxkm1) \| 
      & \le \left[ \frac{(\sigma_{max} +\gamma^{-1}) (\sszk + \ssz) 
             + (\sszk + C_3) \ssz \sszk \omega^2 }{\gamma^{1/2}} n^{5/2} 
             + \frac{8 \beta (1 - (1 - \sszk)(1 - \ssz)) }{\gamma^{1/2}} n 
            \right] \mu_k^{1/2} \\
      & \le \frac{ (\sigma_{max} + \gamma^{-1}) 
             (\sszk + \ssz) + (\sszk + C_3) \ssz \sszk \omega^2 
             + 8 \beta (1 - (1 - \sszk)(1 - \ssz))}{\gamma^{1/2}} n^{5/2} \mu_k^{1/2} \\
      & \le \left[ (\sigma_{max} + \gamma^{-1} 
             + 8 \beta) (\sszk + \ssz) + (\sszk + C_3) \omega^2 \sszk \ssz 
            \right] \gamma^{-1/2} n^{5/2} \mu_k^{1/2}
    \end{split}
  \end{equation*}
  and the lemma is proved.
\end{proof}

If we restrict the choices of $\sszk$ and $\ssz$, then the bounds 
obtained in Lemma~\ref{l:inf:dxbnd} can be significantly simplified
as shown in the corollary below.

\begin{corollary} \label{c:inf:dxbnd} %
  If
  $\sszk \le (\sigma_{max} + \gamma^{-1} + 8 \beta) 
   ((1 + C_3) \omega^2)^{-1}$ 
  and $\ssz \le C_3^{-1} \sszk$, then there exists a constant $C_6$, 
  independent of $n$, such that
  \[
    \| (D^k)^{-1} (\sszk \dxk + \ssz \dxkm1) \| 
       \le C_6 \sszk n^{5/2} \mu_k^{1/2} 
       \quad \text{and} \quad 
    \| D^k (\sszk \dsk + \ssz \dskm1) \| 
       \le C_6 \sszk n^{5/2} \mu_k^{1/2}.
  \]
\end{corollary}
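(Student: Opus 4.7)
The plan is to specialize the bound from Lemma~\ref{l:inf:dxbnd} under the two hypotheses of the corollary so that both summands inside the square brackets can be written as a constant times $\sszk$. I will only work out $\|(D^k)^{-1}(\sszk\dxk + \ssz\dxkm1)\|$, since the bound on $\|D^k(\sszk\dsk + \ssz\dskm1)\|$ follows from the companion inequality in the same lemma by identical reasoning, yielding the same constant.

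For the first summand, $(\sigma_{max} + \gamma^{-1} + 8\beta)(\sszk + \ssz)$, the hypothesis $\ssz \le C_3^{-1}\sszk \le \sszk$ (valid because $C_3 \ge 1$ by Lemma~\ref{l:inf:gamma1bnd}) immediately gives $\sszk + \ssz \le 2\sszk$, so this term is at most $2(\sigma_{max} + \gamma^{-1} + 8\beta)\sszk$. For the second summand, $(\sszk + C_3)\omega^2\sszk\ssz$, I would use $\ssz \le C_3^{-1}\sszk$ once more to obtain $\sszk\ssz \le C_3^{-1}\sszk^2$, and then combine $\sszk \le 1$ (so $\sszk + C_3 \le 1 + C_3$) with the explicit bound $(1 + C_3)\omega^2\sszk \le \sigma_{max} + \gamma^{-1} + 8\beta$, which is just the first assumption of the corollary rearranged. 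Together these yield
\[
(\sszk + C_3)\omega^2\sszk\ssz \;\le\; C_3^{-1}(1 + C_3)\omega^2\sszk \cdot \sszk \;\le\; C_3^{-1}(\sigma_{max} + \gamma^{-1} + 8\beta)\sszk.
\]

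Substituting both bounds into Lemma~\ref{l:inf:dxbnd} and pulling out the common factor $\gamma^{-1/2} n^{5/2}\mu_k^{1/2}$ gives the claim with the explicit constant
\[
C_6 \;=\; (\sigma_{max} + \gamma^{-1} + 8\beta)\bigl(2 + C_3^{-1}\bigr)\gamma^{-1/2},
\]
which is plainly independent of $n$ and $k$. I do not anticipate any real obstacle: the corollary is essentially a bookkeeping specialization of Lemma~\ref{l:inf:dxbnd}, and the only care required is ensuring that the cross-term $\sszk\ssz$ and the quadratic $\sszk^2$ can be absorbed into a multiple of $\sszk$ by using the two assumed inequalities exactly once each. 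The one silent hypothesis is $\sszk \le 1$, which is guaranteed by Algorithm~\ref{alg:pdqnipm} since the step-sizes are drawn from $[0,1]$.
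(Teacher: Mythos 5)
Your proof is correct and follows essentially the same route as the paper: both specialize Lemma~\ref{l:inf:dxbnd} by using the two hypotheses (plus the implicit $\sszk\le 1$ from the algorithm, which you rightly flag) to absorb every summand into a multiple of $\sszk$. The only difference is bookkeeping --- the paper bounds the second summand by $(\sigma_{max}+\gamma^{-1}+8\beta)\ssz$ and obtains $C_6=(\sigma_{max}+\gamma^{-1}+8\beta)(1+2C_3^{-1})\gamma^{-1/2}$, while your grouping gives $(2+C_3^{-1})$ in place of $(1+2C_3^{-1})$; both constants are valid and independent of $n$.
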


\begin{proof}
  Using the bounds on $\ssz$ and $\sszk$ assumed in the lemma we obtain 
  $(\sszk + C_3) \omega^2 \sszk \le (\sigma_{max} + \gamma^{-1} + 8 \beta)$, 
  hence, by $\ssz \le C_3^{-1} \sszk$,
  \begin{equation*}
    \begin{split}
      (\sigma_{max} + \gamma^{-1} + 8 \beta) (\sszk + \ssz) 
        + (\sszk + C_3) \omega^2 \sszk \ssz 
      & \le (\sigma_{max} + \gamma^{-1} + 8 \beta) (\sszk + 2 \ssz) \\
      & \le (\sigma_{max} + \gamma^{-1} + 8 \beta) (1 + 2 C_3^{-1}) \sszk,
    \end{split}
  \end{equation*}
  and the conclusion follows by defining
  $C_6 = (\sigma_{max} + \gamma^{-1} + 8 \beta) 
                          (1 + 2 C_3^{-1}) \gamma^{-1/2}$.
\end{proof}

We are now ready to prove the polynomial worst-case iteration
complexity of Algorithm~\ref{alg:pdqnipm} in the infeasible case.
Lemma~\ref{l:inf:symmetric} dealt with the Newton step of the method.
Theorems~\ref{t:inf:complex} and~\ref{t:inf:complex2} provide the
results for the quasi-Newton step.

\begin{theorem} \label{t:inf:complex} %
  Suppose that $k + 1$ is a quasi-Newton step of
  Algorithm~\ref{alg:pdqnipm} and all the hypotheses of this section
  hold. If the following condition holds 
  \begin{equation} \label{t:inf:complex:p} %
    \begin{aligned}
      \alphadec + \sigma_{max} & \le 1 - \sigma_{min}
    \end{aligned}
  \end{equation}
  then, there exists a constant $C_5$, independent of $n$, such that, if 
  \begin{equation}
    \label{t:inf:complex:lims}
    \sszk \in \left(0, \min \left\{ 1, \left((1 + C_3) \omega^2 \right)^{-1}, 
                                       \frac{C_5}{n^5} \right\} \right] 
       \quad \text{and} \quad
    \ssz \in \left[(C_3 C_5)^{-1} n^5 \sszk^2, \ C_3^{-1} \sszk \right],
  \end{equation}
  then the iterate after the quasi-Newton step satisfies 
  \begin{align}
    (x^{k + 1} + \ssz \dxkm1)^T(z^{k + 1} + \ssz \dskm1) 
       & \ge (1 - \ssz) {x^{k + 1}}^T z^{k + 1} \label{t:inf:complex:e1} \\
    (x^{k + 1} + \ssz \dxkm1)^T(z^{k + 1} + \ssz \dskm1) 
       & \le (1 - \alphadec \ssz) {x^{k + 1}}^T z^{k + 1} \label{t:inf:complex:e2} \\
    [x^k + \sszk \dxk]_i [z^k + \sszk \dsk]_i 
       & \ge \gamma \mu(\ssz) \label{t:inf:complex:e3} \\
    [x^k + \sszk \dxk]_i [z^k + \sszk \dsk]_i 
       & \le \frac{1}{\gamma} \mu(\ssz) \label{t:inf:complex:e4}.
  \end{align}
\end{theorem}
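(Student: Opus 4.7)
The plan is to split the proof into two parallel analyses: the aggregate complementarity bounds~\eqref{t:inf:complex:e1}--\eqref{t:inf:complex:e2} start from equation~\eqref{CP-afterQN}, while the component-wise bounds~\eqref{t:inf:complex:e3}--\eqref{t:inf:complex:e4} start from the scalar identity~\eqref{singleProd}. In both cases, the task reduces to bounding two perturbations, namely the cross-product $(\sszk \dxk + \ssz \dxkm1)^T(\sszk \dsk + \ssz \dskm1)$ and the quasi-Newton correction $(1 + \gamma_1 \ssz) \sszk^2 {\dxk}^T \dsk$, and then showing that, under~\eqref{t:inf:complex:lims}, these are dominated by a linear-in-$\ssz$ quantity of the correct sign.

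I would first assemble the necessary estimates. By Cauchy--Schwarz and Corollary~\ref{c:inf:dxbnd},
\[
  |(\sszk \dxk + \ssz \dxkm1)^T(\sszk \dsk + \ssz \dskm1)|
  \le \| (D^k)^{-1}(\sszk \dxk + \ssz \dxkm1) \| \,
      \| D^k (\sszk \dsk + \ssz \dskm1) \|
  \le C_6^2 \sszk^2 n^5 \mu_k,
\]
and since $|[u]_i [v]_i| \le \|u\| \|v\|$, the same bound also controls the individual products $[\sszk \dxk + \ssz \dxkm1]_i [\sszk \dsk + \ssz \dskm1]_i$. For the quasi-Newton correction, Lemma~\ref{l:inf:gamma1bnd} together with the upper bound $\ssz \le C_3^{-1} \sszk$ from~\eqref{t:inf:complex:lims} yields $|\gamma_1| \ssz \le \sqrt{n}$, hence $|1 + \gamma_1 \ssz| \le 2\sqrt{n}$; combined with the Newton-step bound $|[\dxk]_i [\dsk]_i| \le \omega^2 n^2 \mu_k$ from~\eqref{l:inf:rdxbnd:e1}, this gives $|1 + \gamma_1 \ssz| \sszk^2 |{\dxk}^T \dsk| \le 2\omega^2 \sszk^2 n^{5/2} \mu_k$. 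Thus every error term is of order $\sszk^2 n^5 \mu_k$.

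With these estimates in hand, \eqref{t:inf:complex:e1}--\eqref{t:inf:complex:e2} reduce, after rewriting~\eqref{CP-afterQN} as $(1-\ssz) n \mu_{k+1} + \ssz \sigma_{k+1} n \mu_{k+1} + E$, to verifying $|E| \le \ssz \sigma_{min} n \mu_{k+1}$, where hypothesis~\eqref{t:inf:complex:p} supplies $1 - \alphadec - \sigma_{k+1} \ge \sigma_{min}$ for the upper inequality. Since $\mu_{k+1} \ge (1-\sszk) \mu_k$ by~\eqref{l:inf:sym:e0} and $\sszk$ is bounded away from $1$ via~\eqref{t:inf:complex:lims}, one has $\mu_k \le 2 \mu_{k+1}$, and then the lower bound $\ssz \ge (C_3 C_5)^{-1} n^5 \sszk^2$ forces the desired inequality for $C_5$ suitably small. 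The component-wise statements~\eqref{t:inf:complex:e3}--\eqref{t:inf:complex:e4} are handled in the same spirit: expand via~\eqref{singleProd}, exploit $\gamma \mu_{k+1} \le x_i^{k+1} z_i^{k+1} \le \gamma^{-1} \mu_{k+1}$ from Assumption~\ref{a:ns}, and use $\mu(\ssz) = (1 - \ssz(1 - \sigma_{k+1})) \mu_{k+1} + E/n$ (which follows directly from summing~\eqref{CP-afterQN}) to relate $\mu(\ssz)$ and $\mu_{k+1}$; this produces a slack of order $\ssz \sigma_{min}(1 - \gamma)\mu_{k+1}$ that once again dominates the error.

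The main obstacle is bookkeeping rather than any single difficult estimate: the constant $C_5$ must be chosen simultaneously small enough that (i) the interval $[(C_3 C_5)^{-1} n^5 \sszk^2, C_3^{-1} \sszk]$ in~\eqref{t:inf:complex:lims} is non-empty, which forces $\sszk \le C_5/n^5$; (ii) the compound error $|E_i| + \gamma |E|/n$ arising in the component-wise argument is dominated by the slack $\ssz \sigma_{min}(1 - \gamma)\mu_{k+1}$; and (iii) the auxiliary restrictions $\sszk \le 1$ and $\sszk \le ((1 + C_3)\omega^2)^{-1}$ needed to invoke Corollary~\ref{c:inf:dxbnd} remain compatible with $\sszk \le C_5/n^5$. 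Once these requirements are consolidated into a single threshold, a uniform $C_5$ exists and all four inequalities follow.
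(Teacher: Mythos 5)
Your proposal follows essentially the same route as the paper's proof: equation~\eqref{CP-afterQN} for the aggregate bounds, \eqref{singleProd} for the component-wise ones, Corollary~\ref{c:inf:dxbnd} and Lemma~\ref{l:inf:gamma1bnd} with $\ssz \le C_3^{-1}\sszk$ to reduce every perturbation to $O(\sszk^2 n^5 \mu_k)$, and the same consolidation of thresholds into a single $C_5$ making the step-size interval non-empty. The only nit is that the bound $\gamma \mu_{k+1} \le x_i^{k+1} z_i^{k+1} \le \gamma^{-1}\mu_{k+1}$ should be cited from the infeasible-neighborhood hypothesis $\xysk{k+1} \in \nN_s(\gamma,\beta)$ (via Lemma~\ref{l:inf:symmetric}) rather than from Assumption~\ref{a:ns}, which belongs to the feasible case.
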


\begin{proof}
  By Lemma~\ref{l:inf:gamma1bnd} we know that $C_3 \ge 1$. 
  Hence, the given intervals for $\ssz$
  and $\sszk$ are well defined. We begin the proof with
  inequality~\eqref{t:inf:complex:e1}.  Using~\eqref{t:inf:dxdzbnd},
  we get
  \begin{equation}
    \label{t:inf:c:e3}
    \sszk^2 {\dxk}^T \dsk 
       \le \sszk^2 \| {D^k}^{-1} \dxk \| \| D^k \dsk \| 
       \le \sszk^2 \omega^2 n^2 \mu_k.
  \end{equation}
  By Corollary~\ref{c:inf:dxbnd}, we obtain 
  \begin{equation}
    \label{t:inf:c:e4}
    \begin{split}
      (\sszk \dxk + \ssz \dxkm1)^T (\sszk \dsk + \ssz \dskm1) 
      & \le \| (D^k)^{-1} (\sszk \dxk + \ssz \dxkm1) \| \| D^k (\sszk \dsk + \ssz \dskm1) \| \\
      & \le C_6^2 n^5 \mu_k \sszk^2.
    \end{split}
  \end{equation}
  Finally, by using inequality~\eqref{l:inf:sym:e0} from
  Lemma~\ref{l:inf:symmetric}, we get 
  \begin{equation}
    \label{t:inf:c:e5}
    \ssz \sigma_{k + 1} {x^{k + 1}}^T z^{k + 1} \ge \ssz (1 - \sszk)
    \sigma_{k + 1} {x^k}^T z^k \ge \sigma_{min} \ssz
    (1 - \sszk) n \mu_k.
  \end{equation}
  Starting from equation~\eqref{CP-afterQN}, using~\eqref{t:inf:c:e3}--\eqref{t:inf:c:e5},  
  and then applying the bounds for $|\gamma_1|$ from Lemma~\ref{l:inf:gamma1bnd} 
  together with the bound $\ssz \le C_3^{-1} \sszk$ (assumed in~\eqref{t:inf:complex:lims})
  to deliver $1 + \ssz |\gamma_1| \leq 1 + C_3^{-1} \sszk C_3 \frac{\sqrt{n}}{\sszk} = 1 + \sqrt{n}$, 
  we get 
  \begin{equation*}
    \begin{split}
      (x^{k + 1} & + \ssz \dxkm1)^T (z^{k + 1} + \ssz \dskm1) - (1 -
      \ssz) {x^{k + 1}}^T z^{k + 1} \ge \\
      & \ge \sigma_{min} \ssz (1 - \sszk) n \mu_k - C_6^2 n^5 \mu_k
      \sszk^2 - (1 + \gamma_1 \ssz) \sszk^2 {\dxk}^T \dsk \\
      & \ge \sigma_{min} \ssz (1 - \sszk) n \mu_k - (1 + \ssz
      |\gamma_1|) \sszk^2 \lvert {\dxk}^T \dsk \rvert - C_6^2 n^5 \mu_k \sszk^2 \\
      & \ge \left( \sigma_{min} \ssz - \sigma_{min} \ssz \sszk -
        (1 + n^{1/2}) \sszk^2 \omega^2 n - C_6^2 n^4 \sszk^2 \right) n \mu_k \\
      & \ge \left[ \sigma_{min} \ssz - \left( C_3^{-1} \sigma_{min} +
          2 \omega^2 + C_6^2 \right) \sszk^2 n^4 \right] n \mu_k.
    \end{split}
  \end{equation*}
  By defining
  \[
    \kappa = 2 \omega^2 + C_6^2 
       \quad \text{and} \quad 
    C_4 = C_3^{-1} \sigma_{min} \left( C_3^{-1} \sigma_{min} + \kappa \right)^{-1},
  \]
  the right-hand side of the previous inequality is non-negative 
  if $\ssz \ge (C_4 C_3)^{-1} n^4 \sszk^2$ and $\sszk \le C_4 / n^4$.
  The lower bound for $\ssz$ is necessary to avoid too small step 
  in the quasi-Newton direction. It is obtained by writing $\ssz$ 
  as a function of $\sszk$, in order to guarantee its non-negativity. 
  The upper bound for $\sszk$ is obtained by requesting that $\sszk$ 
  has to be chosen such that $(C_4 C_3)^{-1} n^4 \sszk^2 \le C_3^{-1} \sszk$
  holds.
  %
  %

  For inequality~\eqref{t:inf:complex:e2}, we start by subtracting
  $(1 - \alphadec \ssz) {x^{k + 1}}^T z^{k + 1}$ from both sides
  of equation~\eqref{CP-afterQN}
  \begin{equation*}
    \begin{split}
      (x^{k + 1} & + \ssz \dxkm1)^T (z^{k + 1} + \ssz \dskm1) 
         - (1 - \alphadec \ssz) {x^{k + 1}}^T z^{k + 1} = \\
      & = - (1 - \alphadec - \sigma_{k + 1}) \ssz {x^{k + 1}}^T z^{k + 1} 
          + (\sszk \dxk + \ssz \dxkm1)^T (\sszk \dsk + \ssz \dskm1) 
          - (1 + \ssz \gamma_1) \sszk^2 {\dxk}^T \dsk.
    \end{split}
  \end{equation*}
  We observe that the first term is negative by the assumption 
  \eqref{t:inf:complex:p} of the Theorem, since $\sigma_{k + 1} \le \sigma_{max}$. 
  Next, from the left inequality in~\eqref{l:inf:sym:e0}, we have 
  $(1 - \bar \alpha_k ) n \mu_k \leq n \mu_{k+1}$ and then by using 
  the condition $\alphadec + \sigma_{max} \le 1 - \sigma_{min}$, 
  \eqref{t:inf:c:e3} and~\eqref{t:inf:c:e4}, we conclude that
  \begin{equation*}
    \begin{split}
      (x^{k + 1} 
      & + \ssz \dxkm1)^T (z^{k + 1} + \ssz \dskm1) 
        - (1 - \alphadec \ssz) {x^{k + 1}}^T z^{k + 1} \le \\
      & \le - \sigma_{min} \ssz (1 - \sszk) n \mu_k 
        + (1 + \ssz |\gamma_1|) \sszk^2 |{\dxk}^T \dsk| 
        + |(\sszk \dxk + \ssz \dxkm1)^T (\sszk \dsk + \ssz \dskm1)| \\
      & \le - \sigma_{min} \ssz (1 - \sszk) n \mu_k 
        + (1 + n^{1/2}) \sszk^2 \omega^2 n^2 \mu_k + C_6^2 \sszk^2 n^5 \mu_k \\
      & \le \left[ - \sigma_{min} \ssz 
        + \left(\sigma_{min} C_3^{-1} + \kappa \right) \sszk^2 n^4 
            \right] n \mu_k, 
    \end{split}
  \end{equation*}
  where in the last inequality we used the bound 
  $\ssz \leq C_3^{-1} \sszk$ to deliver 
  $\sigma_{min} \ssz \sszk n \mu_k \leq \sigma_{min} C_3^{-1} \sszk^2 n \mu_k$. 
  The right-hand side of the previous inequality will be non-positive if, 
  as before, $\ssz \ge (C_4 C_3)^{-1} n^4 \sszk^2$ and $\sszk \le C_4 / n^4$. 

  To prove inequalities~\eqref{t:inf:complex:e3} and~\eqref{t:inf:complex:e4}, 
  we first look at~\eqref{singleProd} which is a component-wise version 
  of~\eqref{NewComplProds}. We also need to derive component-wise versions 
  of~\eqref{t:inf:c:e4} and~\eqref{t:inf:c:e3}. 
  For~\eqref{t:inf:c:e4} we have 
  \begin{equation} \label{t:inf:c:e6} %
    \begin{split}
      [\sszk \dxk + \ssz \dxkm1]_i [\sszk \dsk + \ssz \dskm1]_i 
      & \le \lvert [(D^k)^{-1}]_{ii} [\sszk \dxk + \ssz \dxkm1]_i \rvert \
            \lvert [D^k]_{ii} [\sszk \dsk + \ssz \dskm1]_i \rvert \\
      & \le \| (D^k)^{-1} (\sszk \dxk + \ssz \dxkm1) \| \| D^k (\sszk \dsk + \ssz \dskm1) \| \\
      & \le C_6^2 n^5 \mu_k \sszk^2,
    \end{split}
  \end{equation}
  and, by a similar approach, equation~\eqref{t:inf:c:e3},
  Lemma~\ref{l:inf:gamma1bnd} and the assumption of the theorem 
  $\ssz \le C_3^{-1} \sszk$ we get 
  \begin{equation} \label{t:inf:c:e7} %
    \begin{split}
      (1 + \ssz \gamma_1) \sszk^2 [\dxk]_i [\dsk]_i & \le (1 + \ssz
      \lvert \gamma_1 \rvert ) \sszk^2 \lvert [\dxk]_i [\dsk]_i \rvert \\
      & \le (1 + n^{1/2}) \sszk^2 \| (D^k)^{-1} \dXk \| \| D^k \dSk
      \| \\
      & \le 2 \sszk^2 \omega^2 n^{5/2} \mu_k.
    \end{split}
  \end{equation}
  Now, taking~\eqref{singleProd} and applying~\eqref{l:inf:sym:e1}
  and~\eqref{t:inf:c:e6}--\eqref{t:inf:c:e7} we get 
  \begin{equation*}
    \begin{split}
      [x^{k + 1} + & \ssz \dxkm1]_i [z^{k + 1} + \ssz \dskm1]_i \ge \\
      & \ge (1 - \ssz) \gamma \mu_{k + 1} - \lvert [\sszk \dxk + \ssz
      \dxkm1]_i [\sszk \dxk + \ssz \dskm1]_i \rvert - (1 + \ssz \lvert
      \gamma_1 \rvert) \sszk^2 \lvert [\dxk]_i [\dsk]_i \rvert + \ssz
      \sigma_{k + 1} \mu_{k + 1} \\
      & \ge (1 - \ssz) \gamma \mu_{k + 1} + \ssz \sigma_{k + 1} \mu_{k
        + 1} - C_6^2 n^5 \mu_k \sszk^2 - 2 \sszk^2 \omega^2 n^{5/2}
      \mu_k \\
      & \ge (1 - \ssz) \gamma \mu_{k + 1} + \ssz \sigma_{k + 1} \mu_{k
        + 1} - \kappa n^5 \sszk^2 \mu_k
    \end{split}
  \end{equation*}
  and from~\eqref{CP-afterQN}, using~\eqref{t:inf:c:e3}--\eqref{t:inf:c:e4}
  we get 
  \begin{equation*}
    \begin{split}
      \gamma \mu(\ssz) & = \gamma \frac{(x^{k + 1} + \ssz \dxkm1)^T
        (z^{k + 1} + \ssz \dskm1)}{n} \le \\
      & \le \gamma \left[ (1 - \ssz (1 - \sigma_{k + 1}) ) \mu_{k + 1}
        + \frac{1 + \ssz \lvert \gamma_1 \rvert}{n} \sszk^2 \lvert
        {\dxk}^T \dsk \rvert + \frac{\lvert (\sszk \dxk + \ssz
          \dxkm1)^T (\sszk \dsk + \ssz \dskm1) \rvert }{n}
      \right] \\
      & \le (1 - \ssz) \gamma \mu_{k + 1} + \ssz \sigma_{k + 1} \gamma
      \mu_{k + 1} + (1 + n^{1/2}) \gamma \omega^2 n \sszk^2 \mu_k +
      C_6^2 \gamma n^4  \sszk^2 \mu_k \\
      & \le (1 - \ssz) \gamma \mu_{k + 1} + \ssz \sigma_{k + 1} \gamma
      \mu_{k + 1} + \kappa \gamma n^4 \sszk^2 \mu_k.
    \end{split}
  \end{equation*}
  By combining the above two inequalities, then
  using~\eqref{t:inf:c:e5} and~\eqref{l:inf:sym:e0} and
  $\ssz \le C_3^{-1} \sszk$, we obtain
  \begin{equation*}
    \begin{split}
      [x^{k + 1} + \ssz \dxkm1]_i [z^{k + 1} + \ssz \dskm1]_i - \gamma
      \mu(\ssz) & \ge (1 - \gamma) \ssz \sigma_{k + 1} \mu_{k + 1} -
      (1 + \gamma) \kappa n^5 \sszk^2 \mu_k \\
      & \ge (1 - \gamma)
      \sigma_{min} \ssz (1 - \sszk) \mu_k - (1 +
      \gamma) \kappa n^5 \sszk^2 \mu_k \\
      & \ge \left[ \sigma_{min} \ssz - \left( C_3^{-1} \sigma_{min} +
          \left( \frac{1 + \gamma}{1 - \gamma} \right) \kappa \right)
        n^5 \sszk^2 \right] (1 - \gamma) \mu_k.
    \end{split}
  \end{equation*}
  The right-hand side of this inequality is non-negative, and
  hence~\eqref{t:inf:complex:e3} holds, if
  $\ssz \ge (C_3 C_5)^{-1} n^5 \sszk^2$ and $\sszk \le C_5 / n^5$,
  with
  \[
    C_5 = C_3^{-1} \sigma_{min} \left(C_3^{-1} \sigma_{min} + \left(
        \frac{1 + \gamma}{1 - \gamma} \right) \kappa \right)^{-1}.
  \]
  Finally, in order to show~\eqref{t:inf:complex:e4}, we use the same
  ideas to obtain the following two inequalities
  \[
    [x^{k + 1} + \ssz \dxkm1]_i [z^{k + 1} + \ssz \dskm1]_i 
    \le \frac{1 - \ssz}{\gamma} \mu_{k + 1} 
        + \ssz \sigma_{k + 1} \mu_{k + 1} + \kappa n^5 \sszk^2 \mu_k
  \]
  and
  \[
    \frac{1}{\gamma} \mu(\ssz) \ge \frac{1 - \ssz}{\gamma} \mu_{k + 1}
    + \frac{\ssz \sigma_{k + 1}}{\gamma} \mu_{k + 1} 
    - \frac{\kappa n^4 \sszk^2}{\gamma} \mu_k.
  \]
  By combining these two inequalities and 
  using~\eqref{l:inf:sym:e0}
  and $\ssz \le C_3^{-1} \sszk$ once more, we have that
  \begin{equation*}
    \begin{split}
      [x^{k + 1} + \ssz \dxkm1]_i [z^{k + 1} + \ssz \dskm1]_i - \frac{1}{\gamma} \mu(\ssz) 
      & \le \left(1 - \frac{1}{\gamma} \right) \sigma_{k + 1} \ssz \mu_{k + 1} 
            + \left( 1 + \frac{1}{\gamma} \right) \kappa n^5 \sszk^2 \mu_k \\
      & = - \left( \frac{1 - \gamma}{\gamma} \right) \sigma_{k + 1} \ssz \mu_{k + 1} 
          + \left( \frac{1 + \gamma}{\gamma} \right) \kappa n^5 \sszk^2 \mu_k \\
      & \le - \left( \frac{1 - \gamma}{\gamma} \right) \sigma_{min} \ssz (1 - \sszk) \mu_k 
          + \left( \frac{1 + \gamma}{\gamma} \right) \kappa n^5 \sszk^2 \mu_k \\
      & \le \left[ - \sigma_{min} \ssz 
          + \left( C_3^{-1} \sigma_{min} 
              + \left( \frac{1 + \gamma}{1 - \gamma} \right) \kappa \right) n^5 \sszk^2 
            \right] \left( \frac{1 - \gamma}{\gamma} \right) \mu_k.
    \end{split}
  \end{equation*}
  Again, the right-hand side of this inequality is non-positive if
  $\ssz \ge (C_3 C_5)^{-1} n^5 \sszk^2$ and $\sszk \le C_5 / n^5$,
  with $C_5$ defined before.
  Since $C_5 \leq C_4$, we observe that
  $(C_3 C_5)^{-1} n^5 \ge (C_3 C_4)^{-1} n^5$,
  and this explains the lower bound on $\ssz$. 
  We also observe that in~\eqref{t:inf:complex:lims}, 
  $\sszk$ is bounded from above by $C_5/n^5$ 
  hence $(C_3 C_5)^{-1} n^5 \sszk^2 \leq C_3^{-1} \sszk$, 
  which guarantees that the interval for $\ssz$ is not empty. 
  Similar observations can be made to justify the upper bound 
  of $\sszk$ in~\eqref{t:inf:complex:lims} and this concludes 
  the proof. 
\end{proof}
  To further show that $\xysk{k + 2}$ belongs to the
  $\nN_s(\gamma, \beta)$ neighborhood, and therefore
  satisfies~\eqref{t:inf:sufdec}, we need to ensure that $\gamma$ is not
  too close to 0.
\begin{theorem} \label{t:inf:complex2} %
  Let the hypotheses of Theorem~\ref{t:inf:complex} hold. If, in
  addition, we request that
  \begin{equation} \label{t:inf:c2:bnds} %
    \gamma \ge 2 \left( - 8 \beta + \sqrt{(8 \beta + 2)^2 + \frac{4}{3
          \sigma_{min}}} \right)^{-1}
  \end{equation}
  then $\xysk{k + 2}$ satisfies~\eqref{t:inf:sufdec}.
\end{theorem}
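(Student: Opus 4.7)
The plan is to verify separately the two requirements of~\eqref{t:inf:sufdec} at the new iterate $\xysk{k+2}$: the sufficient decrease $\mu(\ssz) \le (1 - \alphadec \ssz)\mu_{k+1}$, and the neighborhood membership $\xysk{k+2} \in \nN_s(\gamma,\beta)$. The sufficient decrease follows immediately from~\eqref{t:inf:complex:e2} after dividing by $n$, so the work concentrates on neighborhood membership, which in turn breaks into (i) the complementarity bounds $\gamma \mu(\ssz) \le x^{k+2}_i z^{k+2}_i \le \mu(\ssz)/\gamma$; (ii) the infeasibility bound $\|(r_b^{k+2}, r_c^{k+2})\| \le \beta \|(r_b^0, r_c^0)\| \mu(\ssz)/\mu_0$; and (iii) strict positivity $x^{k+2}, z^{k+2} > 0$.

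Ingredient (i) is delivered directly by~\eqref{t:inf:complex:e3}--\eqref{t:inf:complex:e4}. For (ii), I would invoke Lemma~\ref{l:bbroyd} with $\ell = 1$: since the extra term it introduces only affects the third block row, the quasi-Newton direction solves the first two (linear) block rows of~\eqref{def:qnsys} exactly, giving $A\dxkm1 = -r_b^{k+1}$ and $A^T \dykm1 + \dskm1 = -r_c^{k+1}$. Direct substitution then yields $r_b^{k+2} = (1-\ssz) r_b^{k+1}$ and $r_c^{k+2} = (1-\ssz) r_c^{k+1}$. Chaining this with the assumption $\xysk{k+1} \in \nN_s(\gamma,\beta)$ and the lower bound $\mu(\ssz) \ge (1-\ssz)\mu_{k+1}$ (obtained by dividing~\eqref{t:inf:complex:e1} by $n$) produces the required infeasibility bound.

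The main obstacle is (iii), and I would mimic the contradiction arguments used at the ends of Theorems~\ref{t:n2conv} and~\ref{t:nsconv}. Suppose, toward contradiction, that $x^{k+2}_i \le 0$ or $z^{k+2}_i \le 0$ for some $i$. The strict inequality $x^{k+2}_i z^{k+2}_i \ge \gamma \mu(\ssz) > 0$ from (i) forces both $x^{k+2}_i < 0$ and $z^{k+2}_i < 0$. Together with $x^{k+1}_i, z^{k+1}_i > 0$ and the decomposition $x^{k+2} = x^k + \sszk \dxk + \ssz \dxkm1$ (and analogously for $z$), this yields the strict inequalities $\sszk \dxk_i + \ssz \dxkm1_i < -x^k_i < 0$ and $\sszk \dsk_i + \ssz \dskm1_i < -z^k_i < 0$. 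Multiplying and using $\xysk{k} \in \nN_s(\gamma,\beta)$ gives $(\sszk\dxk + \ssz\dxkm1)_i(\sszk\dsk + \ssz\dskm1)_i > x^k_i z^k_i \ge \gamma \mu_k$. On the other hand, factoring through $D^k$ and applying Corollary~\ref{c:inf:dxbnd} bound the same component-wise product from above by $C_6^2 \sszk^2 n^5 \mu_k$. Using $\sszk \le C_5/n^5$ from~\eqref{t:inf:complex:lims} then yields $\gamma < C_6^2 C_5^2/n^5 \le C_6^2 C_5^2$ for $n \ge 1$.

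The remaining technical work is to unwind the definitions of $\omega$, $C_3$, $C_5$, $C_6$, and verify that the hypothesis~\eqref{t:inf:c2:bnds} — the positive root of a quadratic in $1/\gamma$ involving $\beta$ and $\sigma_{min}$ — is precisely the threshold above which the just-derived inequality $\gamma < C_6^2 C_5^2$ becomes absurd. I expect this bookkeeping of $\gamma$-dependent constants to be the tedious step of the argument, with everything else following cleanly from the results already established in the paper.
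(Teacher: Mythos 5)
Your proposal is correct and follows essentially the same route as the paper: sufficient decrease from~\eqref{t:inf:complex:e2}, complementarity bounds from~\eqref{t:inf:complex:e3}--\eqref{t:inf:complex:e4}, the residual bound via $r^{k+2} = (1-\ssz)r^{k+1}$ together with $\mu_{k+2} \ge (1-\ssz)\mu_{k+1}$, and positivity by the same contradiction leading to $\gamma < C_6^2 n^5 \sszk^2 \le (C_5 C_6)^2/n^5$. The bookkeeping you defer is exactly what the paper does — it bounds $C_6 \le 3(1+\gamma^{-1}+8\beta)\gamma^{-1/2}$ and $C_5 \le \sigma_{min}(1-\gamma)$, drops the squares, and solves the resulting quadratic in $1/\gamma$ to recover the threshold~\eqref{t:inf:c2:bnds}.
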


\begin{proof}
  Using inequality~\eqref{t:inf:complex:e1} in
  Theorem~\ref{t:inf:complex}
  \[
    \frac{\| (r^{k + 2}_b, r^{k + 2}_c) \|}{\mu_{k + 2}} \le \frac{(1
      - \ssz) \| (r^{k + 1}_b, r^{k + 1}_c) \|}{(1 - \ssz) \mu_{k +
        1}} \le \frac{\| (r^0_b, r^0_c) \|}{\mu_0} \beta
  \]
  and, by~\eqref{t:inf:complex:e3} and~\eqref{t:inf:complex:e4}
  \[
    0 <   \gamma \mu_{k + 2} 
    \le x^{k + 2}_i z^{k + 2}_i 
    \le \frac{1}{\gamma} \mu_{k + 2}. 
  \]
  To show that $x^{k + 2} > 0$ and $z^{k + 2} > 0$, we suppose by
  contradiction that $x^{k + 2}_i \le 0$ or $z^{k + 2}_i \le 0$ occurs
  for some $i$ and follow the same arguments as those used in
  Theorems~\ref{t:n2conv} and~\ref{t:nsconv} to conclude,
  using inequality~\eqref{t:inf:c:e6}, that $\gamma < C_6^2 n^5 \sszk^2$ 
  should hold.  

  By using the fact that $C_3 \ge 1$, defined in
  Lemma~\ref{l:inf:gamma1bnd}, and $\kappa \ge 1$ and $C_6 \ge 1$,
  defined in Theorem~\ref{t:inf:complex}, and basic manipulation, we
  conclude that $C_6 \le 3 (1 + \gamma^{-1} + 8 \beta) \gamma^{-1/2}$ and
  $C_5 \le \sigma_{min} (1 - \gamma)$, where $C_5$ was also defined in
  Theorem~\ref{t:inf:complex}. Then, using \eqref{t:inf:complex:lims}
  for $\sszk$ we obtain
  \begin{equation} \label{t:inf:c2:gamma}
    \gamma < C_6^2 n^5 \sszk^2 \le \frac{(C_5 C_6)^2}{n^5} 
      \le \frac{9 (1 + \gamma^{-1} + 8 \beta)^2 (1 - \gamma)^2}{\gamma} \sigma_{min}^2
  \end{equation} 
  which implies that
  $ \gamma < 2 \left(- 8 \beta + \sqrt{(8\beta + 2)^2 
             + \frac{4}{3 \sigma_{min}}} \right)^{-1}$ 
  and contradicts~\eqref{t:inf:c2:bnds}.
  The above inequality has been obtained by
  rearranging~\eqref{t:inf:c2:gamma} namely, dropping the squares and
  solving the resulting (quadratic) inequality with an unknown
  $1/\gamma$.
  Therefore, we conclude that $\xysk{k + 2} \in \nN_s(\gamma, \beta)$.
  Earlier proved inequality \eqref{t:inf:complex:e2} guarantees 
  that iteration $k + 1$ satisfies~\eqref{t:inf:sufdec}.

\end{proof}

%
%

For sufficiently large $n$, we clearly have 
\[
  \min \left\{ 1, \left((1 + C_3) \omega^2 \right)^{-1}, \frac{C_5}{n^5} \right\} 
     = \frac{C_5}{n^5} 
     < \frac{\bar {\bar \delta}}{n^2},
\]
where $\bar {\bar \delta}$ comes from Lemma~\ref{l:inf:symmetric} 
and this guarantees that the hypotheses of this section hold. 
By~\eqref{t:inf:complex:lims} we have that both $\sszk$ and
$\ssz$ are greater than or equal to $\frac{C_3^{-1}C_5}{n^5}$ 
hence
\[
  \mu_{k + 1} 
  \le (1 - \alphadec \sszk) \mu_k 
  \le \left( 1 - \frac{\alphadec C_3^{-1}C_5}{n^5} \right) \mu_k, 
\]
for all $k = 0, 1, \dots$, which yields $O(n^5)$ worst-case iteration 
complexity for the infeasible case of Algorithm~\ref{alg:pdqnipm}.

Unlike in the feasible case, because of the close relation of the
symmetric neighborhood and the $\nN_{-\infty}(\gamma,\beta)$
neighborhood, we expect that a similar complexity result to that of
Theorem~\ref{t:inf:complex} should hold for the algorithm which
operates in $\nN_{-\infty}(\gamma,\beta)$. However, to save space we
do not derive it here.

%
%
%

\section{Conclusions and final observations} \label{remarks}

This work provided theoretical tools to analyze the worst-case
iteration complexity of quasi-Newton primal-dual interior point
algorithms. A simplified algorithm was considered, which consisted of
alternating Newton and quasi-Newton steps. The quasi-Newton approach
was based on the Broyden ``bad'' low-rank update of the inverse of the
unreduced system. Feasible and infeasible algorithms and well
established neighborhoods of the central path have been considered.

The results showed that in all cases, the degree of the polynomial 
in the worst-case result has increased. This behavior has already 
been observed by~\cite{GondzioSobral2019}, where the number 
of overall iterations increased, but the number of factorizations 
of the true unreduced system actually decreased.

Complexity results might also be obtained in the feasible case 
for convex quadratic programming problems and the $\nN_2(\theta)$
neighborhood, following~\cite{Gondzio2012a}, for example. However,
since Lemma~\ref{l:ortho} does not hold anymore and thus $\gamma_1$ 
is not eliminated from $\mu(\ssz)$, a further study needs to be 
carried out.

Another interesting and complicated question is the case $\ell > 1$,
where we allow more than one quasi-Newton step after a Newton
step. The authors in~\cite{GondzioSobral2019} observed a trade-off
between the number of consecutive quasi-Newton iterations and 
the overall speed of convergence of the algorithm. Numerical 
results showed that $\ell = 5$ is a reasonable choice. 
We expect that similar worst-case complexity bounds should 
be obtained for the case $\ell > 1$, with possibly higher 
degrees of polynomials, but this still remains an open question.

The worst-case complexity results obtained when the $\nN_s$ 
neighborhood was considered in both feasible and infeasible cases 
seem rather pessimistic. The high polynomial degrees originate 
from Lemma~\ref{l:ns:qnstepbnd} for the feasible and 
from Lemma~\ref{l:inf:dxbnd} for the infeasible cases. 
Finding new ways of reducing the degrees of such expressions 
would definitely result in better complexity results, since 
all the other terms are at most of order $n$.

\bibliographystyle{jabbrv_siam}
\bibliography{gs2}

\newpage

\end{document}